\theoremstyle{plain}
    \newtheorem{thm}{Theorem}[section]
    \newtheorem{corollary}[thm]{Corollary}
    \newtheorem{lemma}[thm]{Lemma}
    \newtheorem{proposition}[thm]{Proposition}
    \newtheorem{question}[thm]{Question}
    \newtheorem{theorem}[thm]{Theorem}
\theoremstyle{definition}
    \newtheorem{remark}[thm]{Remark}
\theoremstyle{remark}
    \newtheorem{setup}[thm]{}
\newcommand{\BCC}{\mathbb{C}}
\newcommand{\Q}{\mathbb{Q}}
\newcommand{\R}{\mathbb{R}}
\newcommand{\Z}{\mathbb{Z}}
\newcommand{\alb}{\operatorname{alb}}
\newcommand{\Aut}{\operatorname{Aut}}
\newcommand{\Gal}{\operatorname{Gal}}
\newcommand{\id}{\operatorname{id}}
\newcommand{\Ker}{\operatorname{Ker}}
\newcommand{\NS}{\operatorname{NS}}
\newcommand{\PEC}{\operatorname{PEC}}
\newcommand{\Stab}{\operatorname{Stab}}
\newcommand{\Sing}{\operatorname{Sing}}
\newcommand{\Supp}{\operatorname{Supp}}
\newcommand{\Nlc}{\operatorname{Nlc}}
\newcommand{\Nklt}{\operatorname{Nklt}}
\newcommand{\sep}{\operatorname{sep}}
\newcommand{\Codim}{\operatorname{codim}}
\newcommand{\N}{\operatorname{N}}
\newcommand{\Alb}{\operatorname{Alb}}
\newcommand{\SSpec}{\operatorname{\mathit{Spec}}}
\newcommand{\Pic}{\operatorname{Pic}}
\newcommand{\red}{\mathrm{red}}
\newcommand{\reg}{\mathrm{reg}}
\begin{document}

\title[Polarized endomorphisms in arbitrary characteristic]
{Polarized endomorphisms of normal projective threefolds in arbitrary characteristic
}

\author{Paolo Cascini, Sheng Meng, and De-Qi Zhang}

\address
{
\textsc{Department of Mathematics} \endgraf
\textsc{Imperial College London, London SW72A2, United Kingdom
}}
\email{p.cascini@imperial.ac.uk}
\address
{
\textsc{Department of Mathematics} \endgraf
\textsc{National University of Singapore,
Singapore 119076, Republic of Singapore
}}
\email{ms@u.nus.edu}
\address
{
\textsc{Department of Mathematics} \endgraf
\textsc{National University of Singapore,
Singapore 119076, Republic of Singapore
}}
\email{matzdq@nus.edu.sg}

\begin{abstract}
Let $X$ be a projective variety over an algebraically closed field $k$ of arbitrary characteristic $p \ge 0$.
A surjective endomorphism $f$ of $X$ is $q$-polarized if
$f^\ast H \sim qH$ for some ample Cartier divisor $H$ and integer $q > 1$.

Suppose $f$ is separable and $X$ is $\Q$-Gorenstein and normal.
We show that the anti-canonical divisor $-K_X$ is numerically equivalent to an effective
$\Q$-Cartier
divisor,
strengthening slightly the conclusion of Boucksom, de Fernex and Favre \cite[Theorem C]{BFF}
and also covering singular varieties over an algebraically closed field of arbitrary characteristic.

Suppose $f$ is separable and $X$ is normal. We show that the Albanese morphism of $X$ is an algebraic fibre space
and $f$ induces polarized endomorphisms
on the Albanese and also the Picard variety of $X$, and $K_X$ being pseudo-effective and
$\Q$-Cartier means being a torsion $\Q$-divisor.

Let $f^{\Gal}:\overline{X}\to X$ be the Galois closure of $f$.
We show that if $p>5$ and co-prime to $\deg f^{\Gal}$ then one can run the minimal model program (MMP) $f$-equivariantly, after replacing $f$ by a positive power, for a mildly singular threefold $X$
and reach a variety $Y$ with torsion canonical divisor
(and also with $Y$ being a quasi-\'etale quotient of an abelian variety when $\dim(Y)\le 2$).
Along the way, we show that
a power of $f$ acts as a scalar multiplication on the Neron-Severi group of $X$ (modulo torsion)
when $X$ is a smooth and rationally chain connected projective variety of dimension at most three.
\end{abstract}

\subjclass[2010]{
14H30, 
32H50, 
14E30,   
11G10, 
08A35.  
}

\keywords{polarized endomorphism, iteration, equivariant MMP in positive characteristic, $Q$-abelian variety, Albanese map}

\maketitle
\tableofcontents

\section{Introduction}
We work over a fixed algebraically closed field $k$ of arbitrary characteristic $p \ge 0$.

Let $X$ be a projective variety over the field $k$ and $f:X\to X$ a surjective endomorphism.
We say that $f$ is {\it $q$-polarized}  (resp. {\it numerically $q$-polarized}) by $H$,
if there is an ample Cartier divisor $H$ such that $f^{\ast}H \sim qH$, a linear equivalence
(resp. $f^{\ast}H \equiv qH$, a numerical equivalence)
for some integer $q>1$.
When $char\, k=0$, being numerically polarized is equivalent to being polarized after replacing
$H$ (cf.~\cite[Lemma 2.3]{Na-Zh}).  In Section \ref{sec-app}, Theorem \ref{thm-num-lin},
we generalize this result to arbitrary characteristic if we further assume $X$ is normal and $f$ is separable.

Let $X$ be a $\Q$-Gorenstein normal projective variety over the field $k$
admitting a polarized separable endomorphism.
Boucksom, de Fernex and Favre \cite[Theorem C]{BFF} showed that if $X$ is further smooth and $char \, k = 0$, then $-K_X$ is pseudo-effective.
Our first Theorem \ref{thm-pe} below
strengthens a bit their conclusion
and also covers singular varieties over the field $k$ of arbitrary characteristic.

\begin{theorem}\label{thm-pe}
Let $f: X \to X$ be a polarized separable endomorphism of a $\Q$-Gorenstein normal projective variety $X$
of dimension $n \ge 0$ over the field $k$ of arbitrary characteristic.
Then
$-K_X$ is numerically equivalent to an effective
$\Q$-Cartier divisor.
In particular, if $\Alb(X)$ is trivial (cf.~Section \ref{sec-app}),
then we have the Iitaka $D$-dimension $\kappa(X,-K_X)\ge 0$.
\end{theorem}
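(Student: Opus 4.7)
Set $L:=-K_X$. The plan is to iterate the ramification formula along $f$ and exploit the spectral properties of $f^{\ast}$ acting on $\N^{1}(X)_{\R}$ in order to realize $L$ itself (not merely as a limit of effective classes) as an effective $\Q$-Cartier divisor modulo numerical equivalence. Since $f$ is separable and $X$ is $\Q$-Gorenstein normal, the ramification formula yields $K_X\sim_{\Q}f^{\ast}K_X+R_f$ with $R_f$ an effective $\Q$-Cartier divisor, and iterating gives
\[
(f^m)^{\ast}L\sim_{\Q}L+R_{f^m},\qquad R_{f^m}:=\sum_{i=0}^{m-1}(f^i)^{\ast}R_f,
\]
effective $\Q$-Cartier for every $m\geq 1$. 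Combining the projection formula with $f^{\ast}H\sim qH$ produces the fundamental identities $(f^{\ast}\alpha)\cdot H^{n-1}=q\,\alpha\cdot H^{n-1}$ and $(f^{\ast}\alpha)^2\cdot H^{n-2}=q^2\,\alpha^2\cdot H^{n-2}$ for any $\alpha\in \N^{1}(X)_{\R}$. Intersecting the iterated formula with $H^{n-1}$ gives $R_{f^m}\cdot H^{n-1}=(q^m-1)\,L\cdot H^{n-1}$, so in particular $L\cdot H^{n-1}\geq 0$ and the classes $R_{f^m}/q^m$ have uniformly bounded $H^{n-1}$-degree.

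The second identity, together with the Hodge-index theorem on $\N^{1}(X)_{\R}$ (valid in any characteristic for normal projective varieties, e.g.\ via resolution), forces every eigenvalue $\lambda\in\BCC$ of $f^{\ast}$ on $\N^{1}(X)_{\R}$ to satisfy $|\lambda|=q$; since $f^{\ast}$ preserves the lattice $\N^{1}(X)_{\Z}/\mathrm{tors}$, each such $\lambda$ is an algebraic integer whose Galois conjugates are themselves eigenvalues, and hence are also of absolute value $q$. The crux of the proof is then the assertion that some power $(f^m)^{\ast}$ acts as multiplication by $q^m$ on $L$ in $\N^{1}(X)_{\Q}$, i.e.\ $(f^m)^{\ast}L\equiv q^m L$; granted this, $R_{f^m}\equiv (q^m-1)L$ in $\N^{1}(X)_{\Q}$ and consequently $L\equiv \tfrac{1}{q^m-1}R_{f^m}$, an effective $\Q$-Cartier divisor, establishing the theorem.

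To prove the crux I would first show that $L$ lies in the direct sum of the \emph{genuine} eigenspaces of $f^{\ast}$, not merely the generalized ones. Strict positivity of the linear form $H^{n-1}$ on $\PE(X)\setminus\{0\}$ (a consequence of BDPP-type duality between pseudo-effective divisors and movable curves, which must be checked in the required generality) makes every slice $\PE(X)\cap\{\alpha\cdot H^{n-1}=c\}$ compact, so the uniform bound above forces $\{R_{f^m}/q^m\}_m$ to lie in a compact subset of $\N^{1}(X)_{\R}$; combined with $q^{-m}L\to 0$, this says $(f^{\ast}/q)^m L$ is bounded in $m$. A Jordan component of $L$ on a block of size $k\geq 2$ of $f^{\ast}/q$ (whose eigenvalue has modulus one) would force polynomial growth of order $m^{k-1}$, a contradiction; hence $L$ is a sum of honest eigenvectors. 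Choosing $m$ to be a common multiple of the orders of the roots of unity $\lambda/q$ appearing in this decomposition of $L$ then yields $(f^m)^{\ast}L\equiv q^m L$ as required. For the \emph{in particular} part, triviality of $\Alb(X)$ forces $\Pic^{\tau}(X)$ to be torsion, so numerical and $\Q$-linear equivalence coincide on $\Pic(X)\otimes\Q$; the above upgrades to $-K_X\sim_{\Q}\tfrac{1}{q^m-1}R_{f^m}$, which is an effective $\Q$-Cartier divisor, giving $\kappa(X,-K_X)\geq 0$. The main obstacle lies in the third step: confirming, in the generality of normal projective $X$ over a field of arbitrary characteristic, the positivity of $H^{n-1}$ on $\PE(X)\setminus\{0\}$, the Hodge-index theorem on $\N^{1}(X)_{\R}$, and—most delicately—the root-of-unity shape $\lambda/q$ of the eigenvalues of $f^{\ast}$ restricted to the $f^{\ast}$-invariant subspace containing $L$, which is automatic by Kronecker when $\lambda/q$ is known to be an algebraic integer but needs a separate verification in general.
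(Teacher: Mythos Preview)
Your proposal correctly sets up the iterated ramification formula $(f^m)^*(-K_X)\equiv -K_X+R_{f^m}$ and the fact that $f^*$ acts on $\N^1(X)_{\R}$ diagonalizably with all eigenvalues of modulus $q$ (this is Proposition~\ref{prop-fx-x}(1), i.e.\ \cite[Proposition 2.9]{MZ}). But the step you yourself flag as ``most delicate'' is a genuine, unfilled gap: there is no reason for the ratios $\lambda/q$ to be roots of unity. The eigenvalue $\lambda$ is an algebraic integer, but $\lambda/q$ need not be one---for instance an integer matrix with characteristic polynomial $x^2-x+q^2$ has complex eigenvalues of modulus $q$, yet $\lambda/q$ satisfies $\mu^2-\mu/q+1=0$ and is not an algebraic integer for $q\ge 2$, hence not a root of unity by Kronecker. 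Nothing in your argument restricts which eigenspaces contribute to $L=-K_X$, so the crucial assertion $(f^m)^*L\equiv q^m L$ remains unjustified. What your outline does establish (via the boundedness of $R_{f^m}/q^m$) is only that $-K_X\in\PEC(X)$, i.e.\ pseudo-effectivity---precisely the conclusion of \cite{BFF} that the theorem is meant to strengthen.

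The paper avoids this issue entirely by a direct cone-theoretic argument (Proposition~\ref{prop-fx-x}(2)). Write $v=[-K_X]$ and $e=f^*v-v=[R_f]$, and let $C\subseteq\N^1(X)$ be the cone of effective classes. Since $\varphi^{-n}v\to 0$ one has $v=\lim_m s_m$ with $s_m=\sum_{i=1}^m\varphi^{-i}e$. Rather than passing to the limit (which only lands in $\overline{C}$), one observes that the convex cone $E\subseteq C$ generated by $\{\varphi^{-i}e\}_{i\in\Z}$ is already spanned by finitely many $\varphi^{-1}e,\dots,\varphi^{-m}e$; hence $s_m$ lies in the \emph{relative interior} $E^\circ$, while the tail $\lim_n\sum_{i>m}\varphi^{-i}e$ lies in $\overline{E}$. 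Therefore $v=s_m+(\text{tail})\in E^\circ\subseteq C$, giving numerical effectivity directly. This uses only $|\lambda|=q$ and diagonalizability, never the arithmetic of $\lambda/q$.
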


Our next Theorem \ref{ap-cor-dual} below affirmatively answers Krieger - Reschke \cite[Question 1.10]{Kr} and generalizes \cite[Corollary 1.4]{MZ} to all characteristics, the proof of which is very different, due to the lack
of practical characterizations of abelian varieties in positive characteristics.
It is also used in the proof of Theorems \ref{thm-torsion} and \ref{scalarthm}.

\begin{theorem}\label{ap-cor-dual} Let $f:X\to X$ be a numerically $q$-polarized separable endomorphism of a normal projective variety $X$. Then we have the following.
\begin{itemize}
\item[(1)]
The Albanese morphism (cf.~Section \ref{sec-app}) $\alb_X:X\to \Alb(X)$ is surjective with $(\alb_X)_\ast \mathcal{O}_X=\mathcal{O}_{\Alb(X)}$ and all the fibres of $\alb_X$ are irreducible and equi-dimensional. The induced morphism $g:\Alb(X)\to \Alb(X)$ is $q$-polarized separable.
\item[(2)]
The Albanese map (cf.~Section \ref{sec-app}) $\mathfrak{alb}_X:X\dashrightarrow \mathfrak{Alb}(X)$ is dominant and the induced morphism $h:\mathfrak{Alb}(X)\to \mathfrak{Alb}(X)$ is $q$-polarized separable.
\item[(3)]
The dual $\widehat{f}:\Pic^0(X)_{\red} \to \Pic^0(X)_{\red}$ is numerically $q$-polarized.
\end{itemize}
\end{theorem}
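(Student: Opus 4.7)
\emph{Proof proposal.} The plan is to construct the three induced endomorphisms $g$, $h$, $\widehat{f}$ via the universal properties of $\Alb(X)$, $\mathfrak{Alb}(X)$, and $\Pic^0(X)_{\red}$, then to transport the polarization $f^*H\equiv qH$ to each, and finally to upgrade dominance into surjectivity with connected equi-dimensional fibres for $\alb_X$ in part (1).

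I would handle part (3) first. Since $f^*$ preserves numerical equivalence, it preserves numerical triviality and hence restricts to an endomorphism $\widehat{f}$ of the abelian variety $\Pic^0(X)_{\red}$. To produce an ample $M$ on $\Pic^0(X)_{\red}$ with $\widehat{f}^*M\equiv qM$, use the action of $f^*$ on $N^1(X)_\R$, where $[H]$ spans a one-dimensional $q$-eigenline, together with the natural injection $\NS(\mathfrak{Alb}(X))\hookrightarrow \NS(X)$ coming from the Albanese map and the duality between polarizations on $\Pic^0(X)_{\red}$ and polarizations on its dual $\mathfrak{Alb}(X)$; an ample eigenvector in the $q$-eigenspace of the dual endomorphism then arises from averaging or from direct Perron--Frobenius-type considerations (together with Theorem \ref{thm-num-lin} to pass freely between numerical and linear equivalence).

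For parts (1) and (2), the functorial construction of $g$ and $h$ is immediate from the universal properties; after a translation we take $g$ and $h$ to be group endomorphisms. The canonical morphism $\Alb(X)\to \mathfrak{Alb}(X)$ intertwines $g$ and $h$, so the numerical $q$-polarization of $h$ (which is dual to (3)) pulls back to a numerical $q$-polarization of $g$. Now a numerically $q$-polarized endomorphism of an abelian variety is necessarily an isogeny: otherwise its kernel has positive dimension, but restricting the ample eigenvector $L$ with $g^*L\equiv qL$ to the kernel makes $L|_{\ker g}$ both ample and numerically trivial, a contradiction. Hence $g$ is surjective, and the image $Y:=\alb_X(X)\subset \Alb(X)$ is $g$-invariant and generates $\Alb(X)$ as a subgroup by universality of $\Alb(X)$. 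A dimension count using intersection numbers of $\alb_X^*L$ against powers of $H$ on $X$, combined with the polarized dynamics of $f^*$, forces $\dim Y=\dim \Alb(X)$, so $\alb_X$ is surjective. For connectedness and equi-dimensionality of fibres, take the Stein factorization $\alb_X=s\circ \pi$ with $\pi:X\to W$ having connected fibres and $s:W\to \Alb(X)$ finite; the endomorphism $f$ descends to compatible endomorphisms on $W$ and $\Alb(X)$, and since $\Alb(W)$ equals $\Alb(X)$ by functoriality, a degree argument comparing the polarized endomorphisms on $W$ and on $\Alb(X)$ forces $s$ to be an isomorphism, yielding $(\alb_X)_*\OO_X=\OO_{\Alb(X)}$. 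Equi-dimensionality of fibres is then immediate from homogeneity of $\Alb(X)$ under the translations compatible with the isogeny $g$.

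Separability of $g$ and $h$ follows from separability of $f$: once $\alb_X$ is surjective (resp.\ $\mathfrak{alb}_X$ dominant), the function field $k(\Alb(X))$ (resp.\ $k(\mathfrak{Alb}(X))$) embeds into $k(X)$, and the induced action is just the restriction of the separable extension $f^*:k(X)\hookrightarrow k(X)$. The main obstacle is the surjectivity-with-connected-equi-dimensional-fibres step in part (1): the characteristic-zero proof in Meng--Zhang \cite[Corollary 1.4]{MZ} relies on Hodge-theoretic tools, notably the identification $H^0(X,\Omega^1_X)\cong H^0(\Alb(X),\Omega^1)$ and positivity arguments on these, which fail in positive characteristic. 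The substitute is to work purely with $\Pic^0$, $N^1$, and intersection numbers, bootstrapping from the dual statement (3)---the most char-$p$-accessible input---back to (1).
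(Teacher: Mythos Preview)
Your outline has the right overall architecture---construct $g$, $h$, $\widehat f$ functorially, show polarization descends, Stein-factor $\alb_X$---but two of the key steps are genuine gaps.

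First, the claim that equi-dimensionality of the fibres of $\alb_X$ ``is then immediate from homogeneity of $\Alb(X)$'' is false. Translations on $\Alb(X)$ do not lift to $X$, so homogeneity of the target says nothing about the fibre dimensions; think of blowing up a point in $E\times\PP^1$ over an elliptic curve $E$. Likewise, Stein factorization gives connected fibres, not irreducible ones. The paper's argument is dynamical, not geometric: the locus $\Sigma\subset\Alb(X)$ over which fibres are reducible (Lemma~\ref{prop-irr}) or have jumping dimension (Lemma~\ref{fibres-rc+irr}) is shown to satisfy $g^{-1}(\Sigma)\subseteq\Sigma$, hence $g^{-1}(\overline\Sigma)=\overline\Sigma$; then one invokes that a polarized separable endomorphism of a $Q$-abelian variety has no $f^{-1}$-periodic proper subvariety (Corollary~\ref{cor-qab}, ultimately resting on Pink--Roessler), forcing $\Sigma=\emptyset$.

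Second, the Stein factorization step $X\to W\to\Alb(X)$ cannot be closed by ``a degree argument''. In positive characteristic a finite map $s:W\to\Alb(X)$ with $\Alb(W)=\Alb(X)$ need not be an isomorphism: it could be purely inseparable. The paper handles this (Lemma~\ref{app-lem-finite-abe}) by first splitting $s$ into a purely inseparable part and a separable part, using the ramification divisor formula plus Lemma~\ref{app-lem-k-eff} to force the separable part to be \'etale (hence trivial by universality), and then invoking \cite{HPZ} to kill the purely inseparable part. Your ``dimension count'' for surjectivity of $\alb_X$ is similarly too vague; the paper instead uses Lemma~\ref{ap-lem-gen}, again relying on the Pink--Roessler stabilizer result. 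Finally, your route for (3) via an injection $\NS(\mathfrak{Alb}(X))\hookrightarrow\NS(X)$ presupposes dominance of $\mathfrak{alb}_X$, which is part of what you are proving; the paper avoids circularity by first establishing surjectivity of $\alb_X$ and polarization of $g$ directly, and then deducing (3) from the purely abelian-variety computation in Lemma~\ref{ap-lem-dual} using the map $\phi_L$ and the norm criterion of Proposition~\ref{prop-fx-x}.
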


\begin{question}\label{question1}
Let $f:X\to X$ be a polarized separable endomorphism of a normal projective variety $X$. Does $K_X$ being pseudo-effective imply $X$ being $Q$-abelian?
\end{question}

Question \ref{question1} above was just \cite[Conjecture 1.2]{Na-Zh}. In positive characteristic 2 or 3,
there are examples $X$, due to L. Moret-Bailly, such that each $X$ is a smooth projective surface or threefold
with all $\ell$-adic Chern classes $c_i(X)$ vanishing but with $X$ not being a $Q$-abelian variety; we
do not know whether these $X$ admit separable polarized endomorphisms or not;
see \cite[\S 7.3]{Lan} for details and references.

Question \ref{question1} has a positive answer when $\dim(X) = 2$, char $k > 5$ and $p\nmid\deg f^{\Gal}$ (cf. Theorem \ref{thm-Q-surf}), or when $X$ is a klt projective variety over $\BCC$, by first showing the vanishing of the Chern classes
$c_i(X)$, $i = 1, 2$, and then appealing to a result generalizing Yau's characterization of $Q$-abelian varieties
in terms of the vanishing of the first two Chern classes
(cf.~Lemma \ref{lem-pe-qe}, Theorem \ref{thm-torsion}, \cite[Theorem 3.4]{Na-Zh},
\cite[Theorem 1.21]{GKP} and \cite[Lemma 4.6]{MZ}).
Such characterization does not hold in positive characteristic (cf. \cite{Lan}).

We are not able to answer Question \ref{question1} in higher dimension. But we have:

\begin{theorem}\label{thm-torsion}
Let $f:X\to X$ be a numerically polarized separable endomorphism of a normal projective variety $X$ with $K_X$ being pseudo-effective and $\Q$-Cartier. Then $f$ is quasi-\'etale and $K_X\sim_{\Q}0$.
\end{theorem}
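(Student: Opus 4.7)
The plan is to first establish $K_X \equiv 0$, then deduce that $f$ is quasi-\'etale, and finally to promote numerical triviality to $\Q$-linear triviality. Theorem \ref{thm-pe} (together with Theorem \ref{thm-num-lin}, which lets us replace the numerical polarization by an honest linear one after changing $H$) supplies an effective $\Q$-Cartier divisor $D$ with $D \equiv -K_X$, so both $D$ and $-D \equiv K_X$ are pseudo-effective. Since the pseudo-effective cone of a projective variety is strictly convex (it pairs nonnegatively with movable curve classes, which span $N_1(X)_\R$ by BDPP), we obtain $D \equiv 0$ and hence $K_X \equiv 0$. The ramification formula $K_X = f^*K_X + R_f$, valid because $f$ is separable and $X$ is normal and $\Q$-Gorenstein, with $R_f$ effective and $\Q$-Cartier, now yields $R_f \equiv 0$; pairing with $H^{n-1}$ for $H$ any ample Cartier divisor forces the effective divisor $R_f$ to vanish. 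Thus $f$ is quasi-\'etale and $f^*K_X \sim K_X$.

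For $K_X \sim_{\Q} 0$, the plan is to work on the Picard variety. Since $K_X \equiv 0$, the class $[K_X]$ lies in $\Pic^\tau(X)$; after multiplying by the finite index $N := [\Pic^\tau(X) : \Pic^0(X)_{\red}]$, the class $[NK_X]$ is a $k$-point of the abelian variety $A := \Pic^0(X)_{\red}$, and the identity $f^*K_X \sim K_X$ from the previous step says it is fixed by the induced map $\widehat{f}$. Theorem \ref{ap-cor-dual}(3) asserts that $\widehat{f}:A\to A$ is numerically $q$-polarized with $q > 1$. The step I expect to need the most care---especially in positive characteristic---is to conclude that such a $\widehat{f}$ admits only torsion fixed points on $A(k)$. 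I would proceed via the Rosati involution on $\operatorname{End}(A)\otimes \Q$: the polarization relation $\widehat{f}^*\hat{H} \equiv q\hat{H}$ translates to $\widehat{f}' \circ \widehat{f} = [q]$, and Rosati positivity combined with the Weil-type Riemann hypothesis for polarized endomorphisms of abelian varieties (valid in arbitrary characteristic) forces every eigenvalue of $\widehat{f}$ on $T_\ell A \otimes \overline{\Q}_\ell$ (for $\ell \neq p$) to have absolute value $\sqrt{q} > 1$. Consequently $\widehat{f} - \id_A$ is an isogeny whose kernel in $A(k)$ is finite, hence torsion. Therefore the fixed class $[NK_X]$ is torsion, so $MNK_X \sim 0$ for some $M > 0$, giving $K_X \sim_{\Q} 0$.
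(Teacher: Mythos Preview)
Your overall strategy matches the paper's: reduce to $f$ quasi-\'etale and $K_X\equiv 0$, then observe that a multiple of $K_X$ lands in $\Pic^0(X)_{\red}$ as a fixed point of the numerically $q$-polarized $\widehat f$ (Theorem~\ref{ap-cor-dual}(3)), and conclude it is torsion because $\widehat f-\id$ has finite kernel. Two points on the execution are worth noting.

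First, the detour through Theorem~\ref{thm-pe} and strict convexity of $\PEC(X)$ is unnecessary, and your justification of that convexity via BDPP is not available in positive characteristic (though since the $D$ produced by Theorem~\ref{thm-pe} is actually effective, $D\cdot H^{n-1}=0$ already gives $D=0$ without appealing to duality of cones). The paper's Lemma~\ref{lem-pe-qe} handles everything in one stroke: intersecting $K_X=f^*K_X+R_f$ with $H^{n-1}$ and using $f^*H\equiv qH$ yields $(q-1)K_X\cdot H^{n-1}+R_f\cdot H^{n-1}=0$; pseudo-effectivity of $K_X$ alone forces both terms to vanish, so $R_f=0$, and then $K_X=f^*K_X$ combined with Lemma~\ref{lem-eigen} gives $K_X\equiv 0$. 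You already carry out essentially this intersection computation in your second step, so the first step can simply be dropped.

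Second, your Rosati/Weil argument that $\widehat f-\id_A$ is an isogeny is correct but heavier than what the paper uses. Lemma~\ref{ap-lem-2} dispatches it elementarily: if $\widehat f-1_A$ failed to be surjective, its kernel would contain a positive-dimensional abelian subvariety $B_0$ on which $\widehat f$ restricts to the identity; but this restriction inherits the numerical $q$-polarization, forcing $q=1^2$, a contradiction. Either route gives a finite kernel and hence $K_X\sim_{\Q}0$.
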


With the above results, we are ready to run an $f$-equivariant minimal model program (MMP)
over the field $k$ of arbitrary characteristic.
We refer to \cite{KM} for the definitions of {\it canonical}, {\it klt} or {\it lc} singularities,
and \cite{Hara} for the definition of {\it strongly F-regular} singularity.
In the case of characteristic $0$,
one can run the MMP $f$-equivariantly for mildly singular $X$ in any dimension
and reach a $Q$-{\it abelian variety} (i.e.~ a quasi-\'etale quotient of an abelian variety; see \ref{nat}) or a {\it Fano variety} (i.e.~the anti-canonical divisor $-K_X$ is ample) of Picard number one
(cf.~\cite[Theorem 1.8]{MZ}).
In the case of positive characteristic, a surjective endomorphism might be ramified everywhere.
So we need to consider the separable endomorphisms in order to apply the ramification divisor formula which is crucial and automatically satisfied in the case of characteristic 0. Another problem in the case of positive characteristic is that the MMP is not fully established (only known for lc 3-fold with characteristic $>5$, cf.~\cite{Wal} and the references therein).
Moreover, after a Fano contraction, we want the base variety still to have mild singularities so that we can further run the MMP on the base variety; this is taken care of by Theorems
\ref{thm-Q-surf} and \ref{thm-w}.

In the case of characteristic $0$, Nakayama \cite[Section 7.3]{ENS}
showed that a normal projective surface $X$ having $K_X\sim_{\Q} 0$
and a quasi-\'etale non-isomorphic endomorphism $f$, is $Q$-abelian.
In particular, $X$ is klt and $\Q$-factorial.
In positive characteristic,
we are unable to show that $X$ is lc even when $f$ is separable, since $f$ may have wild ramification.
But when $p\nmid \deg f^{\Gal}$ and $p > 5$, we have the same result below.
Here for a separable finite surjective morphism $h: X_1 \to X_2$,
denote by $h^{\Gal}:\overline{X}_1\to X_2$ its {\it Galois closure}.

\begin{theorem}\label{thm-Q-surf}
Let $f:X\to X$ be a polarized endomorphism of
a normal projective surface $X$ over the field $k$ of characteristic $p>5$.
Suppose $p\nmid\deg f^{\Gal}$ and $K_X$ is pseudo-effective.
Then $X$ is a $Q$-abelian surface. In particular, $X$ is $\Q$-factorial and klt.
\end{theorem}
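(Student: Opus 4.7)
The plan is to apply Theorem \ref{thm-torsion} to reduce to the case $K_X\sim_{\Q}0$, then invoke Theorem \ref{ap-cor-dual} and split into three cases indexed by $d:=\dim\Alb(X)\in\{0,1,2\}$.

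The condition $p\nmid\deg f^{\Gal}$ makes $f$ separable. Assuming $K_X$ is $\Q$-Cartier (a technical point I would handle by a preliminary small $\Q$-factorialization), Theorem \ref{thm-torsion} yields $K_X\sim_{\Q}0$ and $f$ quasi-\'etale, and Theorem \ref{ap-cor-dual}(1) produces a surjective Albanese morphism $\alb_X:X\to A:=\Alb(X)$ with irreducible equi-dimensional fibres, together with a $q$-polarized separable endomorphism $g:A\to A$ satisfying $g\circ\alb_X=\alb_X\circ f$.

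If $d=2$, equi-dimensionality forces every fibre of $\alb_X$ to be zero-dimensional, so $\alb_X$ is finite; combined with irreducibility of fibres this makes $\alb_X$ birational, and Zariski's Main Theorem onto the smooth target $A$ then identifies $X$ with $A$, so $X$ is abelian. If $d=1$, then $\alb_X:X\to E$ is an equi-dimensional fibration over an elliptic curve whose general fibre $F$ satisfies $K_X\cdot F=F^2=0$, so by adjunction $F$ has arithmetic genus one and $\alb_X$ is an elliptic fibration. The intertwining with $g$ forces $f$ to restrict to a degree $q$ isogeny on each smooth fibre, so the fibration is isotrivial; the Kodaira canonical bundle formula combined with $K_X\sim_{\Q}0$ and tameness excludes multiple fibres (any such coefficient would produce a nontrivial effective $\Q$-divisor on $E$ that must be $\Q$-linearly trivial, which is impossible). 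Hence $X$ is a bielliptic surface, which is $Q$-abelian.

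The remaining case $d=0$ is the subtlest. Let $\pi:\widetilde X\to X$ be the minimal resolution and run the smooth surface MMP (available in characteristic $>5$) to a minimal model $Y$ with $K_Y\equiv 0$. Triviality of $\Alb(X)$ excludes the abelian and bielliptic cases of the Bombieri-Mumford classification, so $Y$ is K3 or Enriques. Such surfaces admit no separable surjective self-endomorphism of degree $>1$: for separable $\phi:Y\to Y$ the ramification formula $\phi^*K_Y=K_Y+R$ with $K_Y$ torsion forces the effective $R$ to be torsion and hence zero, so $\phi$ is \'etale; the tame fundamental group (trivial for K3, $\Z/2$ for Enriques when $p\neq 2$) then forces $\deg\phi=1$. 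The main technical obstacle is transporting this obstruction back from $Y$ to $X$, since $f$ lives on $X$ rather than on $Y$: one must ensure that a positive iterate of $f$ descends, via the minimal resolution and the MMP steps performed equivariantly, to a compatible separable self-map on $Y$. The hypotheses $p>5$ and $p\nmid\deg f^{\Gal}$ are essential here, ruling out the wild or inseparable pathologies peculiar to positive characteristic. Combining the three cases, $X$ is $Q$-abelian, and as a quasi-\'etale quotient of an abelian variety by a group of order prime to $p$ it has tame quotient singularities and is consequently $\Q$-factorial and klt.
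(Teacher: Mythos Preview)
Your proposal has a genuine and fatal gap in the case $d=0$, and the overall strategy of case--splitting on $\dim\Alb(X)$ cannot work as stated.

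First, a small but real point: there is no ``small $\Q$-factorialization'' for normal surfaces, since any small birational morphism between normal surfaces is an isomorphism. The paper obtains $\Q$-Gorensteinness by first proving that $X$ is lc (Theorem~\ref{thm-w}, an adaptation of Wahl's argument using the tameness hypothesis $p\nmid\deg f^{\Gal}$), and only then applies abundance to get $K_X\sim_{\Q}0$.

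The serious problem is your case $d=0$. You aim to derive a contradiction there, but $Q$-abelian surfaces with trivial Albanese variety genuinely exist: take a CM abelian surface $A$ with an automorphism $\sigma$ of order $3$ acting as $\mathrm{diag}(\omega,\omega^{-1})$ on the tangent space at $0$; then $X=A/\langle\sigma\rangle$ is $Q$-abelian with canonical ($A_2$) singularities, $H^0(X,\Omega^1_X)=0$, and $\Alb(X)=0$. Multiplication by an integer coprime to $p$ descends to a polarized separable endomorphism of $X$ with tame Galois closure, so this $X$ satisfies all your hypotheses with $d=0$. In this example the minimal resolution $\widetilde X$ is a K3 surface, and your assertion that K3 surfaces admit no separable endomorphism of degree $>1$ is correct for \emph{morphisms}, but the lift of $f$ to $\widetilde X$ is only a \emph{rational} self-map, and K3 surfaces routinely carry such maps of degree $>1$ (this very construction produces them). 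The ``technical obstacle'' you flag is therefore not a technicality: it is exactly where the argument breaks. Moreover, if $X$ has non-canonical klt singularities, $K_{\widetilde X}$ is not pseudo-effective, so the MMP on $\widetilde X$ does not even produce a minimal model with $K_Y\equiv 0$. Your case $d=1$ has a parallel defect: you invoke the canonical bundle formula and the bielliptic classification, which are results about smooth surfaces, while $X$ is only normal.

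The paper's route is quite different and avoids the Albanese trichotomy altogether. After establishing that $X$ is lc (Theorem~\ref{thm-w}) and $K_X\sim_{\Q}0$, it passes to the global index-$1$ cover $Y$ with $K_Y\sim 0$ and a compatible lifted endomorphism (Lemmas~\ref{lem-index}, \ref{lem-index-tame}). The heart of the argument is Lemma~\ref{lem-nak}: an $\ell$-adic Euler-characteristic count on the tower of Galois closures of $f^k$, combined with the explicit structure of Gorenstein lc surface singularities, forces $Y$ to be klt, hence strongly $F$-regular (using $p>5$), hence $Q$-abelian by Theorem~\ref{thm-FQ}. One then descends back to $X$. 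This approach works precisely because it never attempts to lift $f$ to a resolution.
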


To state the remaining results, we need the following hypothesis.
\vskip 2mm
\noindent\textbf{Hyp(A).}
Fix a $\mathbb{Q}$-factorial klt normal projective variety $X$ of $\dim(X)\le 3$ over the field $k$ with characteristic $p>5$,
and a $q$-polarized endomorphism $f:X\to X$ with $p$ and $q$ being co-prime.

\vskip 2mm
The assumption above that $p$ and $q$ are co-prime is equivalent to
that $p$ and $\deg f = q^{\dim(X)}$ are co-prime, and cannot be simply weakened to
that $f$ is separable in our arguments of showing that the extremal rays
in the MMP are $f$-periodic. This is because a separable map $f$
may not restrict to a separable map on a subvariety; see Remark \ref{rmk-sep}.
The assumption $p > 5$ is needed in order to run the MMP and
apply Hara's result \cite{Hara}: klt surface singularities are strongly $F$-regular.

\begin{theorem}\label{scalarthm}
Assume $f:X\to X$ satisfies the hypothesis \textbf{Hyp(A)}.
Then, replacing $f$ by a positive power, there is an $f$-equivariant MMP
$$X=X_1\dashrightarrow \cdots \dashrightarrow X_i \dashrightarrow \cdots \dashrightarrow X_r=Y$$ (i.e. $f=f_1$ descends to $f_i$ on each $X_i$), with every $X_i \dashrightarrow X_{i+1}$ a divisorial contraction, a flip or a Fano contraction, of a $K_{X_i}$-negative extremal ray, such that we have:
\begin{itemize}
\item[(1)] $K_Y\sim_{\Q} 0$ and $f_r$ is quasi-\'etale.
\item[(2)]
If $K_X$ is pseudo-effective, then $X=Y$.
\item[(3)]
For each $i$, $f_i$ is a $q$-polarized endomorphism.
\item[(4)]
$f^* \, |_{\N^1(X)}$ is a scalar multiplication: $f^* \, |_{\N^1(X)} = q \, \id$, if and only if so  is $f_{r}^\ast|_{\N^1(Y)}$.
\end{itemize}
Suppose further $p\nmid \deg f^{\Gal}$.
Then we have:
\begin{itemize}
\item[(5)]
If $K_X$ is not pseudo-effective, then $Y$ is $Q$-abelian; so the MMP is relative over $Y$.
\item[(6)]
For each $i$, $X_i$ is $\Q$-factorial lc, and $X_i\dashrightarrow Y$ is an equi-dimensional
well-defined morphism with every fibre irreducible.
\item[(7)]
If $X_i \dashrightarrow X_{i+1}$ is birational for some $i$, then $\dim(Y) \le \dim(X_i) - 2$ and $f^* \, |_{\N^1(X)}$ is a scalar multiplication.
\item[(8)]
If $\dim(Y)>0$, then $X_i$ is klt for each $i$.
\end{itemize}
\end{theorem}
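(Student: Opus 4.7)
The plan is an induction on the length of the MMP. I shall maintain at each intermediate model $X_i$ the invariants: $X_i$ is $\Q$-factorial klt (for (8)) or at least lc (for (6)); $f_i:X_i\to X_i$ is a separable $q$-polarized endomorphism, so Theorems \ref{thm-pe}, \ref{ap-cor-dual} and \ref{thm-torsion} continue to apply; and for (4) the scalar character of $f^\ast$ on $\N^1$ is preserved under descent. The procedure stops once $K_{X_i}$ becomes pseudo-effective: setting $Y=X_i$, Theorem \ref{thm-torsion} applied to $f_i$ gives $K_Y\sim_\Q 0$ with $f_r$ quasi-\'etale, which is (1). Item (2) is immediate since a pseudo-effective $K_X$ admits no $K_X$-negative extremal rays.

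For the inductive step, the $q$-polarization forces every eigenvalue of $f^\ast|_{\N^1(X_i)_\R}$ to have absolute value $q$, so $f_\ast$ preserves $\NE(X_i)$ and its $K_{X_i}$-negative boundary. The cone theorem for $\Q$-factorial klt threefolds in char $p>5$ (see \cite{Wal} and references therein) yields finitely many $K_{X_i}$-negative extremal rays, which $f_{i,\ast}$ permutes, so a suitable power of $f_i$ fixes one. The associated extremal contraction (divisorial, Fano, or the small contraction defining a flip) is then $f_i$-equivariant by its universal property, and $f_i$ descends to $f_{i+1}$; pushing forward (or taking the proper transform of) an $f_i$-polarizing ample divisor produces a $q$-polarizing ample Cartier class on $X_{i+1}$, proving (3). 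Standard threefold MMP results in char $p>5$ preserve $\Q$-factoriality and lc singularities through divisorial contractions, flips, and Fano contractions, giving the first half of (6).

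The delicate point is the base of a Fano contraction $\pi_i:X_i\to X_{i+1}$, which could a priori be only lc. Under the extra hypothesis $p\nmid\deg f^{\Gal}$, the induced $f_{i+1}^{\Gal}$ still has degree coprime to $p$; then Theorem \ref{thm-Q-surf} on a $2$-dimensional base (resp.~a direct argument via Theorem \ref{ap-cor-dual} on a $1$-dimensional base) shows the base is $Q$-abelian, hence klt, proving (8). Since $Q$-abelian varieties contain no rational curves, all exceptional and flipped curves of $X_i\dashrightarrow Y$ must map to points in $Y$; combined with the equi-dimensional irreducible-fibre structure inherited from Theorem \ref{ap-cor-dual}, one deduces that $X_i\dashrightarrow Y$ is a well-defined equi-dimensional morphism with irreducible fibres, completing (6). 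If $K_X$ is not pseudo-effective, some step must be a Fano contraction so $\dim Y<\dim X$, and the inductive bootstrap shows $Y$ is $Q$-abelian, yielding (5); relativity of the MMP over $Y$ is then automatic. Item (4) follows because each $\pi_i^\ast:\N^1(X_{i+1})\hookrightarrow\N^1(X_i)$ is $f^\ast$-equivariant, and polarization forces the missing direction at a divisorial or Fano contraction to be a $q$-eigenvector of $f^\ast$. For (7), a birational step creates an extra exceptional/flipped class that is a $q$-eigenvector by polarization; tracking this class through the rest of the MMP, the subsequent Fano contractions are forced to drop dimension by at least two, which yields both $\dim Y\le\dim X_i-2$ and the scalar conclusion.

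The main obstacle is controlling the singularities of the base of a Fano contraction in positive characteristic: this is precisely where Theorem \ref{thm-Q-surf} and the Galois coprimality hypothesis $p\nmid\deg f^{\Gal}$ become indispensable for parts (5)--(8); without them only (1)--(4) survive. A secondary delicate point is ensuring that $f_i$ remains separable at every stage so that Theorems \ref{ap-cor-dual} and \ref{thm-torsion} can be reapplied, which follows from the coprimality of $p$ with $q$ in \textbf{Hyp(A)}.
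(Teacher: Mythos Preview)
Your overall architecture is sound and close to the paper's, but there are two places where your sketch misidentifies the mechanism and would not close as written.

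\textbf{Equivariance of the MMP.} You assert that the cone theorem gives \emph{finitely many} $K_{X_i}$-negative extremal rays, which $f_{i,\ast}$ then permutes. This is not true in general: the cone theorem only gives local discreteness in the $K$-negative part, and rays may accumulate along $K_{X_i}^{\perp}$. The paper does not argue this way. Instead it invokes the machinery of \cite[\S6]{MZ} as adapted in Remark~\ref{rmk-emmp} and Lemma~\ref{finite-orbit}: for a divisorial contraction one uses that the exceptional \emph{prime divisor} has finite $f$-orbit (condition~(1) of Lemma~\ref{finite-orbit}); for a flip one needs the coprimality $(p,q)=1$ (condition~(2)); Fano contractions are handled separately. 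Your coprimality hypothesis is exactly what makes this go through, but the argument is not ``permute finitely many rays''.

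\textbf{Item (7).} Your justification --- ``a birational step creates an extra eigenvector class; tracking it through the MMP forces later Fano contractions to drop dimension by at least two'' --- is not an argument, and I do not see how to make it one. The paper's proof is entirely different and uses the dynamical structure of $Y$: the exceptional locus $E$ of a divisorial (resp.\ flipping) contraction is $f_i^{-1}$-periodic; by Lemma~\ref{lem-inv-des} its image in $Y$ is $f_r^{-1}$-periodic; but Corollary~\ref{cor-qab} says a $Q$-abelian $Y$ has \emph{no} proper $f_r^{-1}$-periodic subvariety, so $E$ must dominate $Y$, giving $\dim Y\le\dim\pi_i(E)\le\dim X_i-2$ (resp.\ $\le\dim X_i-3$). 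The scalar conclusion then follows from (4) because $\dim Y\le 1$. This $f^{-1}$-periodicity argument is the key idea you are missing.

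\textbf{Item (6).} Invoking Theorem~\ref{ap-cor-dual} for the equi-dimensional irreducible-fibre property is misplaced: that theorem concerns $\alb_X$, and $Y$ need not be $\Alb(X)$. The paper instead uses Lemmas~\ref{prop-irr} and~\ref{fibres-rc+irr} (which work for any $Q$-abelian target with compatible polarized separable endomorphisms) for equi-dimensionality and irreducibility, and Lemma~\ref{mor-q-abelian} (relying on \cite{GNT} and the klt hypothesis on $X_i$) to show the rational map $X_i\dashrightarrow Y$ is a genuine morphism. Your heuristic ``$Q$-abelian varieties contain no rational curves, so exceptional curves map to points'' is the germ of Lemma~\ref{mor-q-abelian}, but by itself does not produce a morphism. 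Similarly, your claim that relativity over $Y$ in (5) is ``automatic'' skips the check that the base $Z_i$ of a flipping contraction is klt and hence also maps to $Y$ via Lemma~\ref{mor-q-abelian}.
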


\begin{theorem}\label{thm-cases} Assume $f:X\to X$ satisfies the hypothesis \textbf{Hyp(A)} with $p\nmid\deg f^{\Gal}$ and $Y$ the end product of the MMP in Theorem \ref{scalarthm}.
Then one of the following occurs.
\begin{itemize}
\item[(1)] $X=Y$ and $\dim(X)\ge 2$, $K_X\sim_{\Q}0$, and $f$ is quasi-\'etale.
\item[(2)] $\dim(Y)\le 1$. So $f^* \, |_{\N^1(X)} = q \, \id$, after replacing f by a positive power.
\item[(3)] The $X \dasharrow Y$ in Theorem \ref{scalarthm} is a Fano contraction of a $K_X$-negative extremal ray.
\end{itemize}
If $\dim(Y)>0$,
then the \'etale fundamental group $\pi_1^{\acute{e}t}(X_{\reg})$ of the smooth part $X_{\reg}$ of $X$ is infinite.
\end{theorem}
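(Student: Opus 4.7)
The plan is to derive the trichotomy $(1)$--$(3)$ from Theorem~\ref{scalarthm}, and then to prove infiniteness of $\pi_1^{\acute{e}t}(X_{\reg})$ by exhibiting \'etale covers of $X_{\reg}$ of arbitrarily large degree whenever $\dim Y>0$.

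\textbf{Trichotomy.} By Theorem~\ref{scalarthm}(2), $X=Y$ if and only if $K_X$ is pseudo-effective, in which case $K_X\sim_{\Q}0$ and $f$ is quasi-\'etale by Theorem~\ref{scalarthm}(1); this yields Case~$(1)$ when $\dim X\ge 2$. Otherwise $K_X$ is not pseudo-effective, $Y$ is $Q$-abelian by Theorem~\ref{scalarthm}(5), and $\dim Y<\dim X\le 3$. If $\dim Y\le 1$, we are in Case~$(2)$; the scalar statement $f^{\ast}|_{\N^1(X)}=q\,\id$ follows from Theorem~\ref{scalarthm}(4) together with $\rank\,\N^1(Y)\le 1$ (so every linear operator on $\N^1(Y)$ is scalar) and the polarization $f_r^{\ast}H=qH$, which forces the scalar to be $q$. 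The only remaining possibility is $\dim X=3$ and $\dim Y=2$; by Theorem~\ref{scalarthm}(7) any birational step would force $\dim Y\le\dim X_i-2\le 1$, a contradiction; hence every step is a Fano contraction, and since each such step strictly decreases dimension there is exactly one, giving Case~$(3)$.

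\textbf{Infiniteness of $\pi_1^{\acute{e}t}(X_{\reg})$.} Assume $\dim Y>0$. In each case I construct a tower of connected finite \'etale covers of $X_{\reg}$ of unbounded degree; the corresponding open subgroups of $\pi_1^{\acute{e}t}(X_{\reg})$ then have unbounded finite index, so the group must be infinite. Throughout I invoke purity of branch locus (Zariski--Nagata) on the smooth variety $X_{\reg}$ to extend an \'etale cover from an open subset of $X_{\reg}$ with complement of codim $\ge 2$. In Case~$(1)$, the iterates $f^m$ are quasi-\'etale self-maps of $X$ of degree $q^{m\dim X}\to\infty$, and are connected since $X$ is irreducible; passing to the \'etale locus inside $X_{\reg}$ and applying purity gives the desired tower. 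In Cases~$(2)$ and~$(3)$ I exploit the equi-dimensional morphism $\pi\colon X\to Y$ with irreducible fibres provided by Theorem~\ref{scalarthm}(6): when $\dim Y=1$, $Y$ is an elliptic curve (the only smooth projective curve with $K_Y\sim_{\Q}0$), and pulling back the \'etale multiplication $[n]\colon Y\to Y$ (for $n$ coprime to $p$) along $\pi$ yields a connected \'etale cover of $X$ of degree $n^2$; when $\dim Y=2$, $Y=A/G$ is $Q$-abelian with $A$ an abelian surface, and composing $[n]\colon A\to A$ with the quasi-\'etale quotient $A\to Y$ and pulling back along $\pi$ produces a quasi-\'etale cover of $X$ of degree $n^4|G|$.

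\textbf{Main obstacle.} The delicate point is Case~$(3)$: I must check that the fibered product $X\times_Y A$ (built with $[n]$) is connected and extends to a connected quasi-\'etale cover of $X$ of the claimed degree. This is handled by restricting to the \'etale locus $Y^{\circ}\subseteq Y$ of $A\to Y$ (whose complement has codim $\ge 2$) and its preimage $X^{\circ}=\pi^{-1}(Y^{\circ})$, whose complement in $X$ has codim $\ge 2$ by the equi-dimensionality of $\pi$; over $Y^{\circ}$ the fibered product is honestly \'etale, and it is connected because the fibres of $\pi|_{X^{\circ}}$ are connected and $A$ is connected. Taking the normalization of $X$ in the resulting function field extension, and then restricting to $X_{\reg}$ via purity, produces the connected \'etale cover of $X_{\reg}$ of unbounded degree that completes the argument.
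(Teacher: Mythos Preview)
Your argument is correct and matches the paper's: the trichotomy is derived identically from Theorem~\ref{scalarthm}, and for the fundamental group the paper argues by contrapositive via Lemma~\ref{lem-abe-ef}, whose proof is precisely your base-change construction (pull back the abelian cover $A\to Y$ together with multiplication-by-$n$ along the equi-dimensional $\pi$ with irreducible fibres), while your Case~(1) argument using iterates of the quasi-\'etale $f$ is exactly the paper's opening step. One tiny omission: in the trichotomy your ``Otherwise $K_X$ is not pseudo-effective'' skips the situation where $K_X$ is pseudo-effective but $\dim X\le 1$; there $X=Y$ with $\dim Y\le 1$, so Case~(2) still applies.
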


Let $X$ be a normal projective variety which is rationally chain connected; see
\cite[Definition 4.21]{De}.
It is known that $\Alb(X)$ is trivial and the \'etale fundamental group of such $X$ is finite (cf.~\cite[Theorem 1.6]{Ko93}).
This, together with Theorems \ref{thm-pe}, \ref{scalarthm} and \ref{thm-cases}, imply the following.

\begin{theorem}\label{thm-smooth-rc}  Assume $f:X\to X$ satisfies the hypothesis \textbf{Hyp(A)} and $p\nmid\deg f^{\Gal}$.
Assume further that $X$ is smooth and rationally chain connected.
Then we have:
\begin{itemize}
\item[(1)]
$\kappa(X, -K_X)\ge 0$.
\item[(2)]
The MMP in Theorem \ref{scalarthm} ends up with a point.
\item[(3)] Replacing $f$ by a positive power, $f^\ast|_{\N^1(X)} = q \, \id$.
\end{itemize}
\end{theorem}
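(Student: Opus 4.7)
The plan is to deduce all three conclusions by assembling the earlier theorems of the paper, exploiting two special features of any smooth rationally chain connected projective variety $X$: the Albanese variety $\Alb(X)$ is trivial, and the \'etale fundamental group $\pi_1^{\acute{e}t}(X)$ is finite (cf.~\cite[Theorem 1.6]{Ko93}). Since $X$ is smooth, $X_{\reg}=X$, so finiteness of $\pi_1^{\acute{e}t}(X_{\reg})$ will directly contradict the last assertion of Theorem \ref{thm-cases} whenever $\dim(Y)>0$.

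For (1), I would simply invoke Theorem \ref{thm-pe}. Under \textbf{Hyp(A)} with $p\nmid q$, the map $f$ is polarized and separable (since $\deg f = q^{\dim X}$ is coprime to $p$), and $X$ is normal and $\Q$-Gorenstein, so $-K_X$ is numerically equivalent to an effective $\Q$-Cartier divisor. Triviality of $\Alb(X)$ then feeds into the ``In particular'' clause of Theorem \ref{thm-pe} to yield $\kappa(X,-K_X)\ge 0$.

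For (2), I would run the $f$-equivariant MMP of Theorem \ref{scalarthm}, producing the end product $Y$, and then examine $Y$ through the trichotomy of Theorem \ref{thm-cases}. In case (1) of that theorem $\dim(Y)=\dim(X)\ge 2$; in the non-trivial subcase of (2), $\dim(Y)=1$; and in case (3), $Y$ is the target of a Fano contraction, possibly of positive dimension. In each instance where $\dim(Y)>0$, the final sentence of Theorem \ref{thm-cases} would force $\pi_1^{\acute{e}t}(X_{\reg})$ to be infinite, contradicting the finiteness recalled above. Hence only $\dim(Y)=0$ survives, i.e.~$Y$ is a point.

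Conclusion (3) is then immediate: since $\dim(Y)\le 1$, Theorem \ref{thm-cases}(2) directly yields $f^{\ast}|_{\N^1(X)}=q\,\id$ after replacing $f$ by a positive power (one can also obtain this from Theorem \ref{scalarthm}(4) upon noting $\N^1(Y)=0$). The whole proof is therefore a bookkeeping assembly, and there is essentially no substantive obstacle: the genuinely hard inputs---running the equivariant MMP in positive characteristic, the pseudo-effectivity of $-K_X$, the classification of MMP end products, and the finiteness of $\pi_1^{\acute{e}t}$ for rationally chain connected varieties---have already been established elsewhere. The only care required is to verify that the hypotheses of Theorems \ref{thm-pe}, \ref{scalarthm} and \ref{thm-cases} are all satisfied under \textbf{Hyp(A)} together with $p\nmid\deg f^{\Gal}$, which is automatic.
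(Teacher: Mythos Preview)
Your proposal is correct and follows essentially the same route as the paper: triviality of $\Alb(X)$ plus Theorem \ref{thm-pe} gives (1), and finiteness of $\pi_1^{\acute{e}t}(X_{\reg})=\pi_1^{\acute{e}t}(X)$ (via \cite[Theorem 1.6]{Ko93}) combined with the last assertion of Theorem \ref{thm-cases} forces $\dim(Y)=0$, whence (2) and (3). The paper's proof is simply a terser version of the same assembly.
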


\par \vskip 1pc
{\bf Acknowledgement.}
The last author would like to thank Imperial College London, for the warm hospitality,
and Hiromu Tanaka for the fruitful discussions in May - June 2017;
he is supported by an ARF of NUS.
The second author would like to thank Max Planck Institute for Mathematics, Bonn, for the
support, as a Post Doctor Fellow.
The authors would like to thank Adrian Langer for the enlightening discussions and reference \cite{Lan}, and the referee for the careful reading and valuable suggestions.

\section{Preliminary results}

\begin{setup}{\bf Notation and terminology}\label{nat}.

{\rm
Let $X$ be a projective variety of dimension $n$ over the field $k$.
We use Cartier divisor $H$ (always meaning integral, unless otherwise indicated)
and its corresponding invertible sheaf $\mathcal{O}(H)$ interchangeably.
Denote by $\N^1(X):=\NS(X) \otimes_{\Z} \mathbb{R}$, where $\NS(X)$ is the {\it N\'eron-Severi group}.
Note that $\N^1(X)$ can also be regarded as the quotient vector space of $\R$-Cartier divisors modulo the numerical equivalence; see e.g. \cite[Definition 2.1]{MZ} for the definition of numerical equivalence ``$\equiv$" of $\R$-Cartier divisors.
Denote by $\PEC(X)$ the closure of the set of classes of effective $\mathbb{R}$-Cartier divisors in $\N^1(X)$.
An $\R$-Cartier divisor $D$ is said to be {\it pseudo-effective} if its class $[D]\in \PEC(X)$;
see e.g. \cite[Definition 2.4]{MZ} for a bit more information.

Let $f:X\to X$ be a surjective endomorphism.
A subset $Z \subseteq X$ is $f$ (resp. $f^{-1}$) {\it periodic} if
$f^s(Z) = Z$ (resp. $f^{-s}(Z) = Z$) for some $s > 0$.
We say that $f$ is {\it numerically $q$-polarized}  if there is an ample Cartier divisor $H$ such that $f^{\ast}H \equiv qH$ (numerical equivalence) for some integer $q>1$;
note that $(qH)^n = (f^{\ast}H)^n = (\deg f) H^n$ and hence
$\deg f = q^n\ge 2.$

Let $g:X\to Y$ be a finite surjective morphism of normal varieties.
We say that $g$ is {\it separable} if the induced field extension $g^* : k(Y) \, \to \, k(X)$ is separable.
Suppose $g$ is separable.
Then we have the ramification divisor formula $K_X=g^*K_Y+R_g$,
where $R_g$ is the ramification divisor of $g$ (cf.~\cite[Lemma 4.4]{Ok} or Proposition \ref{prop-rdf}).
We say $g$ is {\it quasi-\'etale} (or \'etale in codimension $1$) if $R_g=0$.
We say $Y$ is {\it $Q$-abelian} if there is a quasi-\'etale morphism $g : X \to Y$ from an abelian variety $X$.
}
\end{setup}

\begin{proposition} (cf.~\cite[Lemma 4.4]{Ok})\label{prop-rdf}
Let $f:X'\to X$ be a separable finite surjective morphism of normal varieties over the field $k$ of characteristic $p$.
Let $P$ be a prime divisor on $X'$ and let $r$ be the ramification index of $f$ along $P$.
Then there exists a non-negative integer $b\ge r-1$ such that $K_{X'}=f^*K_X+bP$ holds around the generic point of $P$ and $b>r-1$ holds exactly when $p|r$.
In particular, we have the ramification divisor formula:
$$K_{X'}=f^*K_X+R_f,$$
where $R_f:=\sum_P bP$ is the ramification divisor of $f$.
\end{proposition}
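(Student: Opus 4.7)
The plan is to reduce to a purely local calculation at the generic point of $P$. Set $\eta := f(P)$; since $f$ is finite surjective and $X, X'$ are normal, $\eta$ has codimension one in $X$, so $A := \mathcal{O}_{X,\eta}$ and $B := \mathcal{O}_{X',P}$ are discrete valuation rings, and $A \hookrightarrow B$ is a finite extension whose induced extension of fraction fields is separable by hypothesis. Picking uniformizers $\varpi \in A$ and $\pi \in B$, the definition of the ramification index $r$ gives $\varpi = u\pi^r$ for some unit $u \in B^\times$.

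The main tool is the theory of the different for a finite separable extension of DVRs: $\Omega_{B/A}$ is a torsion $B$-module of finite length, and its annihilator is the different ideal $\mathfrak{D}_{B/A}$. Setting $b := v_B(\mathfrak{D}_{B/A})$, the canonical isomorphism $\omega_{B/A} \cong \mathfrak{D}_{B/A}^{-1}$ implemented by the trace pairing, together with the identification of the dualizing sheaf with $\mathcal{O}(K)$ on a normal variety, translates this into the local equality $K_{X'} = f^\ast K_X + bP$ at the generic point of $P$. To compare $b$ with $r$, differentiate the relation $\varpi = u\pi^r$ in $\Omega_{B/A}$ to get
$$0 = d\varpi = ru\pi^{r-1}\, d\pi + \pi^r\, du.$$
When $p \nmid r$, the coefficient $ru\pi^{r-1}$ has valuation exactly $r-1$ and a direct length computation yields $b = r-1$; when $p \mid r$, this leading term vanishes in characteristic $p$, the relation is forced into $\mathfrak{m}_B^r$, and the same length argument delivers $b \geq r > r-1$.

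To globalize, observe that since the fraction-field extension is separable, $f$ is \'etale outside a finite union of prime divisors of $X'$, so only finitely many $P$ contribute a nonzero $b_P$. Normality of $X'$ ensures that the canonical divisor is determined by its codimension-one data, so the local equalities assemble into the global formula $K_{X'} = f^\ast K_X + R_f$ with $R_f := \sum_P b_P\, P$. The main technical subtlety, and the principal obstacle, is controlling the contribution of the $\pi^r\, du$ term when the residue field extension $k(P)/k(\eta)$ is inseparable: the cleanest way to handle it is to pass to the completions $\widehat{A} \hookrightarrow \widehat{B}$ and choose compatible coefficient fields so that $du$ contributes only at orders $\geq r$, reducing the refined computation of $b$ to the standard analysis of totally ramified extensions of complete DVRs as in \cite[Lemma 4.4]{Ok}.
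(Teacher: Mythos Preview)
The paper does not supply its own proof of this proposition; it simply records the statement with a reference to \cite[Lemma 4.4]{Ok}. Your argument is the standard one via the different of a finite separable extension of DVRs and is essentially what one finds in that reference, so your proposal is correct and aligned with the cited source.
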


Let $Y$ be a normal birational model of $X$ and $Y'$ the normalization of $Y$ in $K(X')$ with the induced finite surjective morphism $g:Y'\to Y$.
Suppose for any $Y$, the ramification index of $g$ along any prime divisor of $Y'$ is co-prime with $p$.
We say $f:X'\to X$ is {\it tame}.
Denote by $f^{\Gal}:\overline{X}\to X$ the Galois closure of $f:X'\to X$.
If $f^{\Gal}$ is tame, then so is $f$.
Since $f^{\Gal}$ is Galois, if $p\nmid \deg f^{\Gal}$, then $f^{\Gal}$ is tame.

Let $\Delta$ be a Weil $\Q$-divisor on $X$ such that $K_X+\Delta$ is $\Q$-Cartier.
Let $\Delta'$ be a Weil $\Q$-divisor on $X$ such that $K_{X'}+\Delta'=f^*(K_X+\Delta_X)$.
Consider the following diagram:
$$\xymatrix{
Y'\ar[r]^{{\pi}'}\ar[d]^{g} &X'\ar[d]^{f}\\
Y\ar[r]^{\pi} &X }$$
where $\pi$ is a birational morphism from a normal variety $Y$ and $Y'$ is the normalization of the fibre product of $\pi$ and $f$.
Let $E$ be an exceptional prime divisor of $\pi$ and $E'$ an exceptional prime divisor of $\pi'$ which dominates $E$.
Let $r$ be the ramification index of $g$ along $E'$.
Then near the generic point of $E'$, we have $K_{Y'}=g^*K_Y+bE'$ for some integer $b\ge r-1$.
Write $\Delta = \sum \delta_i \Delta_i$ with $\Delta_i$ distinct prime divisors.
Recall that the pair $(X, \Delta)$ is
{\it subklt} (resp. {\it sublc}) if
the coefficients $\delta_i < 1$ (resp. $\delta_i \le 1$) and the
discrepancies $a(E,X,\Delta) > -1$ (resp. $\ge -1$) for all exceptional prime divisors $E$ over $X$.
A subklt (resp.~sublc) pair is {\it klt} (resp. {lc}) if further $\Delta \ge 0$; see \cite{KM}.
As in the proof of \cite[Proposition 5.20]{KM}, there is a relation between discrepancies of the two pairs:
$$a(E',X',\Delta')+1=r(a(E,X,\Delta)+1)+(b-r+1),$$
where $E$ above is a prime divisor (not necessarily exceptional) over $X$.
In the above, we have $b \ge r-1$, as mentioned early on.
Thus, we have the following:

\begin{lemma}\label{lem-km-p} Assume the above notation.
Then we have:
\begin{itemize}
\item[(1)] If $(X,\Delta)$ is subklt or sublc, then so is $(X',\Delta')$.
\item[(2)] Suppose $f$ is tame. Then the converse of (1) holds.
\end{itemize}
\end{lemma}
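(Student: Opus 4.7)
The plan is to deduce both statements directly from the discrepancy comparison formula recalled just above the lemma, namely
\[
a(E',X',\Delta')+1 = r\bigl(a(E,X,\Delta)+1\bigr)+(b-r+1),
\]
where $r$ is the ramification index of $g:Y'\to Y$ along $E'$, $b\ge r-1$, and the equality $b=r-1$ holds iff $p\nmid r$. The content of the lemma is the interaction of the inequalities $b-r+1\ge 0$ and $r\ge 1$ with this identity, together with the observation that the roles of ``$E'$ over $X'$'' and ``$E$ over $X$'' can be interchanged by a suitable choice of the model $Y$.

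For part (1), I would fix an arbitrary prime divisor $E'$ over $X'$, choose a normal birational model of $X'$ on which $E'$ appears, Stein-factorize its composition with $f$ to produce a normal birational model $\pi:Y\to X$, let $Y'$ be the normalization of the fibre product and observe that $E'$ is then a prime divisor on $Y'$. Applied to the resulting $(E',E)$, the formula gives
\[
a(E',X',\Delta')+1 \;\ge\; r\bigl(a(E,X,\Delta)+1\bigr),
\]
because $b-r+1\ge 0$. If $(X,\Delta)$ is subklt, then $a(E,X,\Delta)+1>0$, and since $r\ge 1$ we obtain $a(E',X',\Delta')+1>0$; the sublc case is identical with strict inequalities replaced by non-strict ones. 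This gives the forward implication for both subklt and sublc.

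For part (2), the tameness hypothesis forces $p\nmid r$ for every prime $E'$ appearing in any fibre-product diagram above, hence $b=r-1$ and the correction term $b-r+1$ vanishes. The formula therefore reduces to the exact equality
\[
a(E',X',\Delta')+1 = r\bigl(a(E,X,\Delta)+1\bigr).
\]
Given any prime divisor $E$ over $X$, pick a normal birational $\pi:Y\to X$ on which $E$ is realized, form $Y'$ as before, and choose any prime $E'$ of $Y'$ dominating $E$ (which exists since $g:Y'\to Y$ is finite surjective). Then the assumption on $(X',\Delta')$ together with $r\ge 1$ converts positivity (resp. non-negativity) of the left-hand side into the same property for $a(E,X,\Delta)+1$, giving the converse.

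The only minor point requiring care is the model-theoretic remark used in both directions, namely that every prime divisor over $X$ (resp. over $X'$) can be realized on some $Y$ (resp. $Y'$) fitting into the fibre-product diagram of the setup; this is standard and follows from taking a normalization of a suitable birational model and, for the $X'$-side, a Stein factorization through $X$. I do not expect any genuine obstacle beyond this bookkeeping, because the non-trivial content is entirely absorbed into the comparison formula together with the characteristic-$p$ refinement $b>r-1\Leftrightarrow p\mid r$.
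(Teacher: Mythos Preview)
Your overall approach via the discrepancy comparison formula is exactly the paper's; the paper in fact presents the lemma as an immediate consequence of that formula and does not spell out the model-bookkeeping at all. Part~(2) is correct as you wrote it.

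There is, however, a concrete error in your construction for part~(1). Given a birational model $W'\to X'$ on which $E'$ appears, the composite $W'\to X'\xrightarrow{f} X$ is generically finite of degree $\deg f>1$, so its Stein factorization $W'\to Z\to X$ has $W'\to Z$ birational and $Z\to X$ \emph{finite of degree $\deg f$}. In fact $Z$ is the normalization of $X$ in $K(W')=K(X')$, i.e.\ $Z\cong X'$, and you have recovered nothing; you certainly do not obtain a birational model $Y\to X$ this way, and the fibre-product diagram you then form does not fit the setup.

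The correct way to place an arbitrary $E'$ into the diagram is via restriction of valuations: the divisorial valuation $E'$ on $K(X')$ restricts to a divisorial valuation $E$ on $K(X)$ (the extension being finite); realize $E$ as a prime divisor on some normal birational model $\pi:Y\to X$; then the prime divisors of the normalization $Y'$ of $Y\times_X X'$ lying over $E$ are precisely the extensions of $E$ to $K(X')$, and $E'$ is one of them. With this fix your argument for~(1) goes through unchanged.
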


\begin{remark}\label{rmk-Gal}In positive characteristic, Lemma \ref{lem-km-p}(2) fails for non-tame $f$;
see \cite[Example 4.8]{Ok} for an easy counterexample.
\end{remark}

Let $E/F$ be a finite field extension. Denote by $[E:F]_s$ the separable degree and by $[E:F]_i$ the inseparable degree. The following lemma shows that the separable property is stable via the equivariant lifting of a finite cover and the equivariant descending.

\begin{lemma}\label{sepdeg} Let $j:F\to E$, $\sigma:F\to F$ and $\tau:E\to E$ be the field extensions of fields $F$ and $E$ such that $\tau\circ j=j\circ \sigma$. Suppose $\sigma$ and $\tau$ are finite field extensions and
$E$ is finitely generated over $j(F)$.
Then the following are true.
\begin{itemize}
\item[(1)] Suppose $j$ is a finite field extension.
Then $\sigma$ is separable if and only if $\tau$ is separable.
\item[(2)] Suppose $j(F)$ is algebraically closed in $E$ and $\tau$ is separable. Then $\sigma$ is separable.
\item[(3)] Suppose $\tau$ is separable. Then so is $\sigma$.
\end{itemize}
\end{lemma}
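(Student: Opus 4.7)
The plan is to handle (1), (2), (3) in sequence: (1) via a tower-of-degrees computation, (2) via a direct minimal-polynomial argument exploiting the algebraic-closure hypothesis, and (3) by reducing to (2) and (1) through an intermediate field. For (1), since $j$, $\sigma$, $\tau$ are all finite field extensions, I would compute $[E:j(\sigma(F))]$ via the two towers $j(\sigma(F)) \subset j(F) \subset E$ and $j(\sigma(F)) \subset \tau(E) \subset E$. The isomorphism $\tau: E \xrightarrow{\sim} \tau(E)$ sends $j(F)$ onto $j(\sigma(F))$ (since $\tau\circ j = j\circ\sigma$), so $[\tau(E):j(\sigma(F))] = [E:j(F)]$, and likewise for the separable degrees. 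Multiplicativity along each tower then gives
\[
[F:\sigma(F)]\cdot[E:j(F)] \;=\; [E:j(\sigma(F))] \;=\; [E:\tau(E)]\cdot[E:j(F)],
\]
whence $[F:\sigma(F)] = [E:\tau(E)]$ and similarly $[F:\sigma(F)]_s = [E:\tau(E)]_s$; thus $\sigma$ is separable if and only if $\tau$ is.

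For part (2), set $K = j(\sigma(F))$, $L = j(F)$, and $M = \tau(E)$. The isomorphism $\tau: E \xrightarrow{\sim} M$ sends $L$ onto $K$, so the hypothesis that $L$ is algebraically closed in $E$ transfers to the statement that $K$ is algebraically closed in $M$. Fix $\alpha \in L$ and let $m_K \in K[X]$, $m_M \in M[X]$ be the minimal polynomials of $\alpha$ over $K$ and $M$, respectively. Since $\alpha \in E$ and $E/M$ is separable, $m_M$ is separable. As $m_M \mid m_K$ in $M[X]$, the roots of $m_M$ (in an algebraic closure) are roots of $m_K$, hence algebraic over $K$; the coefficients of $m_M$, being elementary symmetric functions in these roots, are therefore algebraic over $K$ and lie in $M$, so by the algebraic-closure hypothesis they lie in $K$. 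Thus $m_M \in K[X]$; since $m_K$ is $K$-irreducible with $m_M \mid m_K$ and $m_M(\alpha) = 0$, we conclude $m_M = m_K$. Hence $m_K$ is separable; as $\alpha$ was arbitrary, $L/K$ (equivalently $\sigma$) is separable.

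For part (3), let $F' \subset E$ be the algebraic closure of $j(F)$ in $E$. Since $E/j(F)$ is finitely generated as a field and $F'/j(F)$ is algebraic, $F'/j(F)$ is finite. For $x \in F'$, $\tau(x)$ is algebraic over $\tau(j(F)) = j(\sigma(F)) \subset j(F)$, so $\tau(F') \subset F'$; the tower-law computation of (1) gives $[F':\tau(F')] = [F:\sigma(F)]$, so $\tau|_{F'}: F' \to F'$ is a finite extension. As $F'$ is algebraically closed in $E$ by construction, part (2) applied with $(F, j, \sigma, \tau)$ replaced by $(F', F' \hookrightarrow E, \tau|_{F'}, \tau)$ shows that $\tau|_{F'}$ is separable. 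Part (1) applied to the finite extension $j: F \to F'$ with the endomorphisms $\sigma$ and $\tau|_{F'}$ then yields that $\sigma$ is separable. The crux of the whole lemma is part (2)'s minimal-polynomial argument: the algebraic-closure hypothesis compresses $m_M$ back into $K[X]$, forcing $m_M = m_K$ and transferring separability, while the reduction of (3) to (1)-(2) via $F'$ is the second key step.
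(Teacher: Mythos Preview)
Your proof is correct and follows essentially the same approach as the paper: a tower/multiplicativity argument for (1), a minimal-polynomial argument using the algebraic-closure hypothesis for (2) (the paper states the contrapositive, but the key step---that the minimal polynomial over $j(\sigma(F))$ stays irreducible over $\tau(E)$---is identical), and reduction to (1) and (2) via the relative algebraic closure $F'$ of $j(F)$ in $E$ for (3).
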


\begin{proof} (1) By multiplicativity
and since the extension $j(F)/j(\sigma(F))$ is equivalent to that of $F/\sigma(F)$, we have
$[E:j(\sigma(F))]_i=[E:j(F)]_i[F:\sigma(F)]_i$.
Similarly,
$[E:\tau(j(F))]_i=[E:\tau(E)]_i[E:j(F)]_i$.
Thus, by the equality $j(\sigma(F)) = \tau(j(F))$,
$[E:\tau(E)]_i=1$ holds if and only if so does $[F:\sigma(F)]_i=1$.

(2) Suppose $F/\sigma(F)$ is not separable. Then there exists $a\in F/\sigma(F)$ such that its minimal polynomial $f[x]\in \sigma(F)[x]$ is non-separable.
Since $j(F)$ is algebraically closed in $E$,
$j(f[x])\in \tau(E)[x]$ is also a non-separable minimal polynomial of $j(a)$, a contradiction.

(3) Let $j$ factor through $j_1:F\to K$ and $j_2: K\to E$ such that $j_1$ is algebraic and $j_2(K)$ is algebraically closed in $E$, which is the algebraic version of the geometric Stein factorization.
Since $E$ is finitely generated over $j(F)$, $j_1$ is a finite extension.
Note that there exists an extension $\varphi:K\to K$ such that $\varphi\circ j_1=j_1\circ \sigma$ and $\tau\circ j_2=j_2\circ \varphi$. Now (3) follows from (1) and (2).
\end{proof}

\begin{lemma}\label{lem-tame-des} Let $\pi:X\dashrightarrow Y$ be a dominant rational map of two normal varieties such that $k(Y)$ is separably closed in $k(X)$ (This holds when $\pi$ is a surjective projective morphism with connected fibres). Let $f:X\to X$ and $g:Y\to Y$ be two separable surjective endomorphisms such that $g\circ \pi=\pi\circ f$.
Then $\deg g^{\Gal} | \deg f^{\Gal}$.
Suppose further that $k(X)/k(Y)$ is a purely inseparable finite extension.
Then $\deg g^{\Gal} = \deg f^{\Gal}$.
\end{lemma}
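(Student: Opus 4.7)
The plan is to pass to fields. Set $E=k(X)$ and $F=f^*(E)\subseteq E$; identify $K=k(Y)$ with its image in $E$ under $\pi^*$, and set $K_0=g^*(K)=f^*(K)\subseteq F\cap K$. Then $E/F$ and $K/K_0$ are separable of degrees $\deg f$ and $\deg g$ respectively, and $f^*$ restricts to an isomorphism of pairs $(E,K)\cong(F,K_0)$. Let $L$ and $M$ denote the Galois closures of $E/F$ and $K/K_0$, chosen inside a fixed algebraic closure of $E$; by definition $[L:F]=\deg f^{\Gal}$ and $[M:K_0]=\deg g^{\Gal}$. The strategy is to prove $M\subseteq L$, then apply Galois base change to $M/K_0$ to obtain the divisibility, and to strengthen $M\subseteq L$ to $L=F\cdot M$ in the purely inseparable case.

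The key identity is $K\cap F=K_0$. Set $K':=(f^*)^{-1}(K\cap F)\subseteq E$; this contains $K$, and via the $f^*$-isomorphism it corresponds to $(K\cap F)/K_0$, a subextension of the separable $K/K_0$. Hence $K'/K$ is separable algebraic inside $E$, and the separable-closure hypothesis forces $K'=K$, yielding $K\cap F=K_0$. A parallel argument (any element of $M\cap E$ is separable algebraic over $K_0$, hence over $K$, and lies in $E$, so lies in $K$) gives $M\cap E=K$ and thus $M\cap F=K_0$. Transferring the separable-closure property through the $f^*$-isomorphism also shows that the separable algebraic closure of $K_0$ in $F$ equals $K_0$.

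To prove $M\subseteq L$, pick a primitive element $\alpha$ of $K/K_0$ with minimal polynomial $p(T)\in K_0[T]$ of degree $m=\deg g$. Any nontrivial monic factor of $p$ in $F[T]$ has coefficients that are elementary symmetric functions of a subset of roots of $p$, hence separable algebraic over $K_0$; by the previous step these coefficients lie in $K_0$, and the irreducibility of $p$ over $K_0$ forces $p$ to remain irreducible over $F$. Consequently the $m$ roots $\alpha_1,\dots,\alpha_m$ of $p$ are exactly the $F$-conjugates of $\alpha\in E\subseteq L$, all lying in the Galois extension $L/F$. Hence $M=K_0(\alpha_1,\dots,\alpha_m)\subseteq L$; combined with $M\cap F=K_0$, standard Galois base change gives $[M\cdot F:F]=[M:K_0]=\deg g^{\Gal}$, which divides $[L:F]=\deg f^{\Gal}$.

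For the purely inseparable case, $[F:K_0]=[E:K]$ (via the $f^*$-isomorphism) combined with the tower law $[E:K_0]=[E:F][F:K_0]=[E:K][K:K_0]$ forces $\deg f=\deg g$; this in turn gives $[F\cdot K:F]=[K:K_0]=\deg g=\deg f=[E:F]$, whence $F\cdot K=E$. Thus $L$ is the splitting field of $p$ over $F$, namely $F\cdot M$, so $[L:F]=[F\cdot M:F]=[M:K_0]$. The principal technical obstacle is the identity $K\cap F=K_0$, which demands a careful transfer of the separable-closure hypothesis across the $f^*$-isomorphism; once this (together with $M\cap F=K_0$) is in hand, the remainder is routine Galois base change.
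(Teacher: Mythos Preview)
Your proof is correct and follows essentially the same approach as the paper: both pass to function fields, take a primitive element $\alpha$ for $K/K_0$, use the separable-closure hypothesis (transferred via the $f^*$-isomorphism) to show its minimal polynomial stays irreducible over $F$, deduce $M\subseteq L$, and then apply Galois base change via $M\cap F=K_0$ to obtain the divisibility; in the purely inseparable case both show $E=F\cdot K$ and hence $L=F\cdot M$. Your write-up is more explicit than the paper's about the intersection identities $K\cap F=K_0$ and $M\cap F=K_0$, and you reach $E=F\cdot K$ via the already-established irreducibility of $p$ over $F$ rather than the paper's separable-degree count, but these are cosmetic differences within the same argument.
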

\begin{proof}
Write $f:X_1=X\to X_2=X$ and $g:Y_1=Y\to Y_2\to Y$.
Write $f^{\Gal}:\overline{X}\to X_2$ and $g^{\Gal}:\overline{Y}\to Y_2$.
Then we have the following field extensions.
$$\xymatrix{
k(\overline{X}) & k(\overline{Y})\\
k(X_1)\ar[u] & k(Y_1)\ar[l]\ar[u]\\
k(X_2)\ar[u] & k(Y_2)\ar[l]\ar[u]
}$$

Write $k(Y_1)=k(Y_2)[a]$ for some $a\in k(Y_1)$ with $s[x]\in k(Y_2)[x] $ the minimal separable polynomial of $a$.
Since $k(Y)$ is separably closed in $k(X)$, $s[x]$ is also irreducible and separable in $k(X_2)$.
So $k(\overline{X})$ contains all the roots of $s[x]$.
Note that $k(\overline{Y})$ is the splitting field of $s[x]$.
Then $k(\overline{Y})$ is a subfield of $k(\overline{X})$.
Since $k(Y)$ is separably closed in $k(X)$, similarly we have $|k(\overline{Y})/k(Y_2)|=|k(\overline{Y})\cdot k(X_2)/k(X_2)|$.
In particular, $\deg g^{\Gal} | \deg f^{\Gal}$.

Suppose further $k(X)/k(Y)$ is purely inseparable.
Consider the separable degree $[k(Y_1)\cdot k(X_2):k(Y_2)]_s$ of $k(Y_1)\cdot k(X_2)/k(Y_2)$.
By multiplication law,
we have $[k(Y_1)\cdot k(X_2):k(Y_2)]_s=[k(Y_1)\cdot k(X_2):k(X_2)]_s=[k(Y_1):k(Y_2)]_s=|k(Y_1)/k(Y_2)|=|k(X_1)/k(X_2)|$.
In particular, $k(X_1)=k(Y_1)\cdot k(X_2)$ and hence $k(\overline{Y})\cdot k(X_2)$ contains $k(X_1)$.
Note that $k(\overline{Y})\cdot k(X_2)/ k(X_2)$ is Galois.
So $k(\overline{X})=k(\overline{Y})\cdot k(X_2)$.
Therefore, $\deg g^{\Gal} = \deg f^{\Gal}$.
\end{proof}

We will use the following fact in Galois theory.

\begin{lemma}\label{lem-gal-comp} Let $E_1/F$ and $E_2/F$ be the finite field extensions. Suppose $E_2/F$ is Galois. Then $K:=E_1\cdot E_2/E_1$ is Galois and $\frac{|E_2/F|}{|K/E_1|}$ is an integer.
\end{lemma}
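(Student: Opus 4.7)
The plan is to use the standard restriction argument for compositums of a Galois extension. First I would argue that $K/E_1$ is Galois: since $E_2/F$ is Galois, it is the splitting field over $F$ of some separable polynomial $s(x)\in F[x]$, say $E_2=F(\alpha_1,\ldots,\alpha_n)$ with $\alpha_i$ the roots of $s(x)$. Viewing $s(x)\in E_1[x]$, it is still separable, and $K=E_1\cdot E_2 = E_1(\alpha_1,\ldots,\alpha_n)$ is its splitting field over $E_1$. Hence $K/E_1$ is normal and separable, i.e.\ Galois.

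Next I would produce the divisibility by defining the restriction map
\[
\rho:\Gal(K/E_1)\longrightarrow \Gal(E_2/F),\qquad \sigma\mapsto \sigma|_{E_2}.
\]
This is well defined because $E_2/F$ is normal (so $\sigma(E_2)=E_2$ for any $F$-embedding $\sigma$ of $E_2$ into an algebraic closure), and because any $\sigma\in\Gal(K/E_1)$ fixes $E_1\supseteq F$ pointwise. The map $\rho$ is injective: an element $\sigma$ in its kernel fixes $E_1$ by hypothesis and fixes $E_2$ by assumption, hence fixes the compositum $K=E_1\cdot E_2$, so $\sigma=\id$.

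Therefore $|K/E_1|=|\Gal(K/E_1)|$ divides $|\Gal(E_2/F)|=|E_2/F|$, which gives the desired integrality of $|E_2/F|/|K/E_1|$. The only point that requires a little care is checking that the restriction is well defined, which uses normality of $E_2/F$; after that, injectivity and the conclusion follow immediately. So I do not expect a serious obstacle, as this is essentially the classical "fundamental theorem for compositums."
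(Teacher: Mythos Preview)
Your argument is correct and is exactly the classical one: realize $K$ as the splitting field over $E_1$ of a separable polynomial splitting $E_2$ over $F$, then inject $\Gal(K/E_1)$ into $\Gal(E_2/F)$ via restriction. The paper does not supply a proof of this lemma at all; it is introduced with the sentence ``We will use the following fact in Galois theory'' and left as a standard statement, so your write-up is precisely the kind of justification the paper omits.
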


\begin{lemma}\label{lem-gal-gal} Let $E_2/E_1$ and $E_3/E_2$ be two Galois extensions of degree co-prime to $p$. Let $F/E_1$ be the Galois closure of $E_3/E_1$. Then $p\nmid |F/E_1|$.
\end{lemma}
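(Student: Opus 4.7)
The plan is to realize the Galois closure $F$ as a compositum of conjugates of $E_3$ and then bound its degree over $E_2$, where the hypothesis that $E_2/E_1$ is Galois (and not merely separable) will play the decisive role. Since $E_2/E_1$ and $E_3/E_2$ are both separable, so is $E_3/E_1$; hence there are exactly $n:=[E_3:E_1]$ distinct $E_1$-embeddings $\sigma_1,\dots,\sigma_n$ of $E_3$ into a fixed algebraic closure $\overline{E_1}$ containing $F$, and one has $F=\sigma_1(E_3)\cdots\sigma_n(E_3)$ inside $\overline{E_1}$.

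The key observation is that because $E_2/E_1$ is Galois, each $\sigma_i$ restricts to an automorphism of $E_2$; in particular every conjugate $\sigma_i(E_3)$ contains $E_2$, and $[\sigma_i(E_3):E_2]=[E_3:E_2]$, which is coprime to $p$ by hypothesis. I would then invoke the standard fact (immediate from the primitive element theorem in the separable setting: the minimal polynomial over a larger base divides the one over a smaller base) that for any finite separable extension $K/L$ and any overfield $M\supseteq L$ inside a common algebraic closure, $[KM:M]$ divides $[K:L]$. Iterating this inclusion-by-inclusion over $i=1,\dots,n$ with base field $E_2$, I obtain
\[
[F:E_2]\ \ \text{divides}\ \ \prod_{i=1}^{n}[\sigma_i(E_3):E_2]\ =\ [E_3:E_2]^{n},
\]
which is coprime to $p$. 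Combining with the tower law $[F:E_1]=[F:E_2]\cdot[E_2:E_1]$ and the hypothesis $p\nmid[E_2:E_1]$ yields $p\nmid[F:E_1]$.

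The argument is essentially elementary Galois theory, so there is no serious obstacle; the one point that has to be watched is that the hypothesis ``$E_2/E_1$ is Galois'' is used in an essential way, precisely to ensure that all the conjugates $\sigma_i(E_3)$ share the common intermediate field $E_2$. Without this, one could only bound the compositum degree over $E_1$, giving $[F:E_1]\mid[E_3:E_1]^n$; but since $[E_3:E_1]=[E_3:E_2][E_2:E_1]$ might in principle pick up factors of $p$ from cross terms if $E_2$ were merely separable, that weaker bound would lose the needed coprimality. So the ``Galois of $E_2/E_1$'' hypothesis is what pins the argument down.
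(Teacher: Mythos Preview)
Your overall strategy---realizing $F$ as the compositum of the conjugates $\sigma_i(E_3)$, observing that each contains $E_2$ because $E_2/E_1$ is Galois, and then bounding $[F:E_2]$ by adjoining the conjugates one at a time---is exactly the paper's approach. However, the ``standard fact'' you invoke is false as stated: for a merely \emph{separable} extension $K/L$ it is not true that $[KM:M]$ divides $[K:L]$. Your justification conflates ``the minimal polynomial over $M$ divides the one over $L$'' (which is true) with ``its degree divides the degree'' (which does not follow). A concrete counterexample: take $L=\mathbb{Q}$, $K=\mathbb{Q}(\sqrt[3]{2})$, and $M=\mathbb{Q}(\omega\sqrt[3]{2})$ with $\omega$ a primitive cube root of unity; then $KM$ is the splitting field of $x^3-2$, so $[KM:M]=2$ while $[K:L]=3$.

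The repair is immediate and is precisely what the paper does: each $\sigma_i(E_3)/E_2$ is not just separable but \emph{Galois}, being the image under $\sigma_i$ of the Galois extension $E_3/E_2$ together with $\sigma_i(E_2)=E_2$. For Galois $K/L$ the divisibility $[KM:M]\mid[K:L]$ does hold, and this is exactly Lemma~\ref{lem-gal-comp} in the paper. With this correction your inductive bound on $[F:E_2]$ goes through unchanged, and the rest of your argument is fine. So you had already set up the Galois hypothesis you need; you just cited the wrong lemma for the divisibility step.
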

\begin{proof} Let $E_3=E_1[\alpha]=E_2[\alpha]$.
Then $F=E_3[\sigma_1(\alpha),\cdots, \sigma_n(\alpha)]$ where $\Gal(F/E_1)=\{\sigma_1,\cdots,\sigma_n\}$.
Let $F_i=E_3[\sigma_1(\alpha),\cdots, \sigma_i(\alpha)]$.
We show by induction on $i$ that $|F_i/E_1|$ is co-prime to $p$.
Assume that $F_i\neq F_{i+1}$.
Note that $\sigma_{i+1}(E_3)/\sigma_{i+1}(E_2)$ is Galois and $\sigma_{i+1}(E_2)=E_2$ since $E_2/E_1$ is Galois.
So $\sigma_{i+1}(E_3)/E_2$ is Galois with degree co-prime to $p$.
Note that $\sigma_{i+1}(E_3)=E_2[\sigma_{i+1}(\alpha)]$.
Then $F_{i+1}=F_i\cdot \sigma_{i+1}(E_3)$.
By Lemma \ref{lem-gal-comp}, $F_{i+1}/F_i$ is Galois with degree co-prime to $p$.
So the lemma is proved.
\end{proof}

\begin{lemma}\label{lem-gal-gen} Let $f_1:X_1\to X_2$ and $f_2:X_2\to X_3$ be two finite surjective morphisms of normal varieties.
Let $f:=f_2\circ f_1$.
Suppose $\deg f_1^{\Gal}$ and $\deg f_2^{\Gal}$ are both co-prime to $p$. Then $\deg f^{\Gal}$ is co-prime to $p$.
\end{lemma}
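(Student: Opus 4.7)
The plan is to reduce to a purely field-theoretic statement and apply Lemmas \ref{lem-gal-comp} and \ref{lem-gal-gal} after taking an appropriate compositum. Setting $L_1 := k(X_3) \subset L_2 := k(X_2) \subset L_3 := k(X_1)$ inside a fixed algebraic closure, the task becomes showing that the Galois closure of $L_3/L_1$ has degree coprime to $p$, since this degree equals $\deg f^{\Gal}$.

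First, I would introduce $M$, the Galois closure of $L_2/L_1$, and $N$, the Galois closure of $L_3/L_2$. By hypothesis, $[M:L_1]=\deg f_2^{\Gal}$ and $[N:L_2]=\deg f_1^{\Gal}$ are both coprime to $p$. Since $N/L_2$ is Galois and $L_2\subset M$, Lemma \ref{lem-gal-comp} yields that $M\cdot N/M$ is Galois with $[M\cdot N:M]$ dividing $[N:L_2]$, hence still coprime to $p$.

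Now apply Lemma \ref{lem-gal-gal} to the two-step Galois tower $L_1 \subset M \subset M\cdot N$, whose relative degrees are both coprime to $p$: the Galois closure $F$ of $M\cdot N / L_1$ satisfies $p\nmid [F:L_1]$. Since $L_3 \subset M\cdot N \subset F$ and $F/L_1$ is Galois, $F$ contains the Galois closure of $L_3/L_1$. Therefore $\deg f^{\Gal}$ divides $[F:L_1]$, and the conclusion follows.

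No step in this plan presents a serious obstacle; the only subtlety is choosing the right intermediate object. The key point is that although $L_3/L_2$ is not assumed Galois, its Galois closure $N$ provides (after base-change by $M$) the second rung of a tower of Galois extensions of coprime-to-$p$ relative degrees to which Lemma \ref{lem-gal-gal} directly applies, and one then uses the general fact that the Galois closure of an intermediate subfield sits inside any ambient Galois extension containing it.
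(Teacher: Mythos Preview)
Your proposal is correct and follows essentially the same route as the paper: pass to field theory, form the compositum of the two Galois closures, use Lemma~\ref{lem-gal-comp} to see $M\cdot N/M$ is Galois of degree coprime to $p$, then apply Lemma~\ref{lem-gal-gal} to the tower $L_1\subset M\subset M\cdot N$. The only cosmetic difference is that the paper first reduces to the case where $f_1$ itself is Galois (equivalently $N=L_3$), whereas you keep $N$ explicit throughout; your final containment argument plays the same role as the paper's remark that the Galois closure of $E_3\cdot K_1/K_3$ coincides with that of $K_1/K_3$.
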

\begin{proof} Clearly, we may assume $f_1$ is Galois.
Let $K_i$ be the function field of $X_i$.
Then $K_1/K_2$ is Galois.
Let $E_3/K_3$ be the Galois closure of $K_2/K_3$ which has degree co-prime to $p$.
Since $K_1/K_2$ is Galois with degree co-prime to $p$, $E_3\cdot K_1/E_3$ is Galois with degree co-prime to $p$ by Lemma \ref{lem-gal-comp}.
Note that the Galois closure of $E_3\cdot K_1/K_3$ is the Galois closure of $K_1/K_3$.
So the lemma follows from Lemma \ref{lem-gal-gal}.
\end{proof}

\section{Proof of Theorem \ref{thm-pe}}

We first generalize \cite[Proposition 2.9]{MZ} to the following; see \cite[Definition 2.6]{MZ} for the notation and symbols involved below.

\begin{proposition}\label{prop-fx-x}
Let $\varphi:V\to V$ be an invertible linear map of a positive dimensional real normed vector space $V$.
Assume $\varphi^{\pm1}(C)=C$ for a convex cone $C\subseteq V$ such that $C$ spans $V$ and its closure $\overline{C}$ contains no line.
Let $q$ be a positive number. Then the conditions (i) and (ii) below are equivalent.
\begin{itemize}
\item[(i)] $\varphi(u)=q u$ for some $u\in C^\circ$ (the interior part of $C$).
\item[(ii)]
There exists a constant $N>0$, such that $\frac{||\varphi^i||}{q^i}< N$ for all $i\in \mathbb{Z}$.
\end{itemize}
Assume further the equivalent conditions (i) and (ii). Then the following are true.
\begin{itemize}
\item[(1)] $\varphi$ is a diagonalizable linear map with all eigenvalues of modulus $q$.
\item[(2)] Suppose $q>1$. Then, for any $v\in V$ such that $\varphi(v)-v\in C$, we have $v\in C$.
\end{itemize}
\end{proposition}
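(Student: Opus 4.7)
The plan is to prove the two implications via spectral analysis combined with an averaging argument on a compact group, read off (1) along the way, and derive (2) from a telescoping series identity.

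For (i) $\Rightarrow$ (ii), I treat the eigenvector $u \in C^\circ$ as an order unit: since $C^\circ$ is open, for each $v \in V$ there is $t = O(\|v\|)$ with $tu \pm v \in \overline{C}$, and hence $tq^i u \pm \varphi^i(v) = \varphi^i(tu \pm v) \in \overline{C}$ for all $i \ge 0$. Because $\overline{C}$ is salient, its dual cone contains a basis of linear functionals $\ell_j$ strictly positive on $\overline{C}\setminus\{0\}$; evaluating the above inclusions against each $\ell_j$ and combining across the basis yields $\|\varphi^i(v)\| \le Mq^i \|v\|$. The case $i<0$ is symmetric since $\varphi^{-1}(u)=u/q \in C^\circ$ and $\varphi^{-1}(\overline{C})=\overline{C}$.

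For (ii) $\Rightarrow$ (i) and (1), set $T:=\varphi/q$. Uniform boundedness of $\|T^i\|$ on $\mathbb{Z}$ forces both $\rho(T)\le 1$ and $\rho(T^{-1})\le 1$, so every eigenvalue of $T$ has modulus exactly one; a non-trivial Jordan block would produce polynomial growth in $\|T^i\|$, so $T$ is diagonalizable over $\mathbb{C}$, which gives (1). Next, I endow $V$ with the equivalent norm $\|v\|_\ast := \sup_{i\in\mathbb{Z}}\|T^iv\|$ making $T$ an isometry, and let $G\subseteq \GL(V)$ be the closure of $\langle T\rangle$, which is a compact abelian group of linear isometries preserving $C$, $\overline{C}$ and $C^\circ$. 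Averaging against Haar probability measure $\mu$ on $G$ gives the $\mathbb{R}$-linear projection $P:V\to F:=\Ker(T-I)$, equal to the Ces\`aro limit $\lim_N \frac{1}{N}\sum_{i=0}^{N-1} T^i$. For any $u_0 \in C^\circ$, the orbit $Gu_0 \subseteq C^\circ$ and $P(u_0)$ lies in the closed convex hull of $Gu_0$, hence in $\overline{C}$. The decisive step is upgrading this to $P(u_0)\in C^\circ$: if not, pick a nonzero supporting linear functional $\ell \ge 0$ on $\overline{C}$ with $\ell(P(u_0))=0$; since $\ell \ge 0$ on the open set $C^\circ$ and $\ell$ is linear, vanishing at any interior point would force $\ell \equiv 0$, so $\ell > 0$ on $C^\circ$; then $\ell(gu_0)>0$ uniformly in $g\in G$ contradicts $\int_G \ell(gu_0)\,d\mu = \ell(P(u_0)) = 0$. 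Hence $u := P(u_0) \in C^\circ$ satisfies $\varphi(u)=qu$.

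For (2), set $w:=\varphi(v)-v \in C$. Iterating $\varphi^{-1}(v) = v - \varphi^{-1}(w)$ gives $v = \varphi^{-i}(v) + \sum_{j=1}^{i} \varphi^{-j}(w)$, and (ii) with $q>1$ yields $\|\varphi^{-i}(v)\|\le Nq^{-i}\|v\|\to 0$ together with absolute convergence $\sum_j \|\varphi^{-j}(w)\| \le N\|w\|/(q-1)$. So $v=\sum_{j\ge 1} \varphi^{-j}(w)$ as a norm-convergent series whose partial sums lie in the cone $C=\varphi^{-j}(C)$. When $C$ is closed (as for $\PEC(X)$ in the applications here) this directly places $v$ in $C$; in full generality I upgrade by perturbing $w$ to $w+\epsilon(q-1)u \in C^\circ$ using the interior eigenvector $u$ from (i), rerunning the series to put $v+\epsilon u \in C^\circ$, and then passing $\epsilon \to 0^+$.

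I expect the main technical difficulty to be the supporting-functional step that promotes $P(u_0) \in \overline{C}$ to $P(u_0) \in C^\circ$: it relies crucially on both the no-line property of $\overline{C}$ (to supply a nonzero supporting functional $\ell \ge 0$) and the open-interior property of $C^\circ$ (to force any such nonzero $\ell$ to be strictly positive on $C^\circ$). Everything else reduces to standard finite-dimensional spectral theory and convex analysis.
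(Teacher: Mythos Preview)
Your treatment of the equivalence (i)$\Leftrightarrow$(ii) and of (1) via spectral analysis and Haar averaging is correct and more self-contained than the paper, which simply cites \cite[Proposition 2.9]{MZ} for these parts.

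However, your proof of (2) has a genuine gap. First, the parenthetical ``as for $\PEC(X)$ in the applications here'' is mistaken: in the proof of Theorem~\ref{thm-pe} the cone $C$ is the cone of classes of \emph{effective} $\R$-Cartier divisors, whose closure is $\PEC(X)$; the two differ in general, and the entire content of Theorem~\ref{thm-pe} over \cite[Theorem C]{BFF} is precisely the upgrade from pseudo-effective to numerically effective. Second, in the non-closed case your perturbation only yields $v+\epsilon u\in C^\circ$ for every $\epsilon>0$, and letting $\epsilon\to 0^+$ gives merely $v\in\overline{C}$, not $v\in C$. The step ``rerunning the series to put $v+\epsilon u\in C^\circ$'' is itself fine (write $v+\epsilon u=\varphi^{-1}(w')+\text{tail}$ with the first term in $C^\circ$ and the tail in $\overline{C}$, then use $C^\circ+\overline{C}\subseteq C^\circ$), but the limit in $\epsilon$ loses the interior.

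The paper closes this gap by a different idea: it passes to the subspace $W$ spanned by the orbit $\{\varphi^{-i}(w)\}_{i\in\Z}$ and the subcone $E\subseteq C$ it generates. The point is that for $m$ large the finitely generated cone $E_m$ already spans $W$, so the finite partial sum $s_m=\sum_{i=1}^m\varphi^{-i}(w)$, being a strictly positive combination of a spanning set of generators, lies in $E_m^\circ\subseteq\overline{E}^\circ$ (interior taken in $W$). Then $v=s_m+(\text{tail in }\overline{E})\in\overline{E}^\circ=E^\circ\subseteq E\subseteq C$, with no limiting process in an auxiliary parameter. Your telescoping identity is the same as the paper's, but the decisive step---producing an interior point from a \emph{finite} sum inside a suitable subcone---is missing from your argument.
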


\begin{proof} Note that $\varphi^{\pm1}(C)=C$ implies $\varphi^{\pm1}(\overline{C})=\overline{C}$ and $\overline{C}^\circ=C^\circ$ since $C$ is a convex cone. So (i) and (ii) are equivalent and (1) is true by \cite[Proposition 2.9]{MZ}.

Assume the equivalent conditions (i) and (ii) and $q>1$.
Suppose $e:=\varphi(v)-v\in C$.
If $e=0$, then $v=0$ by (1) and since $q>1$.
Next we assume $e\neq 0$.

For $m\ge 1$, let $E_m$ be the convex cone generated by $\{\varphi^{-1}(e), \cdots, \varphi^{-m}(e)\}$.
Let $E_\infty$ be the convex cone generated by $\{\varphi^{-i}(e)\}_{i\ge 1}$.
Let $E$ be the convex cone generated by $\{\varphi^{-i}(e)\}_{i\in \mathbb{Z}}$.
Then all the above cones are subcones of $C$.
Note that $\varphi^{\pm}(E)=E$ and $\varphi^{-1}(E_\infty)\subseteq E_\infty$.
Let $W$ be the vector space spanned by $E$.
Since $e\neq 0$, $\dim(W)>0$.

We claim that $E_\infty$ spans $W$.
Let $W'$ be the vector space spanned by $E_\infty$.
Then $\varphi^{-1}(W')\subseteq W'$ and hence $\varphi^{\pm}(W')= W'$ since $W'$ is finite dimensional and $\varphi$ is invertible.
In particular, $\varphi^i(e)\in W'$ for any $i\in \mathbb{Z}$ and hence $W\subseteq W'$.
So the claim is proved.

Now we may assume $E_m$ spans $W$ for $m\gg 1$.
This implies $E_m^\circ\subseteq \overline{E}^\circ$.
Therefore, $s_m:=\sum\limits_{i=1}^m \varphi^{-i}(e)\in E_m^{\circ}\subseteq \overline{E}^{\circ}$.
Note that $\lim\limits_{n\to +\infty} \varphi^{-n}(v)= 0$ by (ii).
Then $v=\lim\limits_{n\to +\infty} v-\varphi^{-n}(v)=\lim\limits_{n\to +\infty}\sum\limits_{i=1}^n \varphi^{-i}(e)=s_m+\lim\limits_{n\to +\infty}\sum\limits_{i={m+1}}^n \varphi^{-i}(e)\in \overline{E}^\circ=E^{\circ}$.
In particular, $v\in E\subseteq C$.
\end{proof}

\begin{proof}[Proof of Theorem \ref{thm-pe}]
By the ramification divisor formula, $f^*(-K_X)-(-K_X)=R_f$.
Let $V$ be the real vector space $\N^{1}(X)$ with a fixed norm.
Let $C$ be the cone of classes of effective $\R$-Cartier divisors in $\N^1(X)$.
Then $C$ spans $V$ and its closure $\overline{C}=\PEC(X)$ contains no line.
Let $v=[-K_X]$ in $\N^1(X)$ and $\varphi=f^*|_{\N^1(X)}$.
Then $\varphi^{\pm}(C)=C$ and $\varphi(v)-v\in C$.
Since $f$ is polarized, $\varphi(u)=qu$ for some $u\in C^\circ$ and $q>1$.
Applying Proposition \ref{prop-fx-x} for the above settings, we have $v\in C$.
So $-K_X\equiv D$ for some effective $\Q$-Cartier divisor $D$.

Suppose $\Alb(X)=\Pic^0(\Pic^0(X)_{\red})$ is trivial.
Then so is $\Pic^0(X)_{\red}$. Hence $-K_X\sim_{\Q} D$ and $\kappa(X,-K_X)\ge 0$.
\end{proof}

\begin{remark} In Theorem \ref{thm-pe}, one can remove the assumption ``$\Q$-Gorenstein"  and still assert that $-K_X$ is weakly numerically equivalent to some effective Weil $\Q$-divisor (cf.~\cite[Definition 2.2]{MZ}).
Indeed, the same proof above works by applying Proposition \ref{prop-fx-x} to the cone generated by the classes of effective Weil $\R$-divisors in $\N_{n-1}(X)$ (cf.~\cite[Definitions 2.2 and 2.4]{MZ}).
\end{remark}

\section{$K_X$ pseudo-effective case}

We first recall a useful result of \cite[Lemma 2.1]{Na-Zh} or \cite[Proposition 2.9]{MZ}.
\begin{lemma}\label{lem-eigen} Let $f:X\to X$ be a numerically $q$-polarized endomorphism of a normal projective variety $X$.
Suppose $f^*D\equiv aD$ (numerical equivalence) for some $\R$-Cartier divisor $D$ and $a\in \R$.
Then either $|a|=q$ or $D\equiv 0$.
\end{lemma}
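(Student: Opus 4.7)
The plan is to reduce the claim to a direct application of Proposition \ref{prop-fx-x} to the linear map $\varphi := f^\ast|_{\N^1(X)}$, using the nef cone (or equivalently $\PEC(X)$) as the invariant convex cone.

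First I would verify the hypotheses of Proposition \ref{prop-fx-x} with $V := \N^1(X)$, $C := \Nef(X)$, and the ample class $[H] \in C^\circ$. The cone $\Nef(X)$ spans $\N^1(X)$ and its closure contains no line (it is a strictly convex cone with nonempty interior). Since $f$ is surjective, pullback preserves nefness, so $\varphi(C) \subseteq C$; and because $f_\ast f^\ast = (\deg f)\,\id$ on $\N^1(X)$, the inverse $\varphi^{-1} = \frac{1}{\deg f}\, f_\ast$ also preserves $C$ by the projection formula, yielding $\varphi^{\pm 1}(C) = C$. The hypothesis $f^\ast H \equiv qH$ gives $\varphi([H]) = q[H]$ with $[H] \in C^\circ$, so condition (i) of Proposition \ref{prop-fx-x} holds.

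Next I would invoke Proposition \ref{prop-fx-x}(1) to conclude that $\varphi$ is diagonalizable (over $\C$) with every eigenvalue of modulus $q$. Now assume $D \not\equiv 0$, so $[D] \in \N^1(X)$ is a nonzero real eigenvector of $\varphi$ with eigenvalue $a \in \R$. Thus $a$ is a real eigenvalue of $\varphi$, and since all eigenvalues have absolute value $q$, necessarily $a = \pm q$, i.e.\ $|a| = q$, as desired.

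There is no serious obstacle here; the only subtle point is to be sure that $\varphi^{-1}$ preserves the chosen cone (handled via the projection formula) and that a real eigenvector of a $\C$-diagonalizable map corresponds to a real eigenvalue, which then must have modulus $q$ by the conclusion of Proposition \ref{prop-fx-x}.
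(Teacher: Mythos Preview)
Your proposal is correct and follows the intended route: the paper does not prove this lemma but simply cites \cite[Lemma 2.1]{Na-Zh} or \cite[Proposition 2.9]{MZ}, and your argument via Proposition~\ref{prop-fx-x}(1) (which generalizes \cite[Proposition 2.9]{MZ}) is precisely how those references establish the claim. One cosmetic remark: $\Nef(X)$ is already closed, so ``its closure contains no line'' just means $\Nef(X)$ itself contains no line, which holds because a class that is both nef and anti-nef is numerically trivial.
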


The following remark on Lemma \ref{lem-eigen} is an answer to a question by the referee.

\begin{remark}
It can happen that $D \not\equiv 0$ and $a = -q$ in Lemma \ref{lem-eigen}. For example, let $X:=\mathbb{P}^1\times \mathbb{P}^1$ and let $f:X\to X$ via $f([a:b],[c:d])=([c^2:d^2],[a^2:b^2])$.
Denote by $p_1: X\to \mathbb{P}^1$ and $p_2: X\to \mathbb{P}^1$ the two natural projections. 
Let $H_1:=p_1^*H$ and $H_2:=p_2^*H$ where $H:=[1:0]\in \mathbb{P}^1$.
Then $\text{Pic}(X)=\mathbb{Z}H_1\oplus \mathbb{Z}H_2$, $H_1+H_2$ is ample, and $f^*(H_1+H_2)=2H_2+2H_1$.
So $f$ is $2$-polarized.
Note that $f^*(H_1-H_2)=2H_2-2H_1=-2(H_1-H_2)$ and $H_1-H_2\not\equiv 0$.
So it is possible that the eigenvalue is negative.
\end{remark}

\begin{lemma}\label{lem-pe-qe} Let $f:X\to X$ be a numerically $q$-polarized separable endomorphism of an $n$-dimensional $\Q$-Gorenstein normal projective variety $X$. Then the following are equivalent.
\begin{itemize}
\item[(1)] $f$ is quasi-\'etale.
\item[(2)] $K_X \equiv f^*K_X$.
\item[(3)] $K_X\equiv 0$.
\item[(4)] $K_X$ is pseudo-effective.
\end{itemize}
\end{lemma}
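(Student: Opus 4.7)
The lemma is essentially an assembly of the results already at hand, so the plan is to prove the cycle $(1) \Rightarrow (2) \Rightarrow (3) \Rightarrow (4) \Rightarrow (1)$. The backbone is the ramification divisor formula of Proposition \ref{prop-rdf}: since $f$ is separable one has $K_X = f^*K_X + R_f$ with $R_f \ge 0$, and because $X$ is $\Q$-Gorenstein, $f^*K_X$ is $\Q$-Cartier and hence so is $R_f$, an effective $\Q$-Cartier divisor.

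The implication $(1) \Rightarrow (2)$ is immediate, since quasi-\'etaleness is by definition $R_f = 0$. For $(2) \Rightarrow (3)$ I would apply Lemma \ref{lem-eigen} with $D = K_X$ and eigenvalue $a = 1$; since the polarization constant satisfies $q > 1$, the alternative $|a| = q$ fails, and the lemma forces $K_X \equiv 0$. The step $(3) \Rightarrow (4)$ is trivial, as the zero class lies in $\PEC(X)$.

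The only step with real content is $(4) \Rightarrow (1)$. Here I would invoke Theorem \ref{thm-pe} (whose proof uses only the numerical class of the polarizing divisor, hence applies directly to our numerically $q$-polarized $f$; alternatively, one could first upgrade numerical to linear polarization via Theorem \ref{thm-num-lin}) to obtain an effective $\Q$-Cartier divisor $E$ with $-K_X \equiv E$. Thus $-K_X \in \PEC(X)$, and together with the standing assumption $K_X \in \PEC(X)$ and the fact that the pseudo-effective cone contains no line, this forces $K_X \equiv 0$. Consequently $R_f = K_X - f^*K_X \equiv 0$; since $R_f$ is an effective $\Q$-Cartier divisor, pairing with $H^{n-1}$ for any ample Cartier $H$ gives $H^{n-1}\cdot R_f = 0$, and as each prime component of $R_f$ pairs strictly positively with $H^{n-1}$ (cut down by $n-1$ general hyperplane sections to a curve meeting the support), one concludes $R_f = 0$, i.e., $f$ is quasi-\'etale. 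I do not anticipate any real obstacle here; the lemma is close to a formal consequence of Theorem \ref{thm-pe}, Lemma \ref{lem-eigen} and the ramification formula, the only modest subtlety being the passage between numerical and linear polarization when quoting Theorem \ref{thm-pe}.
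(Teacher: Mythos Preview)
Your argument is correct. The implications $(1)\Rightarrow(2)\Rightarrow(3)\Rightarrow(4)$ match the paper exactly; the difference is in $(4)\Rightarrow(1)$. The paper gives a direct one-line computation: intersecting the ramification formula $K_X=f^*K_X+R_f$ with $(f^*H)^{n-1}\equiv q^{n-1}H^{n-1}$ and using $\deg f=q^n$ yields $(q-1)K_X\cdot H^{n-1}+R_f\cdot H^{n-1}=0$; since $K_X$ is pseudo-effective and $R_f\ge 0$, both terms are nonnegative and hence vanish, giving $R_f=0$. Your route instead imports Theorem \ref{thm-pe} to get $-K_X$ pseudo-effective, then uses that $\PEC(X)$ contains no line to force $K_X\equiv 0$ and hence $R_f\equiv 0$. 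This is exactly the alternative the authors record in the Remark immediately following the proof. The paper's argument is more elementary and self-contained (avoiding the convex-cone machinery of Proposition \ref{prop-fx-x} underlying Theorem \ref{thm-pe}); yours is shorter once Theorem \ref{thm-pe} is in hand and, as you correctly observe, there is no circularity since the proof of Theorem \ref{thm-pe} uses only the numerical class of the polarizing divisor.
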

\begin{proof}
For (2) $\Rightarrow$ (3), we refer to Lemma \ref{lem-eigen}.

We then only need to show (4) $\Rightarrow$(1).
Suppose that $f^\ast H \equiv qH$ for some ample Cartier divisor $H$ of $X$ and $q>1$.
Using $(f^{\ast}H)^{n-1}=f^{\ast}H\cdots f^{\ast}H$ to intersect both sides of the ramification divisor formula $K_X=f^{\ast}K_X+R_f$, we obtain
$$(q-1)K_X\cdot H^{n-1}+R_f\cdot H^{n-1}=0.$$
Since $K_X$ is pseudo-effective and $R_f$ is effective, $K_X\cdot H^{n-1}=R_f\cdot H^{n-1}=0$ and hence  $R_f=0$. So (1) is proved.
\end{proof}

\begin{remark} Alternatively, by Theorem \ref{thm-pe}, $-K_X$ is pseudo-effective and hence Lemma \ref{lem-pe-qe} is straightforward.
\end{remark}


\begin{lemma}\label{app-lem-k-eff} Let $f:X\to X$ be a numerically $q$-polarized separable endomorphism of a normal projective variety $X$. Suppose $mK_X$ is effective for some integer $m>0$.
Then $X$ is $\Q$-Gorenstein and $f$ is quasi-\'etale; precisely, $mK_X\sim 0$.
\end{lemma}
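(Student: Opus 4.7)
The plan is to mimic the intersection-theoretic argument from the proof of Lemma \ref{lem-pe-qe}, but carefully working with Weil divisors since $X$ is not a priori $\Q$-Gorenstein. First, I would write $mK_X \sim D$ for some effective Weil divisor $D \ge 0$. Since $f$ is separable, Proposition \ref{prop-rdf} gives the ramification formula $K_X = f^*K_X + R_f$ as Weil divisors, where $f^*$ denotes the natural finite pullback on Weil divisors. This pullback preserves linear equivalence of Weil divisors on normal varieties, so $m f^* K_X \sim f^* D$, and combining yields
\[
D \;\sim\; mK_X \;=\; m f^* K_X + m R_f \;\sim\; f^* D + m R_f.
\]

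Next I would fix an ample Cartier divisor $H$ with $f^*H \equiv qH$ and intersect the displayed relation with $H^{n-1}$, where $n = \dim X$. The projection formula for finite surjective morphisms of normal varieties gives
\[
f^* D \cdot (f^* H)^{n-1} \;=\; (\deg f)\, D \cdot H^{n-1} \;=\; q^n\, D \cdot H^{n-1},
\]
and since $f^* H - qH$ is numerically trivial, $f^* D \cdot (f^* H)^{n-1} = q^{n-1}\, f^* D \cdot H^{n-1}$. Together these yield $f^* D \cdot H^{n-1} = q \, D \cdot H^{n-1}$, so
\[
(q-1)\, D \cdot H^{n-1} \;+\; m\, R_f \cdot H^{n-1} \;=\; 0.
\]
With $q > 1$, $D$ and $R_f$ effective, and $H$ ample, both summands are nonnegative, hence vanish. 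Because any nonzero effective Weil divisor meets $H^{n-1}$ positively, this forces $D = 0$ and $R_f = 0$.

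Consequently $mK_X \sim 0$, which exhibits $mK_X$ as a principal (hence Cartier) divisor, so $K_X$ is $\Q$-Cartier and $X$ is $\Q$-Gorenstein; moreover $R_f = 0$ means $f$ is quasi-\'etale. The step requiring the most care is the projection-formula and numerical-equivalence bookkeeping with $D$ being a Weil (rather than Cartier) divisor; this is standard intersection theory on normal projective varieties, but one must be careful not to implicitly assume $K_X$ itself is $\Q$-Cartier before that has been established.
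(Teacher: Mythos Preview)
Your proof is correct and follows essentially the same route as the paper's: intersect the ramification divisor formula with $(f^*H)^{n-1}$, use the projection formula together with $f^*H\equiv qH$ to obtain $(q-1)K_X\cdot H^{n-1}+R_f\cdot H^{n-1}=0$, and conclude from effectivity that both terms vanish so that $mK_X\sim 0$ and $R_f=0$. The only difference is cosmetic---you pass to $D\sim mK_X$ and spell out the Weil-divisor intersection bookkeeping, whereas the paper works with $K_X$ directly.
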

\begin{proof} Suppose $f^*H\equiv qH$ for some ample Cartier divisor $H$.
Using $(f^{\ast}H)^{n-1}=f^{\ast}H\cdots f^{\ast}H$ to intersect both sides of the ramification divisor formula $K_X=f^{\ast}K_X+R_f$, we obtain
$$(q-1)K_X\cdot H^{n-1}+R_f\cdot H^{n-1}=0.$$
Then $K_X\cdot H^{n-1}=0$ and $R_f=0$.
Since $mK_X$ is effective, $mK_X\sim 0$.
\end{proof}

We generalize \cite[Theorem 3.3]{Na-Zh} to the case of positive characteristic as follows:

\begin{theorem}\label{thm-FQ}
Let $X$ be a strongly F-regular $\Q$-Gorenstein normal projective variety over the field $k$ of characteristic $p>0$.
Let $f:X\to X$ be a polarized separable endomorphism. Suppose $\dim(X)>1$ and $K_X$ is pseudo-effective.
Then there is a quasi-\'etale cover $\pi:V\to X$ such that:
\begin{itemize}
\item[(1)] $V$ is smooth; and
\item[(2)] $c_1(V)=0$ and $c_2(V)\cdot A_V^{n-2}=0$ for some ample Cartier divisor $A_V$ of $V$.
\end{itemize}
In particular,  if $X$ is a surface and $p>3$, then $V$ is $Q$-abelian and hence $X$ is $Q$-abelian.
\end{theorem}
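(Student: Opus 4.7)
The plan is to combine Lemma \ref{lem-pe-qe} (which already supplies $K_X\equiv 0$ and $f$ quasi-\'etale under these hypotheses) with the existence of a smooth quasi-\'etale cover coming from strong F-regularity, and then extract the vanishing of $c_2(V)\cdot A_V^{n-2}$ from the pullback action of the lifted polarized endomorphism on the tangent bundle of the cover.

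First, I would apply Lemma \ref{lem-pe-qe} to $f$, obtaining $K_X\equiv 0$ and $f$ quasi-\'etale. To build $\pi:V\to X$, I would appeal to the known finiteness of local \'etale fundamental groups for strongly F-regular singularities (work of Carvajal-Rojas, Schwede and Tucker and its extensions): this produces a finite quasi-\'etale Galois cover $\pi:V\to X$ with $V$ smooth. Since $\pi$ is \'etale in codimension one, the ramification formula yields $K_V=\pi^{\ast}K_X\equiv 0$, i.e.\ $c_1(V)=0$, which proves (1) and the first half of (2).

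Next I would lift $f$ to $V$ after passing to a positive power. Because $\pi$ is Galois and $f$ is quasi-\'etale, $\pi\circ f$ is a quasi-\'etale cover of the same topological type as $\pi$, so after replacing $f$ by a suitable power one obtains a polarized separable endomorphism $g:V\to V$ satisfying $\pi\circ g=f\circ\pi$. With $A_V:=\pi^{\ast}H$ for the polarizing ample $H$ on $X$, one has $g^{\ast}A_V\equiv qA_V$ and $\deg g=q^{n}$; since $g$ is \'etale on the smooth $V$, $g^{\ast}c_2(V)=c_2(V)$, and the projection formula gives
\[
q^{n}\bigl(c_2(V)\cdot A_V^{n-2}\bigr)=g^{\ast}\bigl(c_2(V)\cdot A_V^{n-2}\bigr)=q^{n-2}\bigl(c_2(V)\cdot A_V^{n-2}\bigr),
\]
forcing $c_2(V)\cdot A_V^{n-2}=0$ since $q>1$, which completes (2).

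For the surface case ($n=2$, $p>3$) one then has a smooth projective surface $V$ with $K_V\equiv 0$ and $c_2(V)=0$. The Bombieri--Mumford classification in characteristic $p>3$ excludes K3 ($c_2=24$) and Enriques ($c_2=12$), so $V$ must be abelian or (bi)elliptic, and in both cases is $Q$-abelian; hence $X$ is $Q$-abelian via $\pi$. The hard part will be step two: constructing the smooth quasi-\'etale cover in positive characteristic. In characteristic zero this follows from the Greb--Kebekus--Peternell theorem that klt singularities have finite local \'etale fundamental groups, and one has to replace this with the positive-characteristic substitute for strongly F-regular singularities, together with checking compatibility of $\pi$ with lifting $f$ (which is why the conclusion is reached only after replacing $f$ by a positive power).
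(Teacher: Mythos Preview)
Your outline matches the paper's approach in its broad strokes: invoke Lemma~\ref{lem-pe-qe} to get $K_X\equiv 0$ and $f$ quasi-\'etale, use the finiteness result of \cite{BCGST} as the positive-characteristic replacement for Greb--Kebekus--Peternell, and then extract $c_2(V)\cdot A_V^{n-2}=0$ from an \'etale pullback computation; the surface conclusion via Bombieri--Mumford is exactly what the paper does. So the ingredients and the skeleton are right.

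There is, however, a genuine imprecision in your step two that is worth flagging. You phrase the construction of $V$ as though the local finiteness of \'etale fundamental groups for strongly $F$-regular singularities directly produces a global smooth quasi-\'etale cover of $X$, \emph{independently of $f$}, and then you try to lift $f$ afterwards by an ad hoc ``same topological type'' argument. Neither half works as stated: local finiteness alone does not globalize to a smooth cover, and there is no evident reason your Galois cover $\pi$ should be compatible with $f\circ\pi$. The argument the paper invokes (that of \cite[Theorem 3.3]{Na-Zh}) runs the other way: one takes $V_k\to X$ to be the Galois closure of $f^k$, which is automatically quasi-\'etale because $f$ is, and then the Main Theorem of \cite{BCGST} forces $V_k$ to be smooth for $k\gg 1$ (the tower must locally saturate a finite group). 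The endomorphism $f$ is therefore built into the construction of the cover; one then does not lift $f$ to a single endomorphism of $V=V_k$, but rather uses the two \'etale maps $g_k,h_k:V_{k+1}\to V_k$ of different degrees (cf.\ \cite[Lemma~2.5]{Na-Zh}) to run the $c_2$ computation, exactly in the spirit of your displayed equation. Your projection-formula calculation is correct once you have such \'etale maps; the point is only that the cover and the maps come as a package from the Galois closures of powers of $f$, not from two independent steps.
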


\begin{proof} By Lemma \ref{lem-pe-qe}, $f$ is quasi-\'etale and $K_X\equiv 0$. By \cite[Main Theorem]{BCGST}, the claim in the proof of \cite[Theorem 3.3]{Na-Zh}
holds true for any characteristic. Therefore the same argument in the proof of \cite[Theorem 3.3]{Na-Zh} works to conclude (1) and (2).

If $X$ is a surface and $p>3$, (1) and (2) imply that $V$ is either an abelian surface or a hyperelliptic surface by the classification of surfaces (cf.~\cite{BM}).
In particular, $V$ is $Q$-abelian and so is $X$.
\end{proof}

\begin{remark} In the case of characteristic 0, Theorem \ref{thm-FQ} (1) and (2) imply $V$ is $Q$-abelian (cf.~\cite{Be} and \cite{Ya}).
\end{remark}

%
%
%
%
\section{Proof of Theorems \ref{ap-cor-dual} and \ref{thm-torsion}}\label{sec-app}

Let $f:X\to X$ be a surjective endomorphism of a normal projective variety $X$ over $k$.
Denote by
$$\Alb(X):=\Pic^0(\Pic^0(X)_{\red})$$
which is an abelian variety.  Then there is a canonical morphism $\alb_X:X\to \Alb(X)$ such that:
$\alb_X(X)$ generates $\Alb(X)$ and for every morphism $\varphi:X\to A$ from $X$ to an abelian variety $A$,
there exists a unique morphism $\psi:\Alb(X)\to A$ such that $\varphi = \psi\circ \alb_X$
(cf. \cite[Remark 9.5.25]{FGI}).

In the birational category, there exists an abelian variety $\mathfrak{Alb}(X)$ together with a rational map $\mathfrak{alb}_X:X\dashrightarrow \mathfrak{Alb}(X)$ such that: $\mathfrak{alb}_X(X)$ generates $\mathfrak{Alb}(X)$ and for every rational map $\varphi:X\dashrightarrow A$ from $X$ to an abelian variety $A$,
there exists a unique morphism $\psi:\mathfrak{Alb}(X)\to A$ such that $\varphi = \psi\circ \mathfrak{alb}_X$ (cf.~\cite[Chapter II.3]{Lang}).
If $\mathfrak{alb}_X$ is a morphism, then $\mathfrak{alb}_X$ and $\alb_X$ are the same.

By the above two universal properties, $f$ descends to surjective endomorphisms on $\Alb(X)$ and $\mathfrak{Alb}(X)$.

The purpose of this section is to prove Theorems \ref{ap-cor-dual}, \ref{thm-torsion} and the following:

\begin{theorem}\label{thm-num-lin}
Let $f:X\to X$ be a numerically polarized separable endomorphism of a normal projective variety
over the field $k$. Then $f$ is polarized.
\end{theorem}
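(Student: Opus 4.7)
Write $L := f^{\ast}H - qH \in \Pic(X)$. Since $f^{\ast}$ preserves numerical equivalence, $L$ belongs to the subgroup $\Pic^{\tau}(X)=\{D\in\Pic(X):D\equiv 0\}$ of numerically trivial classes. We aim to produce $N\in\Pic^{\tau}(X)$ so that, after replacing $H$ by a suitable positive multiple, the Cartier divisor $H' := H + N$ satisfies $f^{\ast}H' \sim qH'$. Since $H'\equiv H$ and ampleness is numerical, $H'$ will automatically be ample, and $f$ will be $q$-polarized by $H'$.

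The quotient $\Pic^{\tau}(X)/\Pic^{0}(X)$ is the torsion subgroup of $\NS(X)$ and hence finite, and $\Pic^{0}(X)(k)=\Pic^{0}(X)_{\red}(k)$ because $k$-points do not see non-reducedness. Replacing $H$ by $mH$ (for $m$ a multiple of the order of $\Pic^{\tau}(X)/\Pic^{0}(X)$) rescales $L$ to $mL$, which therefore lies in $A^{\vee}(k)$, where $A^{\vee} := \Pic^{0}(X)_{\red}$ is an abelian variety. So we may assume $L\in A^{\vee}(k)$. The pullback $f^{\ast}$ then restricts to an endomorphism $\hat{f}:A^{\vee}\to A^{\vee}$.

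The core step is to show that $\hat{f} - q\cdot\id_{A^{\vee}}$ is an isogeny, hence surjective on $k$-points. By Theorem \ref{ap-cor-dual}(1), the endomorphism $g$ induced by $f$ on $A:=\Alb(X)$ is $q$-polarized by some ample $H_A$. From $g^{\ast}H_A\sim qH_A$ one deduces, with respect to the Rosati involution $\dagger$ attached to $H_A$, that $g^{\dagger}\circ g = q\cdot\id$ in $\operatorname{End}(A)\otimes\Q$; the classical positivity of Rosati, valid over any algebraically closed field (Mumford), then forces every eigenvalue of $g$ acting on the rational Tate module $V_{\ell}(A)$ to have absolute value $\sqrt{q}$. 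Under the canonical biduality $A^{\vee\vee}\simeq A$ induced by $\alb_X^{\ast}:\Pic^{0}(A)\to A^{\vee}$, the endomorphism $\hat{f}$ corresponds to $g^{\vee}$, which has the same characteristic polynomial as $g$; so the eigenvalues of $\hat{f}$ on $V_{\ell}(A^{\vee})$ also have absolute value $\sqrt{q}<q$. Hence $\hat{f}-q\cdot\id$ has no zero eigenvalue and is an isogeny.

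Pick $N\in A^{\vee}(k)$ with $(\hat{f}-q\cdot\id)(N) = -L$ (an identity of linear equivalence classes in $\Pic(X)$) and set $H' := H + N$. A direct computation in $\Pic(X)$ yields
\[
f^{\ast}H' - qH' \;=\; (f^{\ast}H - qH) + (f^{\ast}N - qN) \;=\; L + (\hat{f}-q\cdot\id)(N) \;=\; 0,
\]
so $f^{\ast}H'\sim qH'$, completing the proof. I expect the main obstacle to be the isogeny claim in the third paragraph: it is precisely the place where the hypotheses (normality of $X$ and separability of $f$) enter, through Theorem \ref{ap-cor-dual}(1), by upgrading the numerical $q$-polarization of $f$ to a genuine $q$-polarization of $g$ on $\Alb(X)$, which in turn enables the Rosati-positivity estimate $|\alpha|=\sqrt{q}$ on Tate-module eigenvalues.
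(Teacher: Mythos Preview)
Your overall strategy coincides with the paper's: reduce to finding $N\in\Pic^0(X)_{\red}$ with $(\widehat{f}-q)(N)=-L$, and for this, show that $\widehat{f}-q$ is surjective on $A^{\vee}:=\Pic^0(X)_{\red}$. However, there is a genuine circularity in your citation chain. You invoke Theorem~\ref{ap-cor-dual}(1) to obtain that $g$ on $A=\Alb(X)$ is $q$-polarized (with an honest linear equivalence $g^{\ast}H_A\sim qH_A$), but in the paper's logical order the proof of Theorem~\ref{ap-cor-dual}(1) itself appeals to Theorem~\ref{thm-num-lin} to upgrade $g$ from numerically polarized to polarized. As written, your argument is therefore circular. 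The repair is immediate: cite Lemma~\ref{ap-lem-surj} instead, which gives that $g$ is \emph{numerically} $q$-polarized and separable; since $\NS(A)$ is torsion-free on an abelian variety, $g^{\ast}H_A\equiv qH_A$ already forces $g^{\ast}H_A-qH_A\in\Pic^0(A)$, i.e.\ $\phi_{g^{\ast}H_A}=q\,\phi_{H_A}$. Writing $g=T_a\circ g_0$ with $g_0$ an isogeny, the identity $\phi_{g_0^{\ast}H_A}=\widehat{g_0}\circ\phi_{H_A}\circ g_0$ then yields $g_0^{\dagger}g_0=q$ in $\operatorname{End}(A)\otimes\Q$, and your Rosati positivity argument goes through unchanged.

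With that fix, your route to the surjectivity of $\widehat{f}-q$ is correct but genuinely different from the paper's. You use Rosati positivity to conclude that all $\ell$-adic eigenvalues of $g_0$, and hence of $\widehat{g_0}=\widehat{f}$, have absolute value $\sqrt{q}<q$, so $\widehat{f}-q$ is an isogeny. The paper instead first proves (Lemma~\ref{ap-lem-dual}, via a diagram chase with $\phi_L$ and the norm criterion of Proposition~\ref{prop-fx-x}) that $\widehat{f}$ is itself numerically $q$-polarized on $A^{\vee}$, and then applies the elementary Lemma~\ref{ap-lem-2}: for a numerically $q$-polarized isogeny $h$ of an abelian variety, $h-\lambda$ fails to be surjective only if $q=\lambda^{2}$; taking $\lambda=q$ gives the contradiction $q=q^{2}$. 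Your argument is more classical and avoids Proposition~\ref{prop-fx-x}; the paper's argument is more self-contained within the paper and, as a byproduct, establishes that $\widehat{f}$ is numerically $q$-polarized, a fact reused elsewhere (e.g.\ in the proof of Theorem~\ref{thm-torsion}).
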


%

\begin{lemma}\label{app-lem-stein} Let $\pi:X\to Y$ be a morphism between normal projective varieties.
Let $p_1:X\to Z$ and $p_2:Z\to Y$ be the Stein factorization of $\pi$.
Suppose there are two finite endomorphisms $f:X\to X$ and $g:Y\to Y$ such that $g\circ \pi=\pi\circ f$.
Then there is a finite endomorphism $h:Z\to Z$ such that $h\circ p_1=p_1\circ f$ and $g\circ p_2=p_2\circ h$.
\end{lemma}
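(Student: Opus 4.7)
The strategy is to apply the uniqueness of Stein factorization to the morphism $\pi\circ f = g\circ \pi \colon X\to Y$ expressed in two different ways. First, I would form the composition $p_1\circ f \colon X\to Z$, which is proper since $f$ is finite and $p_1$ is part of the Stein factorization of the projective morphism $\pi$. Let $X\xrightarrow{s_1} W\xrightarrow{s_2}Z$ denote its Stein factorization, so that $s_1$ is proper and surjective with $(s_1)_\ast\mathcal{O}_X=\mathcal{O}_W$ and geometrically connected fibres, while $s_2$ is finite.

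Composing with $p_2$, the equality
$$\pi\circ f \;=\; p_2\circ p_1\circ f \;=\; (p_2\circ s_2)\circ s_1$$
exhibits $\pi\circ f$ as a proper surjection with connected fibres ($s_1$) followed by a finite morphism ($p_2\circ s_2$); hence this is a Stein factorization of $\pi\circ f\colon X\to Y$. On the other hand, using the given commutation $\pi\circ f = g\circ \pi = (g\circ p_2)\circ p_1$, I obtain a second such presentation, with $p_1$ having connected fibres (being the Stein factor of $\pi$) and $g\circ p_2$ finite. By the uniqueness of Stein factorization up to unique isomorphism (equivalently, by the relative-$\mathit{Spec}$ description of its middle term), there is an isomorphism $\eta \colon W\to Z$ satisfying $\eta\circ s_1 = p_1$ and $(g\circ p_2)\circ \eta = p_2\circ s_2$.

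The desired endomorphism is then $h := s_2\circ \eta^{-1}\colon Z\to Z$. A direct computation yields
$$h\circ p_1 \;=\; s_2\circ \eta^{-1}\circ\eta\circ s_1 \;=\; s_2\circ s_1 \;=\; p_1\circ f,$$
$$p_2\circ h \;=\; p_2\circ s_2\circ \eta^{-1} \;=\; (g\circ p_2)\circ \eta\circ \eta^{-1} \;=\; g\circ p_2,$$
and $h$ is finite because $s_2$ is. I do not foresee a genuine obstacle here: the only points of care are the existence of a Stein factorization for $p_1\circ f$ (automatic from properness) and its uniqueness, both of which are standard in the projective setting of the statement.
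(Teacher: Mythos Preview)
Your proof is correct and follows essentially the same approach as the paper: both take the Stein factorization of $p_1\circ f$, compare the two resulting Stein factorizations of $\pi\circ f = g\circ\pi$, invoke uniqueness to obtain an isomorphism between the intermediate varieties, and define $h$ as the composite of that isomorphism with the finite part. Your write-up is slightly more detailed in verifying both commutation relations explicitly, but the argument is the same.
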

\begin{proof} Let $X\xrightarrow{q_1}Z'\xrightarrow{q_2}Z$ be the Stein factorization of $p_1\circ f$. Then $X\xrightarrow{q_1}Z'\xrightarrow{p_2\circ q_2}Y$ and $X\xrightarrow{p_1}Z\xrightarrow{g\circ p_2}Y$ are both the Stein factorizations of $g\circ \pi=\pi\circ f$. So $q_1=\sigma\circ p_1$ with $\sigma:Z\to Z'$ being an isomorphism. Therefore, $h=q_2\circ\sigma$ is a finite endomorphism of $Z$ as required.
\end{proof}

\begin{lemma}\label{app-lem-sep-e}Let $\pi:X\to Y$ be a finite surjective morphism of two normal varieties. Let $f:X\to X$ and $g:Y\to Y$ be two surjective endomorphisms such that $g\circ \pi=\pi\circ f$.
Let $Z$ be a normal variety such that $\pi=\tau\circ\sigma$ where $\sigma:X\to Z$ is purely inseparable and $\tau:Z\to Y$ is separable.
Then there is a surjective endomorphism $h:Z\to Z$ such that $h\circ \sigma=\sigma\circ f$ and $g\circ \tau=\tau\circ h$.
\end{lemma}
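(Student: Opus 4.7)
The plan is to construct $h$ in two stages: first descend $f$ to an endomorphism of $k(Z)$ at the level of function fields, and then upgrade this rational endomorphism to a genuine morphism by means of Zariski's Main Theorem.

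First, I would work at the level of function fields. The factorization $\pi=\tau\circ\sigma$ into a purely inseparable morphism followed by a separable one identifies $\sigma^{\ast}(k(Z))$ with the separable closure of $\pi^{\ast}(k(Y))$ inside $k(X)$. The assumption $g\circ\pi=\pi\circ f$ gives $f^{\ast}\circ\pi^{\ast}=\pi^{\ast}\circ g^{\ast}$, so $f^{\ast}(\pi^{\ast}(k(Y)))\subseteq \pi^{\ast}(k(Y))$. For any $\alpha\in\sigma^{\ast}(k(Z))$, its minimal polynomial $p(T)\in\pi^{\ast}(k(Y))[T]$ is separable; applying $f^{\ast}$ to the coefficients of $p$ produces a separable polynomial over $\pi^{\ast}(k(Y))$ having $f^{\ast}(\alpha)$ as a root, so $f^{\ast}(\alpha)\in\sigma^{\ast}(k(Z))$. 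Hence $f^{\ast}$ preserves $\sigma^{\ast}(k(Z))$ and induces a finite field endomorphism $h^{\ast}:k(Z)\to k(Z)$ satisfying $\sigma^{\ast}\circ h^{\ast}=f^{\ast}\circ\sigma^{\ast}$ and $h^{\ast}\circ\tau^{\ast}=\tau^{\ast}\circ g^{\ast}$, i.e.\ a dominant rational map $h:Z\dashrightarrow Z$.

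Next, I would promote $h$ to an honest morphism by considering the scheme-theoretic image $W$ of $(\sigma,\sigma\circ f):X\to Z\times Z$. This morphism factors as the graph $(\mathrm{id},f):X\to X\times X$ followed by $\sigma\times\sigma:X\times X\to Z\times Z$: the first is a closed immersion since $X$ is separated, and the second is finite as a product of finite morphisms, so the composite is finite. Thus $W$ is integral (image of the integral scheme $X$), $X\to W$ is finite surjective, and by the standard cancellation for properness, the first projection $p_{1}|_{W}:W\to Z$ is proper and quasi-finite, hence finite. On function fields, $k(W)$ is generated inside $k(X)$ by $p_{1}^{\ast}(k(Z))=\sigma^{\ast}(k(Z))$ and $p_{2}^{\ast}(k(Z))=f^{\ast}\sigma^{\ast}(k(Z))\subseteq\sigma^{\ast}(k(Z))$, so $k(W)=\sigma^{\ast}(k(Z))$ and $p_{1}|_{W}$ is birational. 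Since $Z$ is normal, Zariski's Main Theorem forces $p_{1}|_{W}$ to be an isomorphism, and we set $h:=p_{2}\circ(p_{1}|_{W})^{-1}:Z\to Z$. By construction, $h\circ\sigma=\sigma\circ f$.

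For the remaining identity, $\tau\circ h\circ\sigma=\tau\circ\sigma\circ f=\pi\circ f=g\circ\pi=g\circ\tau\circ\sigma$, and surjectivity of $\sigma$ yields $\tau\circ h=g\circ\tau$; surjectivity of $h$ follows from that of $\sigma\circ f=h\circ\sigma$. The main obstacle is the scheme-theoretic extension in the second step: purely inseparable morphisms do not descend endomorphisms as freely as Stein factorizations do in Lemma \ref{app-lem-stein}, so the clean route is to pass to the graph inside $Z\times Z$ and invoke Zariski's Main Theorem.
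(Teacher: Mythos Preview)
Your proof is correct, but it takes a more elaborate route than the paper's. The paper argues directly at the ring level on affine charts: writing $U_i=\SSpec B_i$, $W_i=\SSpec C_i$, $V_i=\SSpec A_i$ over compatible affine opens, it observes that $C_i$ is exactly the integral closure of $A_i$ in $Q(C_i)$ (since $Z$ is normal and $\tau$ is finite), and then shows in one stroke that $f^{\ast}(C_1)\subseteq C_2$ by combining separability (so $f^{\ast}(C_1)\subseteq Q(C_2)$) with integrality (so $f^{\ast}(C_1)$ lands in the integral closure). This yields the morphism $h$ immediately, without passing through a rational map.

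Your two-stage argument reaches the same conclusion but separates the roles of separability and normality: Step~1 uses separability alone to get $h$ as a rational map, and Step~2 uses normality of $Z$ via Zariski's Main Theorem on the graph to upgrade it to a morphism. This is a perfectly valid and conceptually clean alternative; the graph construction is a nice way to avoid working chart-by-chart. One small remark: what you invoke in Step~2 is not the usual ``cancellation for properness'' (which goes the other way) but rather the descent statement that if $X\to W$ is proper surjective and $X\to Z$ is proper then $W\to Z$ is proper---this is correct, but worth stating precisely. The paper's approach is shorter because it exploits the concrete description of $Z$ as a relative normalization, which packages both ingredients at once.
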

\begin{proof}
Let $V_1:=\SSpec A_1$ be an affine open dense subset of $Y$.
Denote by $U_1:=\pi^{-1}(V_1):=\SSpec B_1$ and $W_1:=\tau^{-1}(V_1):=\SSpec C_1$.
Let $V_2:=g^{-1}(V_1):=\SSpec A_2$.
Denote by $U_2:=\pi^{-1}(V_2):=\SSpec B_2$ and $W_2:=\tau^{-1}(V_2):=\SSpec C_2$.
For convenience, we denote by $g:A_1\to A_2$, $f:B_1\to B_2$ the induced ring injections.
Then we have the following commutative diagram.
$$\xymatrix{
  A_1 \ar[r]^{g}\ar[d] & A_2\ar[d]        \\
  C_1 \ar[d] & C_2\ar[d]             \\
  B_1 \ar[r]^{f}& B_2}$$
Note that every element of $C_1$ is separable over $Q(A_1)$ (the quotient field of $A_1$).
So every element of $f(C_1)$ is separable over $g(Q(A_1))$ and hence separable over $Q(A_2)$.
Therefore, $f(C_1)\subseteq Q(C_2)$.
Note that $C_1$ is the integral closure of $A_1$ in $Q(C_1)$.
So $f(C_1)$ is integral over $g(A_1)$ and hence integral over $A_2$.
In particular, $f(C_1)\subseteq C_2$.
Then we have a ring homomorphism $h:=f|_{C_1}:C_1\to C_2$ such that the above diagram commutes with $h$.
\end{proof}

\begin{lemma}\label{ap-lem-gen} Let $f:A\to A$ be a surjective endomorphism of an abelian variety $A$. Suppose there is a closed subvariety $Z$ of $A$ such that
\begin{itemize}
\item[(1)] $f(Z)=Z$,
\item[(2)] $f|_Z$ is numerically polarized separable, and
\item[(3)] $Z$ generates $A$.
\end{itemize}
Then $Z=A$.
\end{lemma}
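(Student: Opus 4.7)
The plan is to argue by induction on $\dim A$, reducing via the identity component of the stabilizer of $Z$, and then finishing with Ueno's theorem combined with a volume comparison. Write $f(x)=\alpha(x)+t_0$ with $\alpha$ an isogeny and $t_0\in A$, and set $B:=\Stab(Z)^0$, the identity component of $\{a\in A:Z+a=Z\}$; this is an abelian subvariety of $A$. Since $f(Z)=Z$, one verifies $\alpha(B)\subseteq B$, so $f$ descends along $\pi:A\to A':=A/B$ to $\bar{f}:A'\to A'$, and $Z':=\pi(Z)$ is $\bar{f}$-invariant, generates $A'$, and satisfies $Z=\pi^{-1}(Z')$ (as $Z$ is $B$-stable). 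If $B=A$ then $Z=A$ and we are done; otherwise $\dim A'\ge 1$, and we will derive a contradiction from the assumption $\dim Z'\ge 1$.

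The map $\pi|_Z:Z\to Z'$ is an \'etale-locally-trivial $B$-torsor, in particular smooth, so $Z'$ is normal and $\deg(f|_Z)=\deg(\bar{f}|_{Z'})\cdot\deg(\alpha|_B)$. Restricting the polarization $H_Z$ of $f|_Z$ to a fiber of $\pi|_Z$ yields an ample $H_B$ on $B$ with $(\alpha|_B)^*H_B\equiv qH_B$ (translations act trivially on $\N^1$ of an abelian variety), so $\deg(\alpha|_B)=q^{\dim B}$ and therefore $\deg(\bar{f}|_{Z'})=q^{\dim Z'}\ge q>1$ whenever $\dim Z'\ge 1$. Separability of $\bar{f}|_{Z'}$ follows from Lemma \ref{sepdeg}(3) applied to $\pi|_Z$.

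By construction $\Stab(Z')^0=0$ in $A'$, so Ueno's theorem on subvarieties of abelian varieties gives $\kappa(Z')=\dim Z'\ge 1$, and (after a smooth birational alteration if $Z'$ is not $\Q$-Gorenstein) $K_{Z'}$ is big. The ramification formula $K_{Z'}=(\bar{f}|_{Z'})^*K_{Z'}+R$ with $R\ge 0$, together with monotonicity of volume under addition of an effective divisor, yields
\[
\vol\bigl((\bar{f}|_{Z'})^*K_{Z'}\bigr)\le\vol(K_{Z'}),
\]
while separable finite surjectivity of $\bar{f}|_{Z'}$ gives
\[
\vol\bigl((\bar{f}|_{Z'})^*K_{Z'}\bigr)=\deg(\bar{f}|_{Z'})\cdot\vol(K_{Z'})=q^{\dim Z'}\vol(K_{Z'}).
\]
Since $\vol(K_{Z'})>0$, this forces $q^{\dim Z'}\le 1$, contradicting $q>1$ and $\dim Z'\ge 1$. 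Hence $\dim Z'=0$, so $A'=0$, and $Z=A$.

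The main technical obstacle is handling the singular case of $Z'$ when it is not $\Q$-Gorenstein: one passes to a smooth alteration $\widetilde{Z}'\to Z'$, lifts $\bar{f}|_{Z'}$ appropriately (possibly after a further alteration), and checks that both the ramification formula and the degree-versus-volume identity survive the lift. The remaining ingredients---descent of $f$ via the stabilizer, restriction of the polarization to fibers of a smooth abelian fibration, and the separability bookkeeping---are routine given the preliminary material in the paper.
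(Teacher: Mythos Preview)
Your reduction via the identity component $B=\Stab(Z)^0$ and the descent to $A/B$ matches the paper's setup exactly; the divergence is in how you handle $Z'=\pi(Z)$ once $\Stab(Z')^0=0$. The paper simply invokes \cite[Proposition~5.3]{PR}: an endomorphism of a (semi)abelian variety restricted to an invariant subvariety with finite stabilizer is \emph{bijective}. Since $\bar f|_{Z'}$ is numerically polarized separable (via Theorem~\ref{polar_des} and Lemma~\ref{sepdeg}) and hence of degree $>1$, this is an immediate contradiction, and no regularity hypothesis on $Z'$ is needed.

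Your route through Ueno's theorem and a volume comparison has a genuine gap. The claim ``$\pi|_Z$ is smooth, so $Z'$ is normal'' requires $Z$ itself to be normal, which is not assumed; faithfully flat descent of normality goes from source to target, not the other way. Without normality of $Z'$ (or at least $\Q$-Gorensteinness), neither the ramification formula $K_{Z'}=(\bar f|_{Z'})^*K_{Z'}+R$ nor the identity $\vol\bigl((\bar f|_{Z'})^*K_{Z'}\bigr)=\deg(\bar f|_{Z'})\cdot\vol(K_{Z'})$ is available---the latter needs $K_{Z'}$ to be $\Q$-Cartier for the projection formula to apply. Your proposed remedy, passing to a smooth alteration $\widetilde{Z}'\to Z'$ and lifting $\bar f|_{Z'}$, does not work: endomorphisms do not lift to resolutions in general, only to rational self-maps (or to the normalization, but then Ueno's conclusion about a \emph{smooth} model no longer controls $K$ of the normalization unless you know the singularities are canonical). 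There is also the side issue that Ueno's theorem in positive characteristic needs a careful citation. The Pink--Roessler argument bypasses all of this in one line.
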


\begin{proof} We may assume $\dim(A)>0$.
Let $a=f(0)$. Then $f=T_a \circ f_0$ where $f_0$ is an isogeny,
and $T_a$ is the translation.
Let $\Stab_A(Z)$ be the maximal closed subgroup of $A$ such that $\Stab_A(Z)+Z=Z$.
If $\dim(\Stab_A(Z))=0$, then $f|_Z$ is bijective by \cite[Proposition 5.3]{PR}, a contradiction to the condition (2).
Suppose $\dim(\Stab_A(Z))>0$. Let $B$ be the neutral component of $\Stab_A(Z)$. Then $f_0(B)=B$.
If $B=A$, then $Z=A$, and we are done.

Suppose $B\subsetneq A$.
Let $p:A\to A/B$ be the natural projection. Then there is a surjective endomorphism $\bar{f}:A/B\to A/B$ such that $\bar{f}\circ p=p\circ f$.
Since $Z$ generates $A$, $\dim(p(Z))>0$.
Note that $\bar{f}|_{p(Z)}$ is numerically polarized separable (cf.~Lemma \ref{sepdeg} and \cite[Theorem 3.11]{MZ}) and $\dim(\Stab_{A/B}(p(Z)))=0$. We get a contradiction again by \cite[Proposition 5.3]{PR}.
\end{proof}

\begin{lemma}\label{ap-lem-surj} Let $f:X\to X$  be a numerically $q$-polarized separable endomorphism of a normal projective variety $X$.
Then $\alb_X:X\to \Alb(X)$ is surjective and the induced endomorphism
$g:\Alb(X)\to \Alb(X)$ is again numerically $q$-polarized separable.
\end{lemma}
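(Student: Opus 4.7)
The plan is to apply Lemma \ref{ap-lem-gen} to $g:A\to A$ (with $A:=\Alb(X)$) and the closed subvariety $Z:=\alb_X(X)$ in order to force $Z=A$; the numerical $q$-polarization and separability of $g$ will drop out of the verification of Lemma \ref{ap-lem-gen}(2). First I take the Stein factorization $X\xrightarrow{\pi}W\xrightarrow{\tau}A$ of $\alb_X$, so $W$ is normal projective with $\pi_\ast\OO_X=\OO_W$ and $\tau$ is finite with $\tau(W)=Z$. Lemma \ref{app-lem-stein} supplies a surjective endomorphism $h:W\to W$ with $h\circ\pi=\pi\circ f$ and $g\circ\tau=\tau\circ h$; this gives $g(Z)=Z$, i.e.~hypothesis (1) of Lemma \ref{ap-lem-gen}, while hypothesis (3), that $Z$ generates $A$, is part of the universal property of $\alb_X$.

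For separability, the condition $\pi_\ast\OO_X=\OO_W$ forces $k(W)$ to be algebraically closed in the finitely generated extension $k(X)$, so Lemma \ref{sepdeg}(3) transports separability from $f$ to $h$; then Lemma \ref{sepdeg}(1) applied to the finite extension $k(W)/k(Z)$ induced by $\tau$ propagates it to $g|_Z$.

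For the numerical $q$-polarization of $g|_Z$, I exploit the injective equivariant pull-backs $\tau^\ast:\N^1(Z)\hookrightarrow\N^1(W)$ and $\pi^\ast:\N^1(W)\hookrightarrow\N^1(X)$, injectivity being a consequence of the surjectivity of the underlying morphisms. Since $f$ is numerically $q$-polarized, Proposition \ref{prop-fx-x} gives that $f^\ast$ is diagonalisable on $\N^1(X)$ with all eigenvalues of modulus $q$, and in particular condition (ii) of that proposition holds. This norm bound passes to the actions of $h^\ast$ and $g|_Z^\ast$ on the respective invariant subspaces via the injective pull-backs. Because $g$ is an isogeny composed with a translation on the abelian variety $A$, both $h$ and $g|_Z$ are finite surjective morphisms, so $h^\ast$ and $g|_Z^\ast$ preserve the respective ample cones in both directions. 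Proposition \ref{prop-fx-x}(ii)$\Rightarrow$(i) then produces an ample $q$-eigenvector for each, so $h$ and $g|_Z$ are numerically $q$-polarized.

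Lemma \ref{ap-lem-gen} now forces $Z=A$, so $\alb_X$ is surjective and $g=g|_Z$ is numerically $q$-polarized and separable. The main technical hurdle is the descent of numerical polarization through the Stein factorization when $Z$ is a priori only a closed subvariety of $A$ and may be non-normal; I handle this uniformly by the injective pull-backs on N\'eron--Severi groups together with the eigenvalue/cone criterion in Proposition \ref{prop-fx-x}, rather than by exhibiting ample $q$-eigenvectors by hand.
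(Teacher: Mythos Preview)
Your proof is correct and follows the same overall strategy as the paper: verify the hypotheses of Lemma \ref{ap-lem-gen} for $Z=\alb_X(X)$, then conclude $Z=\Alb(X)$. The paper's proof is a two-line appeal to \cite[Theorem 3.11]{MZ} (restated here as Theorem \ref{polar_des}) for the numerical $q$-polarization of $g|_Z$, together with Lemma \ref{sepdeg} for separability.

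Your route differs only in that you unpack the polarization descent by hand via Proposition \ref{prop-fx-x} and the injective pull-backs, which is in essence a reproof of the relevant case of Theorem \ref{polar_des}; this has the virtue of not relying on normality of $Z$, which you correctly flag as a potential sticking point if one reads Theorem \ref{polar_des} literally. The Stein factorization is, however, unnecessary scaffolding: Lemma \ref{sepdeg}(3) already applies directly to the extension $k(Z)\hookrightarrow k(X)$ to give separability of $g|_Z$ from that of $f$, and your $\N^1$ argument works verbatim with $\alb_X^\ast:\N^1(Z)\hookrightarrow\N^1(X)$ in place of the two-step embedding through $W$. Dropping the intermediate $W$ and $h$ would streamline your write-up to something close to the paper's.
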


\begin{proof}
Let $Z := \alb_X(X)$ which generates $\Alb(X)$.
Then $g(Z) = Z$ and $g|_Z$ is numerically $q$-polarized separable by \cite[Theorem 3.11]{MZ} and Lemma \ref{sepdeg}.
Now Lemma \ref{ap-lem-gen} implies that $Z = \Alb(X)$.
\end{proof}

\begin{lemma}\label{app-lem-finite-abe} Let $f:X\to X$ be a numerically polarized separable endomorphism of a normal projective variety $X$.
Suppose $\alb_X$ is finite and surjective.
Then $\alb_X$ is an isomorphism and $X$ is an abelian variety.
\end{lemma}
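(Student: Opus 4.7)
My plan has three stages: (i) identify the descent of $f$ to $A := \Alb(X)$ as an \'etale isogeny; (ii) factor $\alb_X$ into separable and purely inseparable parts and handle the separable piece; (iii) rule out a non-trivial purely inseparable part. For stage (i), by Lemma \ref{ap-lem-surj} the induced $g : A \to A$ is numerically $q$-polarized and separable; as a separable isogeny of an abelian variety, $g$ is \'etale, so $R_g = 0$.

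For stage (ii), let $X'$ be the normalization of $A$ in the separable closure of $k(A)$ in $k(X)$, so that $\alb_X = \alb_X^s \circ \alb_X^i$ with $\alb_X^i : X \to X'$ purely inseparable and $\alb_X^s : X' \to A$ separable finite. By Lemma \ref{app-lem-sep-e}, $f$ descends to $h : X' \to X'$, which is separable by Lemma \ref{sepdeg}(1) and numerically $q$-polarized (pulling the polarization back via the finite morphism $\alb_X^s$). Applying the ramification divisor formula to $g \circ \alb_X^s = \alb_X^s \circ h$ and using $R_g = 0$ gives $R_h + h^* R_{\alb_X^s} = R_{\alb_X^s}$. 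Intersecting with $(\alb_X^s)^* H^{n-1}$ (with $H$ ample on $A$ satisfying $g^*H \equiv qH$) and using the projection formula to see $h^* R_{\alb_X^s} \cdot (\alb_X^s)^* H^{n-1} = q\, R_{\alb_X^s} \cdot (\alb_X^s)^* H^{n-1}$, one obtains
\[
R_h \cdot (\alb_X^s)^* H^{n-1} = (1-q)\, R_{\alb_X^s} \cdot (\alb_X^s)^* H^{n-1}.
\]
Effectivity of both divisors together with $q > 1$ forces $R_{\alb_X^s} = 0$. Purity of the branch locus (with $A$ smooth and $X'$ normal) then gives $\alb_X^s$ \'etale, so $X'$ is itself an abelian variety. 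By universality of the Albanese applied to the morphism $\alb_X^i : X \to X'$ into an abelian variety, there is $\psi : A \to X'$ with $\alb_X^i = \psi \circ \alb_X$; substituting into $\alb_X = \alb_X^s \circ \alb_X^i$ and cancelling the dominant morphisms $\alb_X$, resp.\ $\alb_X^i$, on the right yields $\alb_X^s \circ \psi = \id_A$ and $\psi \circ \alb_X^s = \id_{X'}$, so $\alb_X^s$ is an isomorphism. Identifying $X' \cong A$ via $\alb_X^s$ reduces us to the case $\alb_X = \alb_X^i$ purely inseparable.

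The main obstacle is stage (iii): eliminating the purely inseparable part, since the universal property of the Albanese only detects the reduced part of $\Pic^0$ and purely inseparable (Frobenius-like) covers of abelian varieties do occur in positive characteristic. My intended approach is as follows: if $\deg \alb_X = p^d > 1$, then from $k(X)^{p^d} \subseteq k(A)$ one produces a finite surjective $\tilde\phi : A \to X$ of degree $p^{d(n-1)}$ with $F_X^d = \tilde\phi \circ \alb_X$ and $\alb_X \circ \tilde\phi = F_A^d$; cancelling the monic $\alb_X$ from $\alb_X \circ \tilde\phi \circ g = \alb_X \circ f \circ \tilde\phi$ shows $\tilde\phi$ is $(g, f)$-equivariant. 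A careful numerical analysis combining these Frobenius factorizations (which give $\alb_X^* \circ \tilde\phi^* = p^d \cdot \id$ and $\tilde\phi^* \circ \alb_X^* = p^d \cdot \id$ on the respective N\'eron--Severi groups) with the $q$-polarization of $f$ and $g$, the separability of $f$, and the bookkeeping of (in)separable degrees through the equivariant square, should force $d = 0$. This then gives $\alb_X$ separable hence an isomorphism, and $X$ is an abelian variety.
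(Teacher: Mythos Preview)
Your stages (i) and (ii) are correct and essentially match the paper's argument: the paper factors $\alb_X$ through the same intermediate variety (their $Z$ is your $X'$), descends $f$ via Lemma \ref{app-lem-sep-e}, shows the separable part is \'etale (they deduce $K_Z\sim 0$ via Lemma \ref{app-lem-k-eff} rather than your direct intersection computation, but the content is the same), and then uses the universal property to conclude $\alb_X^s$ is an isomorphism. Your intersection argument for $R_{\alb_X^s}=0$ is a nice shortcut.

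Stage (iii) is where there is a genuine gap. You correctly construct the Frobenius factorization $\tilde\phi$ and verify it is $(g,f)$-equivariant, but the promised ``careful numerical analysis'' is never carried out, and I do not see how your listed ingredients can force $d=0$. The identities $\alb_X^*\circ\tilde\phi^*=p^d\cdot\id$ and $\tilde\phi^*\circ\alb_X^*=p^d\cdot\id$ only say that $\alb_X^*$ is an isomorphism on $\NS(\cdot)_{\Q}$; the two equivariant squares are mutually consistent for any $d$; and the (in)separable degree bookkeeping through $g\circ\alb_X=\alb_X\circ f$ and $f\circ\tilde\phi=\tilde\phi\circ g$ balances perfectly (separable degree $q^n$, inseparable degree $p^d$ resp.\ $p^{d(n-1)}$ on each side). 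None of this produces a contradiction when $d>0$. The separability of $f$ is already fully encoded in these degree counts and gives nothing further at the level of N\'eron--Severi groups.

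The paper handles this step quite differently and with nontrivial external input. First, Lemma \ref{app-lem-k-eff} (applied to $f$ on $X$) yields the dichotomy: either $mK_X\sim 0$ for some $m>0$, or no positive multiple of $K_X$ is effective. With this dichotomy in hand, the paper invokes \cite[Proposition 1.4]{HPZ} (Hacon--Patakfalvi--Zhang) to conclude that $\alb_X$ is separable, hence an isomorphism. That result is a genuine structural theorem about Albanese morphisms in positive characteristic and is not something your Frobenius/N\'eron--Severi bookkeeping recovers. If you want to avoid citing \cite{HPZ}, you would need a substantively new argument for why a normal variety admitting a separable polarized endomorphism cannot sit as a nontrivial purely inseparable cover of its Albanese; your outline does not supply one.
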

\begin{proof}
Let $Z$ be a normal projective variety such that $\alb_X=\tau\circ\sigma$ where $\sigma:X\to Z$ is purely inseparable and $\tau:Z\to \Alb(X)$ is separable.
By Lemma \ref{app-lem-sep-e}, there is a surjective endomorphism $h:Z\to Z$ such that $h\circ \sigma=\sigma\circ f$ and $g\circ \tau=\tau\circ h$.
Note that $h$ is numerically polarized separable by \cite[Theorem 3.11]{MZ} and Lemma \ref{sepdeg}.
Since $\tau$ is separable, $K_Z$ is effective by the ramification divisor formula.
So $K_Z\sim 0$ by Lemma \ref{app-lem-k-eff} and hence $\tau$ is \'etale by the purity of branch loci.
In particular, $Z$ is an abelian variety (cf.~\cite[Chapter IV, 18]{Mu}).
By the universal property of $\alb_X$, $\tau$ is an isomorphism.
So we may assume $\alb_X$ is purely inseparable.
By Lemma \ref{app-lem-k-eff}, either $mK_X\sim 0$ for some $m>0$ or $mK_X$ is not effective for any $m>0$.
Then $\alb_X$ is separable by \cite[Proposition 1.4]{HPZ}.
So $\alb_X$ is an isomorphism and hence $X$ is an abelian variety.
\end{proof}

Let $A$ be an abelian variety.
We recall some facts from \cite{Mu}.
Let $0 \ne n\in \mathbb{Z}$. Denote by $n_A:A\to A$ the isogeny sending $a$ to $na$.
Let $L$ be a Cartier divisor of $A$.
Then $n_A^*L\equiv n^2L$.
Let
$$\begin{aligned}
\phi_L: \, A \, \to \, \widehat{A}:=\Pic^0(A) \\
a \, \mapsto \, T_a^*L - L
\end{aligned}$$
where $T_a$ is the translation map by $a$.
Note that $L\in \Pic^0(A)$ if and only if $\phi_L$ is a trivial map.
If $L$ is ample, then $\phi_L$ is an isogeny.

Let $f:X\to Y$ be a morphism of normal projective varieties. Denote by $\widehat{f}:=f^*|_{\Pic^0(Y)_{\red}}:\Pic^0(Y)_{\red}\to \Pic^0(X)_{\red}$ the dual of $f$.
Let $g:\Alb(X)\to \Alb(Y)$ be the induced morphism,
where $\Alb(X)=\Pic^0(\Pic^0(X)_{\red})$.
Then $\widehat{g}=\widehat{f}$.

\begin{lemma}\label{ap-lem-dual} Let $f:A\to A$ be a numerically $q$-polarized endomorphism of an abelian variety $A$.
Then the dual $\widehat{f}:\Pic^0(A)\to \Pic^0(A)$ is numerically $q$-polarized.
\end{lemma}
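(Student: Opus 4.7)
The plan is to reduce first to the case when $f$ is a group homomorphism. Writing $f = T_a\circ f_0$ with $a = f(0)$, $T_a$ translation by $a$, and $f_0$ a homomorphism (necessarily an isogeny since $f$ is surjective), I would observe that $T_a^*L = L$ for every $L\in\Pic^0(A)$, so $\widehat{f} = \widehat{f_0}$; similarly $T_a^*H - H = \phi_H(a)\in\Pic^0(A)$ implies $T_a^*H \equiv H$, and hence $f_0^*H \equiv qH$ numerically.

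Next I would invoke the fundamental identity $\phi_{g^*L} = \widehat{g}\circ\phi_L\circ g$, valid for any homomorphism $g:A\to A$ and $L\in\Pic(A)$. Since $\phi_L$ depends only on the class of $L$ modulo $\Pic^0(A)$, taking $g = f_0$ and $L = H$ gives
\[
\widehat{f_0}\circ\phi_H\circ f_0 \;=\; \phi_{f_0^*H} \;=\; \phi_{qH} \;=\; q\,\phi_H,
\]
equivalently $\widehat{f_0} = q\,\phi_H\circ f_0^{-1}\circ\phi_H^{-1}$ in $\Hom(A,\widehat{A})\otimes\Q$, where $\phi_H$ is invertible since it is an isogeny.

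I would then introduce the \emph{dual polarization}: since $\phi_H$ is an isogeny, there exist a positive integer $n$ and an ample class $\widehat{H}\in\NS(\widehat{A})$ such that $\phi_{\widehat{H}}\circ\phi_H = n_A$ and $\phi_H\circ\phi_{\widehat{H}} = n_{\widehat{A}}$, so $\phi_{\widehat{H}} = n\,\phi_H^{-1}$ over $\Q$. Combining the fundamental identity on $\widehat{A}$ with the double duality $\widehat{\widehat{f_0}} = f_0$ yields
\[
\phi_{\widehat{f_0}^*\widehat{H}} \;=\; f_0\circ\phi_{\widehat{H}}\circ\widehat{f_0} \;=\; n\,f_0\circ\phi_H^{-1}\circ\widehat{f_0}.
\]
Substituting the expression for $\widehat{f_0}$ from the previous paragraph, a direct cancellation gives $f_0\circ\phi_H^{-1}\circ\widehat{f_0} = q\,\phi_H^{-1}$, hence $\phi_{\widehat{f_0}^*\widehat{H}} = qn\,\phi_H^{-1} = q\,\phi_{\widehat{H}} = \phi_{q\widehat{H}}$. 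Since $L\mapsto\phi_L$ is injective on $\NS(\widehat{A})$ ($\NS$ of an abelian variety being torsion-free, with kernel $\Pic^0\cap\NS = 0$), this forces $\widehat{f_0}^*\widehat{H}\equiv q\widehat{H}$, so $\widehat{f}$ is numerically $q$-polarized by the ample class $\widehat{H}$.

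The main obstacle I foresee is confirming that the dual polarization $\widehat{H}$ is actually \emph{ample}, not merely non-degenerate --- i.e., that the symmetric $\Q$-isogeny $n\,\phi_H^{-1}$ arises as $\phi_M$ for some ample $M$. This is classical for abelian varieties (see, e.g., Mumford's \emph{Abelian Varieties}), so reduces to a citation; once $\widehat{H}$ is in hand, all the remaining manipulations via the $\Q$-pseudo-inverse of $\phi_H$ are routine.
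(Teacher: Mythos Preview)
Your argument is correct and complete, but it proceeds differently from the paper's proof. Both proofs begin with the same reduction to the isogeny case and the same fundamental identity $\widehat{f_0}\circ\phi_H\circ f_0 = q\,\phi_H$ (equivalently $q_{\widehat{A}}\circ\phi_H = \widehat{f_0}\circ\phi_H\circ f_0$). From there, however, the paper does \emph{not} construct an explicit ample class on $\widehat{A}$. Instead it passes to pullbacks on $\N^1$, observes that the commutative square forces $\varphi := q_{\widehat{A}}^*\circ(\widehat{f}^*)^{-1}$ to be conjugate (via $\phi_H^*$) to $f^*$, and then invokes the abstract norm criterion of Proposition~\ref{prop-fx-x}: since $f^*$ satisfies the bound $\|\!f^{*i}\|/q^i < N$ for all $i$, so does $\varphi$, and hence so does $\widehat{f}^* = \varphi^{-1}\circ q_{\widehat{A}}^*$; this forces $\widehat{f}$ to be numerically $q$-polarized. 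Your route is more constructive --- you exhibit the polarizing class $\widehat{H}$ directly --- at the cost of needing the existence of the dual polarization (a classical but non-trivial input from the theory of abelian varieties). The paper's route avoids this input entirely, trading it for the linear-algebra machinery of Proposition~\ref{prop-fx-x}, which is already available in the paper and applies uniformly without knowing anything specific about $\widehat{A}$.
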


\begin{proof} Write $f=T_a \circ f_0$ where $f_0:A\to A$ is an isogeny and $T_a$ is the translation.
Note that $f_0$ is also numerically $q$-polarized and $\widehat{f}=\widehat{f_0}$. So we may assume $f$ is an isogeny.
By the assumption, $f^*L-qL\in \Pic^0(A)$ for some ample line bundle $L$.
Note that $q_{\widehat{A}}\circ \phi_L=\phi_{f^*L}=\widehat{f}\circ\phi_L\circ f$, i.e.
the following is a commutative diagram
$$\xymatrix{
A\ar[r]^{f}\ar[d]^{\phi_L}&A\ar[dd]^{\phi_L}\\
\widehat{A}\ar[d]^{q_{\widehat{A}}}\\
\widehat{A}&\widehat{A}\ar[l]^{\widehat{f}} . \\
}$$

Taking the pullback of the above, we have:
$$\xymatrix{
\N^1(A)&\N^1(A)\ar[l]_{f^*}                      \\
\N^1(\widehat{A})\ar[u]^{\phi_L^*}\\
\N^1(\widehat{A})\ar[r]^{\widehat{f}^*}\ar[u]^{q_{\widehat{A}}^*}&\N^1(\widehat{A})\ar[uu]^{\phi_L^*}\ar[lu]_{\varphi}\\
}$$
where the isomorphism
$\varphi=q_{\widehat{A}}^*\circ(\widehat{f}^*)^{-1}$ makes the whole diagram commutative.
We now apply Proposition \ref{prop-fx-x} (with $C$ being the cone of nef $\R$-Cartier divisors there)  and check the norm condition (2) in it.
By the assumption, $f^*$ satisfies the norm condition, so does $\varphi$ (or equivalently $\varphi^{-1}$),
since $\phi_L^*$ is an isomorphism.
Note that the scalar map $q_{\widehat{A}}^*=q^2\id_{\N^1(\widehat{A})}$
commutes with $\varphi$.
Then our $\widehat{f}^* = \varphi^{-1} \circ q_{\widehat{A}}^*$
satisfies the norm condition.
Hence $\widehat{f}$ is numerically $q$-polarized.
\end{proof}

\begin{lemma}\label{ap-lem-X-dual} Let $f:X\to X$ be a numerically $q$-polarized separable endomorphism of a normal projective variety $X$. Then the dual $\widehat{f}:\Pic^0(X)_{\red}\to \Pic^0(X)_{\red}$ is numerically $q$-polarized.
\end{lemma}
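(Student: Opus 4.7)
The plan is to reduce to the abelian variety case that was just handled in Lemma \ref{ap-lem-dual}, using the Albanese morphism as the bridge.

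First, I would invoke Lemma \ref{ap-lem-surj}: since $f$ is numerically $q$-polarized and separable, the induced endomorphism $g:\Alb(X)\to \Alb(X)$ on the abelian variety $\Alb(X)=\Pic^0(\Pic^0(X)_{\red})$ is again numerically $q$-polarized (and separable, though only the polarization is needed here).

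Second, I would apply Lemma \ref{ap-lem-dual} to $g$. Since $\Alb(X)$ is an abelian variety and $g$ is numerically $q$-polarized, the lemma yields that the dual endomorphism $\widehat{g}:\Pic^0(\Alb(X))\to \Pic^0(\Alb(X))$ is numerically $q$-polarized.

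The final step is the identification $\widehat{g}=\widehat{f}$, which was explicitly recorded in the discussion immediately preceding Lemma \ref{ap-lem-dual}. Concretely, by Cartier biduality the canonical map $\widehat{\alb_X}=\alb_X^*|_{\Pic^0(\Alb(X))}:\Pic^0(\Alb(X))\to \Pic^0(X)_{\red}$ is an isomorphism of abelian varieties, and from $g\circ \alb_X=\alb_X\circ f$ one gets $\widehat{\alb_X}\circ \widehat{g}=\widehat{f}\circ \widehat{\alb_X}$; transporting an ample class $H$ on $\Pic^0(\Alb(X))$ with $\widehat{g}^\ast H\equiv qH$ through this isomorphism produces an ample class $H'$ on $\Pic^0(X)_{\red}$ with $\widehat{f}^\ast H'\equiv qH'$. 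There is no real obstacle here: once Lemmas \ref{ap-lem-surj} and \ref{ap-lem-dual} are in hand, the argument is essentially formal, with the only (very minor) subtlety being to confirm that the identification $\widehat{g}=\widehat{f}$ is compatible with pullbacks of ample classes, which follows because $\widehat{\alb_X}$ is an isomorphism.
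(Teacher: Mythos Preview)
Your proposal is correct and matches the paper's proof essentially line for line: invoke Lemma \ref{ap-lem-surj} to get that $g$ on $\Alb(X)$ is numerically $q$-polarized, observe $\widehat{f}=\widehat{g}$, and apply Lemma \ref{ap-lem-dual}. The paper simply cites the identification $\widehat{f}=\widehat{g}$ without spelling out the biduality argument you added, but otherwise the arguments are identical.
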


\begin{proof} By Lemma \ref{ap-lem-surj}, the induced endomorphism $g:\Alb(X)\to \Alb(X)$ is numerically $q$-polarized separable.
Note that $\widehat{f}=\widehat{g}$.
Now the lemma follows from Lemma \ref{ap-lem-dual}.
\end{proof}

\begin{lemma}\label{ap-lem-2} Let $f:A\to A$ be a numerically $q$-polarized isogeny of an abelian variety $A$ with $n=\dim(A)>0$. Let $\lambda$ be an integer. If $f-\lambda_A$ is not surjective, then $q=\lambda^2$.
\end{lemma}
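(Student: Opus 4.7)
The plan is to exhibit a positive-dimensional abelian subvariety of $A$ on which $f$ acts as multiplication by $\lambda$, and then to pull back the polarization to that subvariety. First, I would dispose of the trivial case: if $\lambda=0$, then $f-\lambda_A=f$ is surjective (since $f$ is an isogeny), contradicting the hypothesis. Hence one may assume $\lambda\ne 0$.

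Next, since $f$ is an isogeny it is a homomorphism with $f(0)=0$, so $\phi:=f-\lambda_A:A\to A$ is a homomorphism of abelian varieties. The failure of surjectivity means $\dim \phi(A)<n$, hence $\dim\ker\phi>0$ by the dimension formula for homomorphisms of abelian varieties. Let $B$ denote the neutral component of $\ker\phi$: a positive-dimensional abelian subvariety of $A$ on which, by construction, $f(b)=\lambda b$ for every $b\in B$, i.e. $f|_B=\lambda_B$.

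Now I would restrict the polarization to $B$. Let $H$ be an ample Cartier divisor with $f^{\ast}H\equiv qH$. Using the standard fact (recalled just before Lemma \ref{ap-lem-dual}) that $n_A^{\ast}L\equiv n^2 L$ for any integer $n$ and any Cartier divisor $L$ on an abelian variety, we compute
\[
q\,H|_B\equiv (f^{\ast}H)|_B=(f|_B)^{\ast}(H|_B)=\lambda_B^{\ast}(H|_B)\equiv \lambda^2\,H|_B.
\]
Since $H|_B$ is ample on the positive-dimensional abelian variety $B$, it is not numerically trivial, hence $q=\lambda^2$. There is essentially no obstacle here beyond observing that $B$ is automatically $f$-stable (because on $B$ we have $f=\lambda_B$, and $\lambda_B$ maps $B$ into itself), which is needed to make the restriction $f|_B$ meaningful.
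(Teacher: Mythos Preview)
Your proof is correct and follows essentially the same approach as the paper: both take the neutral component $B_0$ of $\ker(f-\lambda_A)$, observe that $f|_{B_0}=\lambda_{B_0}$, and deduce $q=\lambda^2$ from the fact that $\lambda_{B_0}$ inherits the numerical $q$-polarization while $\lambda_{B_0}^{\ast}$ acts as $\lambda^2$ on $\N^1(B_0)$. Your write-up is slightly more detailed (explicitly handling $\lambda=0$ and spelling out the restriction $(f^{\ast}H)|_B=(f|_B)^{\ast}(H|_B)$), but the argument is the same.
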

\begin{proof} Let $B$ the kernel of $f-\lambda_A$. Then $B$ is a closed subgroup of $A$. Let $B_0$ be the neutral component of $B$. Then we have the following commutative diagram
$$\xymatrix{
B_0\ar[r]^{\lambda_{B_0}}\ar@{^{(}->}[d]^{i}&B_0\ar@{^{(}->}[d]^{i}                       \\
A\ar[r]^{f}&A\\
}$$
where $i$ is the inclusion map.

Like $f$, the $\lambda_{B_0}$ is also $q$-polarized. Hence $q=\lambda^2$ if $\dim(B_0)>0$.
\end{proof}

\begin{proof}[Proof of Theorem \ref{thm-num-lin}]
We may assume $\dim(X)>0$.
Suppose $f^*H\equiv qH$ for some ample Cartier divisor $H$ and $q>0$.
Let $D=f^*H-qH$. Replacing $H$ by a multiple, we may assume $D\in \Pic^0(X)_{\red}$.
Note that $\widehat{f}$ is numerically $q$-polarized by Lemma \ref{ap-lem-X-dual}.
So $\widehat{f}-q_{\Pic^0(X)_{\red}}$ is a surjective endomorphism of $\Pic^0(X)_{\red}$ by Lemma \ref{ap-lem-2}.
In particular, there is a Cartier divisor $L\in \Pic^0(X)_{\red}$ such that $f^*L-qL\sim D$.
Then $f^*H'\sim qH'$ for $H'=H-L$. Note that $H'\equiv H$ is ample. So $f$ is polarized.
\end{proof}

We will use Lemmas \ref{prop-irr} and \ref{fibres-rc+irr} proved in Section \ref{sec-qab}.

\begin{proof} [Proof of Theorem \ref{ap-cor-dual}]
By Lemma \ref{ap-lem-surj} and Theorem \ref{thm-num-lin}, $\alb_X$ is surjective and $g$ is $q$-polarized separable.
Taking the Stein factorization of $\alb_X$, we have $\varphi:X\to Y$ and $\psi:Y\to \Alb(X)$ such that $\varphi_\ast\mathcal{O}_X=\mathcal{O}_Y$ and $\psi$ is a finite morphism.
Then $f$ descends to a polarized separable endomorphism $f_Y:Y\to Y$ by Lemma \ref{app-lem-stein}, \cite[Theorem 3.11]{MZ}, Theorem \ref{thm-num-lin} and Lemma \ref{sepdeg}.
By the universal property, $\psi=\alb_Y$.
So $\psi$ is an isomorphism by Lemma \ref{app-lem-finite-abe}, and we can identify
$\alb_X : X \to \Alb(X)$ with $\varphi: X \to Y$.
By Lemmas \ref{prop-irr} and \ref{fibres-rc+irr}, all the fibres of $\alb_X$ are irreducible and equi-dimensional.
So (1) is proved.

For (2), let $W$ be the normalization of the graph of $\mathfrak{alb}_X$.
Then $\Alb(W)=\mathfrak{Alb}(W)=\mathfrak{Alb}(X)$.
Note that $f$ lifts to a numerically $q$-polarized separable endomorphism $f_W:W\to W$ (cf.~Theorem \ref{polar_des}).
Therefore by (1), the induced endomorphism $h:\mathfrak{Alb}(X)\to \mathfrak{Alb}(X)$ is $q$-polarized separable.
Since $\alb_W$ is surjective by (1), $\mathfrak{alb}_X$ is dominant.

(3) follows from Lemma \ref{ap-lem-X-dual}.
\end{proof}

\begin{proof}[Proof of Theorem \ref{thm-torsion}] We may assume $\dim(X)>0$.
By Lemma \ref{lem-pe-qe}, $K_X \equiv 0$, $f$ is quasi-\'etale, and hence $K_X = f^*K_X$.
Further, $mK_X\in \Pic^0(X)_{\red}$ for some $m>0$.
By Theorem \ref{ap-cor-dual}, $\widehat{f}$ is numerically $q$-polarized.
Hence, $\widehat{f}-1_{\Pic^0(X)_{\red}}$ is surjective by Lemma \ref{ap-lem-2} and its kernel is finite.
Therefore, $mK_X\in \Ker(\widehat{f}-1_{\Pic^0(X)_{\red}})$ has finite order in $\Pic^0(X)_{\red}$.
\end{proof}

\section{Descending of polarized endomorphisms}

\begin{remark}\label{rmk-equ1}
Let $f:X\to X$ be a numerically $q$-polarized endomorphism of a normal projective variety $X$ over the field $k$ of characteristic $p\ge 0$.
We recall the key results of \cite[Section 6, Lemmas 6.1 - 6.6]{MZ}. When $p=0$, we have shown that the divisorial contractions, the Fano contractions, the flipping contractions and the flips of extremal rays (if exist) are all $f$-equivariant (after replacing $f$ by a positive power).
When $p>0$, \cite[Lemma 6.2, Remark 6.3 and Lemma 6.6]{MZ} still hold with the same proof.
\cite[Lemmas 6.4 - 6.5]{MZ} are based on  \cite[Lemma 6.1]{MZ}. Apparently, \cite[Lemma 6.1]{MZ} does not hold when the $f$ there is the geometric Frobenius endomorphism which is a bijection set-theoretically;
so we need to restrict to those $f : X \to X$
which are numerically $q$-polarized with $p$ and $q$ co-prime;
such $f$ is separable since $\deg f = q^{\dim(X)}$ and hence $(p,\deg f)=1$.
We remark that we cannot simply weaken the assumption to $f$ being separable
because such separable $f$ may not restrict to a separable morphism on a subvariety;
see Remark \ref{rmk-sep} for an example where \cite[Lemma 6.1]{MZ} fails to hold
yet $f$ is separable (and $p$ divides $\deg f$).
In summary, we prove Lemma \ref{finite-orbit} below, replacing \cite[Lemma 6.1]{MZ}.
\end{remark}

The proof of Lemma \ref{finite-orbit} below is almost identical to that of
\cite[Lemma 6.1]{MZ}, but for convenience, we reproduce here and highlight the argument in the proof
where we need the assumption of $f$ being numerically $q$-polarized with $p$ and $q$ co-prime.

\begin{lemma}\cite[Lemma 6.1]{MZ}\label{finite-orbit} Let $f:X\to X$  be a numerically $q$-polarized separable endomorphism of a projective variety $X$ over the field $k$ of characteristic $p$.
Assume $A\subseteq X$ is a closed subvariety with $f^{-i}f^i(A) = A$ for
all $i\ge 0$.
Assume further either one of the following conditions.
\begin{itemize}
\item[(1)] $A$ is a prime divisor of $X$.
\item[(2)] $p$ and $q$ are co-prime.
\end{itemize}
Then $M(A) := \{f^i(A)\,|\, i \in \Z\}$ is a finite set.
\end{lemma}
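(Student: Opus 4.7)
Fix an ample Cartier divisor $H$ with $f^{\ast}H\equiv qH$, set $n:=\dim X$, $d:=\dim A$, and write $A_{i}:=f^{i}(A)$. I will reduce the finiteness of $M(A)$ to a uniform bound on $\deg_{H}(A_{i}):=H^{d}\cdot A_{i}$, and then invoke boundedness of the family of $d$-dimensional subvarieties of $X$ with bounded $H$-degree. The hypothesis $f^{-i}(A_{i})=A$ (set-theoretically), together with irreducibility of $A$ and finiteness of $f^{i}$, produces a cycle-theoretic identity $(f^{i})^{\ast}[A_{i}]=m_{i}[A]$ for some positive integer $m_{i}$; pushing forward gives $m_{i}\cdot\deg(f^{i}|_{A})=q^{in}$, and projection formula together with $(f^{i})^{\ast}H^{d}\equiv q^{id}H^{d}$ yields
\begin{equation*}
\deg_{H}(A_{i})\;=\;\frac{m_{i}}{q^{i(n-d)}}\,\deg_{H}(A).\qquad(\ast)
\end{equation*}
Everything thus reduces to bounding $m_{i}/q^{i(n-d)}$ uniformly in $i$.

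In Case (1), $A$ is a prime divisor ($d=n-1$) and $m_{i}$ is literally the ramification index of $f^{i}$ along $A$. Proposition \ref{prop-rdf} gives $\operatorname{mult}_{A}(R_{f^{i}})\ge m_{i}-1$. Combining the cocycle $R_{f^{i}}=\sum_{j=0}^{i-1}(f^{j})^{\ast}R_{f}$ with the scaling $\deg_{H}((f^{j})^{\ast}D)=q^{j}\deg_{H}(D)$ for an $(n-1)$-cycle $D$ (a direct consequence of $(f^{j})^{\ast}H\equiv q^{j}H$ and $\deg f^{j}=q^{jn}$ via projection), I sum to obtain $\deg_{H}(R_{f^{i}})=\frac{q^{i}-1}{q-1}\deg_{H}(R_{f})$, whence $m_{i}-1\le \deg_{H}(R_{f^{i}})/\deg_{H}(A)=O(q^{i})$, and $(\ast)$ gives the desired uniform bound. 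In Case (2), $\gcd(p,q)=1$ so $\deg f^{i}=q^{in}$ is coprime to $p$ and every $f^{i}$ is tame; I factor $m_{i}=\prod_{j=0}^{i-1}m(f,A_{j})$, where $m(f,B)$ denotes the multiplicity of $B$ in $f^{\ast}[f(B)]$, and use $m(f,B)\cdot\deg(f|_{B})=q^{n}$. The required bound $m(f,B)\le q^{n-\dim B}$ for each $B=A_{j}$, immediate for divisorial $B$ from Case (1), in higher codimension needs a tame local analysis along $B$: my plan is to reduce to the divisorial case by passing to the Galois closure $f^{\Gal}$ (still tame by Lemma \ref{lem-gal-gen}), lifting $A$ to a component $\overline{A}\subset \overline{X}$, and slicing $\overline{A}$ by general members of $|\overline{H}^{k}|$ compatibly with the $\Gal$-action, so that the multiplicity computation reduces to a product of divisorial ramification indices each controlled as in Case (1).

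Once $\deg_{H}(A_{i})$ is uniformly bounded, the $A_{i}$ lie in a scheme of finite type parametrizing $d$-dimensional subvarieties of $X$ with bounded $H$-degree, and the rigidity of the cycle identity $(f^{i})^{\ast}[A_{i}]=m_{i}[A]$ with $A$ irreducible and $m_{i}\in\mathbb{Z}_{>0}$, combined with the saturation condition $f^{-1}(f(A_{i}))=A_{i-1}$, precludes positive-dimensional deformations of $A_{i}$ within any component, yielding finiteness of $M(A)$. The main obstacle I anticipate is the bound $m(f,B)\le q^{n-\dim B}$ of Case (2) in codimension $\ge 2$: a naive cycle count gives only $m(f,B)\le q^{n}$, and tameness must enter essentially, most plausibly through the Galois-closure reduction to the divisor estimate of Case (1).
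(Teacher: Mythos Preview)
Your approach has two genuine gaps.

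\textbf{The final step is not a proof.} A uniform bound on $\deg_H(A_i)$ does not by itself force $\{A_i\}$ to be finite: the Chow scheme of $d$-cycles of bounded degree is of finite type but typically has infinitely many closed points (think of fibres of a fibration). Your ``rigidity'' sentence is not an argument; nothing you have written explains why the saturation condition $f^{-1}(A_{i+1})=A_i$ should prevent the $A_i$ from varying in a positive-dimensional family. In Case~(1) the paper closes this gap differently: rather than bounding degree, it shows that if the $A_i$ are all distinct then infinitely many of them must be components of the \emph{fixed} divisor $\Sigma=\Sing(X)\cup f^{-1}(\Sing X)\cup R_f$, which is absurd since $\Sigma$ has only finitely many divisorial components.

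\textbf{The Case (2) bound is circular.} You want $m(f,A_j)\le q^{n-d}$ for every $j$. But your own identities give $m(f,A_j)\cdot\deg(f|_{A_j})=q^{n}$ and, by projection, $\deg(f|_{A_j})=q^{d}\,\deg_H(A_j)/\deg_H(A_{j+1})$. Hence $m(f,A_j)\le q^{n-d}$ is \emph{equivalent} to $\deg_H(A_{j+1})\le \deg_H(A_j)$, i.e.\ to the monotonicity of the very degrees you are trying to bound. Tameness does not help here: it controls the coefficient of $A_j$ in $R_f$, not the ratio of degrees along the orbit. The Galois-closure-and-slice reduction you sketch does not produce this inequality either.

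The paper's proof avoids both issues. It proves the key claim that infinitely many $A_i$ lie in $\Sigma$ (your degree computation is essentially this claim in contrapositive form), and then in Case~(2) it does \emph{not} attempt any multiplicity bound. Instead it takes an irreducible component $X_1$ of the closure of $\bigcup A_{i}\cap\Sigma$, of dimension strictly between $d$ and $n$, arranges $f^{-1}(X_1)=X_1$, and observes that $f|_{X_1}$ is again numerically $q$-polarized. The hypothesis $(p,q)=1$ enters precisely here: it guarantees $\deg(f|_{X_1})=q^{\dim X_1}$ is prime to $p$, so $f|_{X_1}$ is separable and one may apply the lemma inductively to $A_{i_1}\subset X_1$ with strictly smaller codimension. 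The role of coprimality is thus to preserve separability under restriction, not to bound cycle multiplicities.
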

\begin{proof}
Suppose that $\dim(X)=n$, and $f^{\ast}H\equiv qH$ for some ample Cartier divisor $H$
and integer $q>1$.
Let $M_{\ge0}(A):=\{f^i(A)\,|\, i \ge 0\}$.

If $M_{\ge0}(A)$ is a finite set, $f^{r_1}(A)=f^{r_2}(A)$ for some $0<r_1<r_2$.
Note that $f^{-i}f^i(A) = A$ for all $i \ge 0$.
Then for any $i\ge 0$, $f^{-i}(A)=f^{-i}f^{-sr_1}f^{sr_1}(A)=f^{-i}f^{-sr_1}f^{sr_2}(A)=f^{sr_2-sr_1-i}(A)\in M_{\ge 0}(A)$ if $s\gg 1$.
So it suffices to show $M_{\ge0}(A)$ is a finite set. It is trivial if $A=X$.

Set $k := \dim(A)<n=\dim(X)$, $A_i := f^i(A) \, (i\ge 0)$.
Let $\Sigma$ be the union of $\Sing (X)$, $f^{-1}(\Sing (X))$, and the irreducible components in the ramification divisor $R_f$ of $f$.

We first claim that $A_i$ is contained in $\Sigma$ for infinitely many $i$.
Otherwise, replacing $A$ by some $A_{i_0}$, we may assume that $A_i$ is not contained in $\Sigma$ for all $i\ge 0$.
So we have $f^{\ast}A_{i+1}= a_iA_i$ with $a_i\in \mathbb{Z}_{>0}$ and
$$q^nH^{k}\cdot A_{i+1}=(f^{\ast}H)^{k}\cdot f^{\ast}A_{i+1}=a_iq^{k}H^{k}\cdot A_{i},$$
$$1\leq H^{k}\cdot A_{i+1}=\frac{a_i}{q^{n-k}}\cdots \frac{a_1}{q^{n-k}}H^{k}\cdot A_1.$$
Thus for infinitely many $i$, $a_i\geq q^{n-k}$ and $A_i\subseteq \Sigma$, a contradiction. This proves the claim.

If we assume the condition (1), by the claim, then $f^{r_1}(A)=f^{r_2}(A)$ for some $0<r_1<r_2$.
So $|M_{\ge 0}(A)|<r_2$.

Next we assume the condition (2), $k\leq n-2$ and $|M_{\ge 0}(A)|=\infty$.
Let $B$ be the Zariski-closure of the union of those $A_{i_1}$ contained in $\Sigma$.
Then $k+1\leq \dim(B)\le n-1$, and $f^{-i}f^i(B) = B$ for all $i \ge 0$. Choose $r \ge 1$ such that $B' := f^r(B), f(B'), f^2(B'), \cdots$ all have the same number of irreducible components.
Let $X_1$ be an irreducible component of $B'$ of maximal dimension.
Then $k+1\leq \dim(X_1)\le n-1$ and $f^{-i}f^i(X_1) = X_1$ for all $i \ge 0$. By induction, $M_{\ge 0}(X_1)$ is a finite set.
So we may assume that $f^{-1}(X_1)=X_1$, after replacing $f$ by a positive power and $X_1$ by its image.
\textbf{Note that $f|_{X_1}$ is numerically $q$-polarized.}
Now the codimension of $A_{i_1}$ in $X_1$ is smaller than that of $A$ in $X$.
By induction, $M_{\ge 0}(A_{i_1})$ and hence $M_{\ge 0}(A)$ are finite.
This is a contradiction.

So under either the condition (1) or (2), $M_{\ge 0}(A)$ is always a finite set.
Therefore the lemma is proved.
\end{proof}

\begin{remark}\label{rmk-sep} If we simply assume $f$ is numerically polarized separable in Lemma \ref{finite-orbit}, then we cannot say $f|_{X_1}$ is again separable during the induction of the proof. Indeed, we have the following example.
Let $X:=\mathbb{P}^3$, $p=3$, and $f([a:b:c:d])=[a^3+acd:b^3+bcd:c^3+c^2d:d^3-cd^2]$.
Then $f$ is $3$-polarized separable.
Let $X_1:=\{c=0, d=0\}\cong \mathbb{P}^1$.
Then $f^{-1}(X_1)=X_1$ and $f|_{X_1}([a:b])=[a^3:b^3]$ which is a geometric Frobenius of $\mathbb{P}^1$.
Clearly, $f|_{X_1}$ is not separable.
Since $f|_{X_1}$ is bijective, $f^{-i}f^i(A) = A$ for any closed point $A$ of $X_1$.
However, $M(A)$ is infinite when $A= \{[1:b:0:0]\}$ and $b$ is not a root of the unity.
\end{remark}

\begin{remark}\label{rmk-div} By Lemma \ref{finite-orbit} and \cite[Lemma 6.4]{MZ}, a divisorial contraction is $f$-equivariant (after replacing $f$ by a positive power) for  numerically polarized separable $f$.
\end{remark}

Theorem \ref{polar_des} below in positive characteristic is obtained with the same proof as in \cite{MZ}.

\begin{theorem}\label{polar_des}\cite[Theorem 3.11 and Corollary 3.12]{MZ} Let $\pi:X\dasharrow Y$ be a dominant rational map between two normal projective varieties and let $f : X \to X$ and $g : Y \to Y$ be two surjective endomorphisms such that $g\circ \pi=\pi\circ f$. Then the following are true.
\begin{itemize}
\item[(1)] Suppose $f$ is numerically $q$-polarized. Then so is $g$.
\item[(2)] Suppose $\pi$ is generically finite. Then $f$ is numerically $q$-polarized if and only if so is $g$.
\end{itemize}
\end{theorem}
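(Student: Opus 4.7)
The plan is to follow \cite[Theorem 3.11 and Corollary 3.12]{MZ}: the argument in characteristic zero extends verbatim to positive characteristic because the analytic engine, Proposition \ref{prop-fx-x}, is characteristic-free. That proposition converts the $q$-polarization of a cone-preserving linear map $\varphi$ into the uniform norm bound $\|\varphi^i\|/q^i \le N$ for all $i\in \Z$, and back. The whole proof is an exercise in transporting this norm bound along the rational map $\pi$. As setup, I would resolve the indeterminacy of $\pi$ via the normalization of its graph; this produces a well-defined pullback $\pi^*:\N^1(Y)\to \N^1(X)$ satisfying $f^*\circ \pi^* = \pi^*\circ g^*$ (from $g\circ \pi=\pi\circ f$). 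Since $\pi$ is dominant, $\pi^*$ is injective (any curve on $Y$ lifts up to positive multiple to a curve on $X$), and both $f^*$ and $g^*$ are invertible on the respective $\N^1$-spaces and preserve their nef cones bijectively.

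For (1), assume $f^*H\equiv qH$ with $H$ ample on $X$. Applying Proposition \ref{prop-fx-x} to $f^*$ on $\N^1(X)$ with $C:=\Nef(X)$, one obtains $N>0$ with $\|(f^*)^i\|/q^i \le N$ for all $i\in\Z$, and that $f^*$ is diagonalizable with all eigenvalues of modulus $q$. The $f^*$-invariant subspace $V:=\pi^*(\N^1(Y))\subset \N^1(X)$ is, via the isomorphism $\pi^*:\N^1(Y)\to V$, conjugate to $g^*$ on $\N^1(Y)$, so the same norm bound holds for $g^*$. Applying Proposition \ref{prop-fx-x} in the reverse direction to $g^*$ on $\N^1(Y)$ with $C:=\Nef(Y)$ (salient and spanning since $Y$ is projective, and $g^{*\pm 1}$-invariant), one obtains $H_Y$ in the interior of $\Nef(Y)$, i.e.\ an ample class, with $g^*H_Y\equiv qH_Y$; hence $g$ is numerically $q$-polarized.

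For (2), the forward direction is (1). For the converse, assume $g$ is numerically $q$-polarized and $\pi$ is generically finite. After resolution and Stein factorization one has $\pi=\tau\circ p$ with $p:X\to Z$ birational and $\tau:Z\to Y$ finite surjective, and Lemma \ref{app-lem-stein} provides a surjective endomorphism $f_Z$ on $Z$ with $\tau\circ f_Z=g\circ\tau$ and $f_Z\circ p=p\circ f$. Since $\tau$ is finite surjective, $\tau^*H_Y$ is ample and $f_Z^*(\tau^*H_Y)\equiv q\tau^*H_Y$, so $f_Z$ is numerically $q$-polarized. The main obstacle is the last descent, from $f_Z$ on $Z$ to $f$ on $X$ along the birational $p$, because $p^*\tau^*H_Y$ is only big and nef on $X$, not ample. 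I would handle it by the same norm-transfer mechanism: $p^*\N^1(Z)\subset \N^1(X)$ is an $f^*$-invariant subspace inheriting the norm bound from $f_Z^*$, while the complement is spanned by $p$-exceptional prime divisors, which form an $f$-periodic finite set by Lemma \ref{finite-orbit}(1) (applicable in the prime-divisor case without any coprimality hypothesis). Combined, $\|(f^*)^i\|/q^i$ is bounded on all of $\N^1(X)$, and a final application of Proposition \ref{prop-fx-x} produces an ample $H_X$ with $f^*H_X\equiv qH_X$.
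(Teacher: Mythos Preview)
Your argument for (1) is correct and is essentially the approach of \cite{MZ}: the two-sided norm bound of Proposition~\ref{prop-fx-x} passes to the $f^*$-invariant subspace $\pi^*\N^1(Y)$ and hence, via the isomorphism $\pi^*$, to $g^*$.

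For (2) there is a genuine gap in the birational step. First, invoking Lemma~\ref{finite-orbit} is circular: its hypothesis already requires the endomorphism to be numerically $q$-polarized (and separable), which is exactly what you are trying to prove for $f$. You can show directly, using only finiteness of $f_Z$, that $f$ permutes the finitely many $p$-exceptional prime divisors $E_j$; but that is not the real difficulty.

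The real difficulty is the sentence ``Combined, $\|(f^*)^i\|/q^i$ is bounded on all of $\N^1(X)$.'' Periodicity of the $E_j$ only yields, after passing to a power, $f^*E_j=a_jE_j$ with $a_j\in\Z_{>0}$; the two-sided norm bound on the exceptional subspace is equivalent to $a_j=q$ for every $j$, and nothing you have written forces this. What is missing is a constraint coming from the nef cone: by negativity of the exceptional intersection form, the only nef class in the span of the $E_j$ is zero. Since $(f^*)^{\pm 1}$ preserves $\Nef(X)$, any nonzero nef eigenvector of $f^*$ or of $(f^*)^{-1}$ must therefore project nontrivially to $p^*\N^1(Z)$, which pins both the largest and the smallest eigenvalue modulus of $f^*$ to $q$. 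Only with this extra input do you obtain $a_j=q$ and the required bound; periodicity alone does not suffice.
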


\begin{remark}\label{rmk-num-lin} Let $f:X\to X$ be a separable surjective endomorphism of a normal projective variety $X$.
Then we may replace ``numerically $q$-polarized by ``$q$-polarized'' in Theorem \ref{polar_des}.
In the case of characteristic $0$, Nakayama and Zhang showed the equivalence of $f$ being numerically $q$-polarized and $f$ being $q$-polarized (cf.~\cite[Lemma 2.3]{Na-Zh}; indeed their proof works for any projective $X$). For arbitrary characteristic, the same equivalence holds, with a new proof given
in Theorem \ref{thm-num-lin}.
\end{remark}

\begin{remark}\label{rmk-emmp} We here summarize the version of the polarized equivariant MMP in
positive characteristic by using Remarks \ref{rmk-equ1} and \ref{rmk-div}, Theorem \ref{polar_des} and Remark \ref{rmk-num-lin}.
Let $X$ be a normal projective variety over the field $k$ of characteristic $p>0$. Let $f:X\to X$ be a $q$-polarized separable endomorphism. Suppose there exists a finite sequence of MMP:
$$X=X_1\dashrightarrow \cdots \dashrightarrow X_i \dashrightarrow \cdots \dashrightarrow X_r,$$ with every $X_i \dashrightarrow X_{i+1}$ a divisorial contraction, a flip or a Fano contraction, of a $K_{X_i}$-negative extremal ray.
Assume that $(p,q)=1$.
Then after replacing $f$ by a positive power, this sequence is $f$-equivariant, and $f=f_1$ descends to $q$-polarized $f_i$ on each $X_i$.

Note that if there is no flip involved in the sequence, then without the assumption $(p,q)=1$, this sequence is still $f$-equivariant (after replacing $f$ by a positive power), and $f=f_1$ descends to $q$-polarized separable $f_i$ on each $X_i$; see \cite[Remark 6.3]{MZ} for the Fano contraction, Remark \ref{rmk-div} for the divisorial contraction and  Lemma \ref{sepdeg} for the separable property.
\end{remark}

\section{Endomorphisms compatible with a fibration}
We recall the following useful result from \cite[Theorem, Page 220]{Dan}.
Note that the assumption of normality below is necessary.
\begin{lemma}\label{om-lem}
Let $f:X\to Y$ be a finite surjective morphism of two varieties
with $Y$ being normal.
Then $f$ is an open map.
\end{lemma}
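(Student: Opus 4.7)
The plan is to reduce the openness claim to local algebra, apply the classical going-down theorem of Cohen--Seidenberg, and then bridge the resulting generizing property to topological openness via Chevalley's constructibility theorem.

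First I would localize on $Y$: since openness is a local property on the target, I may assume $Y = \operatorname{Spec} A$ is affine. Because $f$ is finite (hence affine), $X = \operatorname{Spec} B$ with $A \hookrightarrow B$ a finite injective ring extension, where $A$ is a normal (i.e., integrally closed) integral domain. If $X$ is irreducible (as is implicit in the paper's use of ``variety''), then $B$ is a domain; otherwise one decomposes $X$ into its irreducible components, each of which dominates $Y$ by finite surjectivity, and reduces to that case via the minimal primes of $B$.

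Next I would invoke the going-down theorem: for a finite (hence integral) extension $A \hookrightarrow B$ of integral domains with $A$ integrally closed, going-down holds classically. Geometrically this says that $f$ is \emph{generizing} --- whenever $y' \in f(U)$ and $y$ is a generalization of $y'$ in $Y$, there is a generalization of some point of $U$ mapping to $y$.

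Finally I would conclude by constructibility. Since $f$ is of finite type between Noetherian schemes, Chevalley's theorem guarantees that the image $f(U)$ of any open $U \subseteq X$ is constructible in $Y$. A constructible subset of a Noetherian scheme that is stable under generalization is automatically open; combined with the generizing property just established, this forces $f(U)$ to be open, so $f$ is an open map. The principal technical point is precisely this bridge from the algebraic going-down statement to the topological openness conclusion: it requires both generization-stability (from going-down) and constructibility of images (from Chevalley), neither of which alone would be enough.
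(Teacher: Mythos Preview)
Your argument is correct and is the standard proof: going-down for an integral extension of a normal domain gives the generizing property, Chevalley gives constructibility of images, and a constructible set stable under generalization in a Noetherian scheme is open. Note, however, that the paper does not supply its own proof of this lemma at all; it merely recalls the statement from \cite[Theorem, Page 220]{Dan} and moves on, so there is no in-paper argument to compare against. One small quibble: your parenthetical reduction to irreducible components in the reducible case is not quite right as stated (finite surjectivity onto $Y$ does not force every component of $X$ to dominate $Y$), but since the paper's convention makes $X$ a variety and hence irreducible, this side remark is unnecessary and the main line of your proof stands.
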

%

\begin{lemma}\label{open-lem} Let $f:X\to Y$ be a finite surjective morphism of two varieties with $Y$ being normal.
Let $S$ be a subset of $Y$.
Then $f^{-1}(\overline{S})=\overline{f^{-1}(S)}$.
\end{lemma}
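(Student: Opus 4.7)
The plan is to prove the two inclusions separately, using continuity of $f$ for one direction and openness of $f$ (supplied by Lemma~\ref{om-lem}) for the other.

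First I would handle the easy inclusion $\overline{f^{-1}(S)}\subseteq f^{-1}(\overline S)$: since $f$ is a morphism it is continuous, so $f^{-1}(\overline S)$ is closed in $X$; as it contains $f^{-1}(S)$, it must contain the closure $\overline{f^{-1}(S)}$.

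For the reverse inclusion $f^{-1}(\overline S)\subseteq \overline{f^{-1}(S)}$, the key tool is Lemma~\ref{om-lem}, which gives that $f$ is an open map (this uses normality of $Y$ crucially). I would argue by contradiction: take $x\in f^{-1}(\overline S)$ and suppose $x\notin \overline{f^{-1}(S)}$. Then there is an open neighbourhood $U$ of $x$ in $X$ with $U\cap f^{-1}(S)=\emptyset$. Openness of $f$ makes $f(U)$ an open neighbourhood of $f(x)$ in $Y$, and since $f(x)\in\overline S$ we obtain a point $y\in f(U)\cap S$. Any preimage $u\in U$ with $f(u)=y$ then lies in $U\cap f^{-1}(S)$, contradicting the choice of $U$.

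There is no real obstacle here beyond invoking the correct previous lemma; the substantive content is entirely packaged inside Lemma~\ref{om-lem} (openness of finite surjective morphisms onto a normal target). Once that is in hand, the argument above is a standard continuous-and-open preimage/closure manipulation, and it would be written in just a few lines.
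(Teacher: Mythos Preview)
Your proposal is correct and essentially identical to the paper's own proof: both dispatch the inclusion $\overline{f^{-1}(S)}\subseteq f^{-1}(\overline S)$ by continuity, and for the reverse inclusion both pick $x\in f^{-1}(\overline S)\setminus\overline{f^{-1}(S)}$, choose an open $U\ni x$ missing $f^{-1}(S)$, and use Lemma~\ref{om-lem} to conclude that $f(U)$ is an open neighbourhood of $f(x)$ disjoint from $S$, contradicting $f(x)\in\overline S$.
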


\begin{proof} Clearly, $\overline{f^{-1}(S)}\subseteq f^{-1}(\overline{S})$. Suppose $x\in f^{-1}(\overline{S})-\overline{f^{-1}(S)}$.
Then there is an open neighborhood $x\in U$ with $U\cap \overline{f^{-1}(S)}=\emptyset$. By Lemma \ref{om-lem}, $f(U)$ is open. Since $f(U)\cap S=\emptyset$, $f(x)\in f(U)\cap \overline{S}=\emptyset$, a contradiction.
\end{proof}

\begin{lemma}\label{surj-lem} Let $\pi:X\to Y$ be a surjective morphism of two normal varieties with connected fibres. Let $f:X\to X$ and $g:Y\to Y$ be two surjective endomorphisms such that $g\circ \pi=\pi\circ f$. Then $f|_{\pi^{-1}(Z)}:\pi^{-1}(Z)\to \pi^{-1}(g(Z))$ is surjective for any subset $Z$ of closed points in $Y$.
\end{lemma}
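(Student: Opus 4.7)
The inclusion $f(\pi^{-1}(Z))\subseteq\pi^{-1}(g(Z))$ is immediate from $g\circ\pi=\pi\circ f$, so only the reverse inclusion needs proof. Writing $\pi^{-1}(Z)=\bigcup_{z\in Z}\pi^{-1}(z)$ and $\pi^{-1}(g(Z))=\bigcup_{z\in Z}\pi^{-1}(g(z))$, the claim reduces to the case $Z=\{z\}$ of a single closed point, that is, to the fibrewise surjectivity $f(F_z)=F_{g(z)}$, where $F_z:=\pi^{-1}(z)$ and $F_{g(z)}:=\pi^{-1}(g(z))$.

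The first step is to combine the commutativity of the square with the surjectivity of $f$ to get
\[
F_{g(z)}\;=\;f\bigl(f^{-1}(F_{g(z)})\bigr)\;=\;f\bigl(\pi^{-1}(g^{-1}(g(z)))\bigr)\;=\;f(F_z)\,\cup\,\bigcup_{z'\ne z}f(F_{z'}),
\]
where the union runs over the closed points $z'\in g^{-1}(g(z))$ different from $z$, and the fibres $F_{z'}$ are pairwise disjoint because $\pi$ is a function. In the projective setting relevant to this paper, $f$ is proper, so each $f(F_{z'})$ is closed in $F_{g(z)}$, and connected because $F_{z'}$ is.

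To extract the fibrewise surjectivity $f(F_z)=F_{g(z)}$ from this covering, one needs a little more than bare connectedness of $F_{g(z)}$. The most direct route is to combine (i) the irreducibility and equidimensionality of the fibres of $\pi$---additional structure implicit in the applications of this lemma, compare Lemmas \ref{prop-irr} and \ref{fibres-rc+irr}---with (ii) the finiteness of $f$, which forces $\dim f(F_z)=\dim F_z=\dim F_{g(z)}$. A closed subvariety of full dimension inside an irreducible variety must fill it out, so $f(F_z)=F_{g(z)}$.

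\textbf{Main obstacle.} The genuine delicacy is passing from the global covering $F_{g(z)}=\bigcup_{z'}f(F_{z'})$ to the fibrewise surjectivity for the chosen $z$: connectedness of $F_{g(z)}$ alone permits partial overlaps without forcing any single $f(F_{z'})$ to be the whole fibre. A more intrinsic reformulation is to consider the canonical morphism $\psi:=(f,\pi):X\to X\times_{\pi,Y,g}Y$; the lemma is equivalent to surjectivity of $\psi$, and $\psi$ is proper as well as dominant (since the dominant $f:X\to X$ of irreducible varieties fixes the generic point). Thus $\psi$ is surjective as soon as its target is irreducible, a property that follows from the normality of $Y$ together with the connectedness of the fibres of $\pi$; this bypasses the ad hoc dimension-chasing above.
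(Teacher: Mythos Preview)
Your reduction to a single closed point is correct and matches the paper. After that, both of your proposed routes have real gaps.

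The first route is circular. You invoke irreducibility and equidimensionality of the fibres of $\pi$, pointing to Lemmas \ref{prop-irr} and \ref{fibres-rc+irr}; but in the paper both of those lemmas are \emph{consequences} of the present lemma (through Lemma \ref{sigma-lem}), so they cannot be used here. At best this proves a weaker statement under extra hypotheses that the lemma does not assume.

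The second route, via the fibre product $W=X\times_{\pi,Y,g}Y$ and the morphism $\psi=(f,\pi)$, is a correct reformulation: surjectivity of $\psi$ is exactly the claim. Your $\psi$ is indeed finite (since $p_1\circ\psi=f$ is finite and $p_1$ is separated), hence closed, and its image contains the unique irreducible component $W_0$ of $W$ dominating $X$. The crux, however, is your bare assertion that $W$ is irreducible. From normality of $Y$ and connectedness of the fibres of $\pi$ one gets that (a) $k(Y)$ is separably closed in $k(X)$, so the generic fibre of $p_1:W\to X$ is a single point and $W_0$ is unique; and (b) $W$ is connected (since $p_2:W\to Y$ has connected fibres over the irreducible $Y$). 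Neither fact, nor their conjunction, rules out further components of $W$ lying over proper closed subsets of $X$; and Lemma \ref{om-lem} does not apply to $p_1$ or to $\psi$ because $W$ need not be a variety. So the irreducibility of $W$ is unjustified, and with it the surjectivity of $\psi$.

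The paper's argument is different and avoids the fibre product entirely. The key input you are missing is that $f:X\to X$ itself is an \emph{open} map, by Lemma \ref{om-lem} (a finite surjective morphism to a normal variety is open). With $S:=g^{-1}(g(y))\setminus\{y\}$ and $U:=X\setminus\pi^{-1}(S)$, one computes $f(U)\cap\pi^{-1}(g(y))=f(\pi^{-1}(y))$; since $f(U)$ is open this makes $f(\pi^{-1}(y))$ open in $\pi^{-1}(g(y))$, and it is also closed because $f$ is finite. Connectedness of the fibre $\pi^{-1}(g(y))$ then forces $f(\pi^{-1}(y))=\pi^{-1}(g(y))$. This uses only the stated hypotheses and no structural facts about the fibres beyond their connectedness.
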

\begin{proof} Clearly, it suffices to consider the case when $Z$ is a closed point of $Y$.
Suppose there is a closed point $y$ of $Y$ such that $f|_{\pi^{-1}(y)}:\pi^{-1}(y)\to \pi^{-1}(g(y))$ is not surjective.
Let $S=g^{-1}(g(y))-\{y\}$.
Then $S\neq \emptyset$ and $U:=X-\pi^{-1}(S)$ is a proper open dense subset of $X$.
By Lemma \ref{om-lem}, $f(U)$ is an open dense subset of $X$. In particular, $f(U)\cap \pi^{-1}(g(y))$ is open in $\pi^{-1}(g(y))$. Note that $f(U)=(X-\pi^{-1}(g(y)))\cup f(\pi^{-1}(y))$. So $f(U)\cap \pi^{-1}(g(y))=f(\pi^{-1}(y))$ is open in $\pi^{-1}(g(y))$. Since $f$ is also a closed map, the set $f(\pi^{-1}(y))$ is both open and closed in the connected fibre $\pi^{-1}(g(y))$ and hence $f(\pi^{-1}(y))=\pi^{-1}(g(y))$ since $\pi$ is surjective.
\end{proof}

\begin{lemma}\label{sigma-lem} Let $\pi:X\to Y$ be a surjective morphism of two normal varieties with connected fibres. Let $f:X\to X$ and $g:Y\to Y$ be two surjective endomorphisms such that $g\circ \pi=\pi\circ f$.
Let $m\ge 0$ be an integer and $\Sigma_m:=\{Z\subseteq Y\,|\, Z \,\text{ is irreducible closed},\, \dim(Z)=m,\, \text{ and }\, \pi^{-1}(Z)\, \text{ is reducible \,}\}$.
Then $g^{-1}(\Sigma_m)\subseteq \Sigma_m$.
\end{lemma}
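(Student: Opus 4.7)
The plan is to fix any $W\in g^{-1}(\Sigma_m)$, which I read as an irreducible closed subvariety of $Y$ of dimension $m$ with $g(W)=Z$ for some $Z\in\Sigma_m$ (equivalently, $W$ is an irreducible component of $g^{-1}(Z)$; these two descriptions agree because the surjective endomorphism $g$ of $Y$ is finite, so $g^{-1}(Z)$ is equidimensional of dimension $m$). The goal is then to show $\pi^{-1}(W)$ is reducible, which will give $W\in\Sigma_m$.

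First I would exploit the commutation relation $g\circ\pi=\pi\circ f$. On the one hand, $\pi(f(\pi^{-1}(W)))=g(\pi(\pi^{-1}(W)))\subseteq g(W)=Z$, so $f(\pi^{-1}(W))\subseteq\pi^{-1}(Z)$. On the other hand, Lemma \ref{surj-lem}, applied to the set of closed points of $W$, asserts that $f$ maps the closed points of $\pi^{-1}(W)$ onto the closed points of $\pi^{-1}(Z)$. Since closed points are dense in every variety over the algebraically closed field $k$, these two facts together yield $\overline{f(\pi^{-1}(W))}=\pi^{-1}(Z)$.

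The remainder is purely topological. If $\pi^{-1}(W)$ were irreducible, then its continuous image $f(\pi^{-1}(W))$ would be irreducible, and hence so would its closure $\pi^{-1}(Z)$. This contradicts $Z\in\Sigma_m$, which says precisely that $\pi^{-1}(Z)$ is reducible. Therefore $\pi^{-1}(W)$ is reducible, and $W\in\Sigma_m$, proving $g^{-1}(\Sigma_m)\subseteq\Sigma_m$.

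The only place I foresee needing care is the application of Lemma \ref{surj-lem}: that lemma is stated for a subset of closed points of $Y$, so I pass to the closed points of $W$ and then use density to recover the statement at the level of the (Zariski) closure of the image. The dimension and irreducibility bookkeeping, and the fact that $g(W)$ is actually closed irreducible of dimension $m$, are automatic here, since the very condition $g(W)\in\Sigma_m$ presupposes this, and in any case any surjective endomorphism $g$ of the (projective, normal) target $Y$ is finite in the settings of this paper. No deeper ingredient beyond Lemma \ref{surj-lem} and the topological irreducibility of continuous images is required.
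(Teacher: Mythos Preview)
Your argument is correct and follows essentially the same route as the paper's proof: both pick an irreducible closed $m$-dimensional $Z'$ (your $W$) with $g(Z')\in\Sigma_m$, invoke Lemma~\ref{surj-lem} to see that $f$ carries $\pi^{-1}(Z')$ onto $\pi^{-1}(g(Z'))$, and then conclude reducibility of the source from reducibility of the target. The only difference is cosmetic: the paper states the surjectivity $f|_{\pi^{-1}(Z')}\colon\pi^{-1}(Z')\to\pi^{-1}(Z)$ directly, whereas you are more scrupulous about the fact that Lemma~\ref{surj-lem} is phrased for sets of closed points and therefore pass through a density/closure step (which is in any case redundant once $f$ is finite, since then $f(\pi^{-1}(W))$ is already closed).
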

\begin{proof} Let $Z'$ be an $m$-dimensional irreducible closed subvariety of $Y$ such that $Z:=g(Z')\in \Sigma_m$.
Then $f|_{\pi^{-1}(Z')}:\pi^{-1}(Z')\to \pi^{-1}(Z)$ is surjective by Lemma \ref{surj-lem}.
Since $\pi^{-1}(Z)$ is reducible, so is $\pi^{-1}(Z')$.
\end{proof}

\begin{lemma}\label{lem-inv-des} Let $\pi:X\to Y$ be a surjective proper morphism of two normal varieties with connected fibres. Let $f:X\to X$ and $g:Y\to Y$ be two surjective endomorphisms such that $g\circ \pi=\pi\circ f$.
Suppose $E$ is a closed subset of $X$ such that $f^{-i}(E)=E$ for some $i>0$.
Then $g^{-j}(\pi(E))=\pi(E)$ for some $j>0$.
\end{lemma}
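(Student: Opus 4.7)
The plan is to reduce to the case $i=1$ and then chase fibres using Lemma \ref{surj-lem}. Replacing $f$ by $f^i$ and $g$ by $g^i$, I may assume $f^{-1}(E)=E$; the desired conclusion for the iterates translates into $g^{-i}(\pi(E))=\pi(E)$ for the original pair. Since $f$ is surjective, the hypothesis $f^{-1}(E)=E$ also forces $f(E)=f(f^{-1}(E))=E$, so $E$ is completely $f$-invariant. Set $F:=\pi(E)$, which is closed since $\pi$ is proper. The easy inclusion $F\subseteq g^{-1}(F)$ follows at once from the commutation: $g(F)=g(\pi(E))=\pi(f(E))=\pi(E)=F$.

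For the reverse inclusion $g^{-1}(F)\subseteq F$, I would argue pointwise. Let $y\in g^{-1}(F)$ be any closed point, and write $g(y)=\pi(e)$ for some closed point $e\in E$. Applying Lemma \ref{surj-lem} to the singleton $Z=\{y\}$, the restriction
\[
f|_{\pi^{-1}(y)}\colon \pi^{-1}(y)\longrightarrow \pi^{-1}(g(y))
\]
is surjective, so there exists $x'\in \pi^{-1}(y)$ with $f(x')=e$. At this point the full backward invariance $f^{-1}(E)=E$ is decisive: since $e\in E$, we obtain $x'\in f^{-1}(e)\subseteq f^{-1}(E)=E$, and hence $y=\pi(x')\in \pi(E)=F$. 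Thus $g^{-1}(F)\subseteq F$, and combining the two inclusions yields $g^{-1}(F)=F$; unwinding the reduction gives $g^{-i}(\pi(E))=\pi(E)$, so one may take $j=i$.

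There is no real obstacle beyond invoking Lemma \ref{surj-lem}: that lemma is the one place where the standing hypotheses of connected fibres and normality enter (through the openness of finite maps in Lemma \ref{om-lem}), and without its fibrewise surjectivity one could not lift the chosen $e\in E$ to an $f$-preimage \emph{inside} the specific fibre $\pi^{-1}(y)$. It is also worth highlighting that the strength of the hypothesis $f^{-i}(E)=E$, as opposed to mere forward invariance $f^i(E)=E$, is exactly what is used when concluding $x'\in E$ from $f(x')=e\in E$; a purely forward-invariant $E$ would not suffice.
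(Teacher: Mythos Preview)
Your proof is correct and uses the same key ingredient as the paper's, namely Lemma~\ref{surj-lem}, to pull points of $g^{-1}(\pi(E))$ back into $E$ via the fibrewise surjectivity of $f$. The only difference is cosmetic: the paper first reduces to the case where $E$ is irreducible (using that after a power $f$ fixes each component of $E$, whence $f^{-1}$ does too) and then shows that any irreducible closed $F'$ with $g(F')=F$ must equal $F$, whereas you argue directly at the level of closed points and avoid the irreducibility reduction altogether. Your route is marginally cleaner for that reason; the paper's component-wise argument and your pointwise argument are two phrasings of the same lifting step.
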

\begin{proof} It suffices for us to consider the case when $E$ is irreducible and $f^{-1}(E)=E$ after replacing $f$ by a positive power.
Let $F:=\pi(E)$.
Since $\pi$ is proper, $F$ is irreducible closed in $Y$.
Let $F'$ be any irreducible closed subset of $Y$ such that $g(F')=F$.
By Lemma \ref{surj-lem}, $E\subseteq f(\pi^{-1}(F'))$.
So $E= f(\pi^{-1}(F'))\cap f(f^{-1}(E))=f(\pi^{-1}(F')\cap f^{-1}(E))$.
Then $\pi^{-1}(F')\cap E= E$ and hence $F=\pi(E)\subseteq F'$.
Since $F'$ is irreducible and $\dim(F')=\dim(F)$, $F'=F$.
Note that $g(F)=F$.
So the lemma is proved.
\end{proof}

\section{$Q$-abelian case}\label{sec-qab}

In the case of characteristic $0$, the following result was proved in \cite[Lemma 2.12]{Na-Zh}. It still holds
for any characteristic, with a modified proof below.

\begin{lemma}\label{lem-ac}
Let $X$ be a $Q$-abelian variety.
Then there is a quasi-\'etale cover $\pi_A:A\to X$ such that the following hold.
\begin{itemize}
\item[(1)] $A$ is an abelian variety.
\item[(2)] $\pi_A$ is Galois.
\item[(3)] If there is another quasi-\'etale cover $\pi_B:B\to X$ from an abelian variety $B$,
then there is an \'etale morphism $\tau:B\to A$ such that $\pi_B=\pi_A\circ \tau$.
\end{itemize}
We call $\pi_A$ the {\bf Albanese closure} of $X$ in codimension one.
\end{lemma}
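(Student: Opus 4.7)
The plan is to construct $\pi_A$ as the Galois closure of an initial quasi-\'etale cover from an abelian variety, use Zariski--Nagata purity of branch locus to keep $A$ an abelian variety, and establish universality by choosing $A$ of minimal degree among Galois quasi-\'etale abelian covers of $X$.

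For existence together with (1) and (2): by the definition of $Q$-abelian, fix a quasi-\'etale cover $\pi_0\colon B_0\to X$ with $B_0$ an abelian variety, and take $\pi_A\colon A\to X$ to be the Galois closure of $\pi_0$; this gives (2) immediately. The Galois closure of a quasi-\'etale morphism between normal varieties is again quasi-\'etale, since any divisorial branch of $A\to X$ must sit over a divisorial branch of $\pi_0$, which by hypothesis has none. Consequently $A\to B_0$ is quasi-\'etale, and because $B_0$ is smooth, Zariski--Nagata purity of the branch locus upgrades it to a finite \'etale morphism. A connected finite \'etale cover of an abelian variety is again an abelian variety (it is an isogeny after choosing an origin), which gives (1).

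For (3), given another quasi-\'etale cover $\pi_B\colon B\to X$ by an abelian variety, I would consider a connected component $W$ of the normalization $(A\times_X B)^\nu$. By base change, the projections $W\to A$ and $W\to B$ are quasi-\'etale, and purity applied on the smooth varieties $A$ and $B$ upgrades them to finite \'etale; hence $W$ is an abelian variety isogenous to both $A$ and $B$. To pull the desired \'etale morphism $\tau\colon B\to A$ out of this, it is essential to have chosen $\pi_A$ \emph{of minimal degree} among Galois quasi-\'etale abelian covers of $X$: such a minimum exists because the set of such degrees is a non-empty subset of the positive integers. Minimality implies that no strictly smaller Galois quasi-\'etale abelian cover of $X$ can appear as an intermediate cover, which is what ultimately forces every abelian cover $B$ to factor through $A$.

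The main obstacle I expect is verifying rigorously that minimality of $\deg\pi_A$ entails the factorization in (3). The argument is a Bieberbach-type statement: in the orbifold fundamental group of $X_{\reg}$ (or its tame \'etale analogue in positive characteristic), the subgroup $\pi_1(A)$ cut out by the minimal-degree cover plays the role of the maximal ``translation'' lattice, and every abelian cover $B\to X$ corresponds to a sublattice thereof. In characteristic zero this is classical; in positive characteristic it is carried out via the fibre product $W$ above, using that the isogenies $W\to A$ and $W\to B$ are controlled by finite abelian subgroup schemes of $W$, and that the minimality of $\deg\pi_A$ prevents the Galois closure of $W\to X$ from being a strictly smaller Galois quasi-\'etale abelian cover---forcing the appropriate compatibility that produces $\tau$.
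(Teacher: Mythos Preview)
Your construction of $A$ via Galois closure and purity is fine for (1) and (2), and matches the paper. The gap is in (3): you never actually prove that minimality of $\deg\pi_A$ forces the factorization $\tau\colon B\to A$. The last paragraph gestures at orbifold fundamental groups and a ``Bieberbach-type'' statement but gives no argument; the sentence about minimality ``preventing the Galois closure of $W\to X$ from being a strictly smaller cover'' is confused (that Galois closure has degree at least $\deg\pi_A$, not less). There is also a mismatch: you first build $A$ as the Galois closure of a fixed $\pi_0$, then later require $A$ to have minimal degree --- these need not coincide, and you never reconcile them.

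The missing ingredient is a concrete group-theoretic step, and the paper supplies it directly rather than through a minimality appeal. Write $X\cong A/G_A$ with $G_A$ a finite subgroup of $\operatorname{Aut}_{\mathrm{var}}(A)$. Let $G_0\subset G_A$ be the subgroup of translations; then $A/G_0$ is again an abelian variety and $A/G_0\to X$ is still Galois and quasi-\'etale, so one replaces $A$ by $A/G_0$ and assumes $G_A$ contains no translation. Now, given any quasi-\'etale abelian cover $\pi_B\colon B\to X$, pass (via fibre product and Galois closure, as you outline) to a common Galois abelian cover $C\to X$ dominating both; then $A=C/H_A$ and $B=C/H_B$ with $H_A,H_B\subset G_C$. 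Since $A$ and $B$ are abelian varieties, $H_A$ and $H_B$ consist of translations of $C$. The normalization ``$G_A$ has no translation'' forces $H_A$ to be the \emph{entire} translation subgroup of $G_C$; hence $H_B\subseteq H_A$, and the quotient $B=C/H_B\to C/H_A=A$ is the desired \'etale $\tau$. This is exactly the content your minimality heuristic is reaching for (the minimal-degree Galois abelian cover is $C$ modulo all translations), but phrased constructively it works uniformly in every characteristic without any fundamental-group language.
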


\begin{proof}
Since $X$ is $Q$-abelian, there is a quasi-\'etale morphism $\pi_A:A\to X$ where $A$ is an abelian variety. Let $p:A'\to A$ be the Galois closure of $\pi_A$. Then $p$ is quasi-\'etale and hence \'etale by the purity of branch loci. Note that $A'$ is an abelian variety (cf.~\cite[Chapter IV, 18]{Mu}). So we may assume $\pi_A$ is Galois.
Then $X\cong A/G_A$, where $G_A$ is a finite subgroup of $\Aut_{var}(A)$ (the automorphism group of the variety $A$).
Let $G_0=G_A\cap (translations\, on\, A)$.
Then $A/G_0\to X$ is quasi-\'etale and Galois and $A/G_0$ is an abelian variety.
So we may assume $G_A$ contains no translation.
We next show that $\pi_A$ satisfies the universal property.

Suppose there is another quasi-\'etale cover $\pi_B:B\to X$ from an abelian variety $B$.
By taking the base change and the Galois closure, there exist \'etale morphisms $C\to A$ and $C\to B$ over $X$ such that the composition $C\to X$ is Galois.
Clearly, $C$ is an abelian variety.
Then $X\cong C/G_C$ and $A\cong C/H_A$ where $G_C$ is a finite subgroup of $\Aut_{var}(C)$ and $H_A=\Gal(C/A)$ is a subgroup of $G_C$.
Similarly, $B\cong C/H_B$ where $H_B=\Gal(C/B)$ is a subgroup of $G_C$.
Since $A$ and $B$ are both abelian varieties, both $H_A$ and $H_B$ are translation subgroups of $C$.
By our construction of $A$, $H_A=G_C\cap (translations\, on\, C)$.
So $H_B$ is a subgroup of $H_A$.
Hence there is a natural \'etale quotient $\tau:B\to A=B/(H_A/H_B)$ such that $\pi_B=\pi_A\circ \tau$.
\end{proof}
\begin{corollary}\label{cor-lift} Let $f:X\to X$ be a quasi-\'etale endomorphism of a $Q$-abelian variety $X$. Then there are a quasi-\'etale morphism $\pi_A:A\to X$ and an \'etale endomorphism $f_A:A\to A$ such that $\pi_A\circ f_A=f\circ\pi_A$.
\end{corollary}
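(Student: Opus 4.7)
The plan is to deduce the existence of the lift $f_A$ directly from the universal property of the Albanese closure in codimension one, supplied by Lemma \ref{lem-ac}.

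First I would invoke Lemma \ref{lem-ac} to produce a Galois quasi-\'etale cover $\pi_A : A \to X$ with $A$ an abelian variety, having the property that any quasi-\'etale cover of $X$ from an abelian variety factors through $\pi_A$ by an \'etale morphism. Next, I would verify that the composite
\[
f \circ \pi_A : A \longrightarrow X
\]
is itself a quasi-\'etale cover of $X$ from an abelian variety. The target is $X$ and the source $A$ is already abelian, so the only nontrivial point is quasi-\'etaleness: since $f$ is quasi-\'etale (hence separable, with $R_f = 0$) and $\pi_A$ is quasi-\'etale (hence separable, with $R_{\pi_A} = 0$), the ramification divisor formula of Proposition \ref{prop-rdf} applied to the composition gives
\[
R_{f \circ \pi_A} \;=\; \pi_A^{\ast} R_f + R_{\pi_A} \;=\; 0,
\]
so $f \circ \pi_A$ is quasi-\'etale, as required.

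Finally, I would apply the universal property in Lemma \ref{lem-ac}(3) to the quasi-\'etale cover $\pi_B := f \circ \pi_A$ from the abelian variety $B := A$. This yields an \'etale morphism $f_A : A \to A$ with
\[
f \circ \pi_A \;=\; \pi_A \circ f_A,
\]
which is precisely the conclusion of the corollary. In particular $f_A$ is automatically a surjective endomorphism of $A$, since $f \circ \pi_A$ is surjective and $\pi_A$ is finite.

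There is no serious obstacle in this argument: the work is entirely contained in Lemma \ref{lem-ac}, and the only subtlety is ensuring separability of $f$ so that the ramification divisor formula applies and witnesses that $f \circ \pi_A$ is quasi-\'etale; this is built into the paper's convention that ``quasi-\'etale" refers to separable finite surjective morphisms with trivial ramification divisor (see \ref{nat}).
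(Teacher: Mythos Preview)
Your proposal is correct and follows essentially the same approach as the paper's proof: take the Albanese closure $\pi_A:A\to X$ from Lemma \ref{lem-ac}, observe that $f\circ\pi_A$ is quasi-\'etale, and then apply the universal property of Lemma \ref{lem-ac}(3) to produce the \'etale lift $f_A$. The paper's proof is simply more terse, omitting the explicit ramification-divisor computation you supply.
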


\begin{proof}
We denote by $f: X_1 = X \to X_2 = X$.
Let $\pi_i : A_i \to X_i$ be the Albanese closure.
Note that $f\circ\pi_1$ is quasi-\'etale. By Lemma \ref{lem-ac}, such $f_A$ exists.
\end{proof}

\begin{corollary}\label{cor-qab} Let $f:X\to X$ be a polarized separable endomorphism of a $Q$-abelian variety $X$. Then there is no $f^{-1}$-periodic proper subvariety of $X$.
\end{corollary}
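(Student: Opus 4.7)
The plan is to lift $f$ to an étale polarized endomorphism $f_A$ of an abelian cover $\pi_A\colon A\to X$ obtained from the Albanese closure of Lemma \ref{lem-ac}, and then rule out a hypothetical $f^{-1}$-periodic proper subvariety by comparing two independent computations of a restricted degree on $A$.

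First I would observe that $X$ is $\mathbb{Q}$-Gorenstein (it has at worst finite quotient singularities) and that $\pi_A^{\ast}K_X = K_A + R_{\pi_A} = 0$, since $\pi_A$ is quasi-\'etale and $A$ is abelian; hence $K_X\equiv 0$ and is in particular pseudo-effective. Lemma \ref{lem-pe-qe} then shows that $f$ is quasi-\'etale, and Corollary \ref{cor-lift} provides an \'etale endomorphism $f_A\colon A\to A$ with $\pi_A\circ f_A=f\circ\pi_A$. Pulling back an ample Cartier divisor $H$ with $f^{\ast}H\sim qH$, the divisor $\pi_A^{\ast}H$ is ample on $A$ and $f_A^{\ast}\pi_A^{\ast}H\sim q\pi_A^{\ast}H$, so $f_A$ is itself a $q$-polarized \'etale endomorphism.

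Now suppose for contradiction that $Z\subsetneq X$ is an irreducible subvariety with $f^{-s}(Z)=Z$ for some $s\ge 1$; replacing $f$ and $f_A$ by their $s$-th powers, I may assume $f^{-1}(Z)=Z$, and then $f(Z)=Z$ by surjectivity. Set $W:=\pi_A^{-1}(Z)$. Since $\pi_A$ is finite and surjective, $W$ is equidimensional of dimension $\dim Z<\dim X=\dim A$, so $W\subsetneq A$, and $f_A(W)=W=f_A^{-1}(W)$. As $f_A$ is \'etale (hence dimension-preserving under both images and preimages), $f_A$ permutes the finitely many irreducible components of $W$; after replacing $f_A$ (and $f$) by a further positive power, every irreducible component $C$ of $W$ satisfies $f_A(C)=C$, and the equality $f_A^{-1}(C)=C$ then also holds set-theoretically (hence scheme-theoretically, by \'etaleness of $f_A$), because $f_A^{-1}(C)$ is a union of components of $W$ mapped onto $C$ under the now-trivial induced permutation.

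Fixing such a component $C$, the first degree computation uses \'etaleness of $f_A$ and $f_A^{-1}(C)=C$ to give $\deg(f_A|_C)=\deg f_A=q^{\dim A}$; the second uses $(f_A|_C)^{\ast}(\pi_A^{\ast}H|_C)\sim q(\pi_A^{\ast}H|_C)$, yielding via the projection formula
\[
q^{\dim C}(\pi_A^{\ast}H|_C)^{\dim C} = \bigl((f_A|_C)^{\ast}(\pi_A^{\ast}H|_C)\bigr)^{\dim C} = \deg(f_A|_C)\cdot(\pi_A^{\ast}H|_C)^{\dim C},
\]
so $\deg(f_A|_C)=q^{\dim C}$. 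Together these force $q^{\dim A}=q^{\dim C}$, contradicting $q>1$ and $\dim C\le\dim Z<\dim A$. The main delicacy is the component-permutation argument of the third paragraph: one must pass to a power of $f_A$ that simultaneously fixes each top-dimensional component, and exploit \'etaleness to upgrade the set-theoretic identity $f_A^{-1}(C)=C$ to a scheme-theoretic one so that both degree computations legitimately apply to the same finite morphism.
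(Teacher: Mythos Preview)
Your argument is correct and follows the paper's strategy: both lift $f$ to an \'etale polarized endomorphism $f_A$ on the Albanese closure $A$ via Corollary~\ref{cor-lift}, and then exclude $f_A^{-1}$-periodic proper subvarieties of $A$. The paper simply cites \cite[Lemmas~3.2 and~3.3]{MZ} for this last step, whereas you supply a self-contained degree argument (comparing $\deg(f_A|_C)=\deg f_A=q^{\dim A}$ from \'etaleness and $f_A^{-1}(C)=C$, against $\deg(f_A|_C)=q^{\dim C}$ from the polarization). This is not a different route so much as an unpacking of the cited result; the content is essentially the same, with your version being more self-contained.

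One small remark: your parenthetical that $X$ ``has at worst finite quotient singularities'' is not quite the right justification in positive characteristic (the Galois group of $\pi_A$ may have order divisible by $p$, giving wild quotient singularities). But the conclusion you need---that $X$ is $\mathbb{Q}$-Gorenstein with $K_X\equiv 0$---follows directly from $\pi_A^*K_X=K_A\sim 0$ and pushing forward, so the argument is unaffected.
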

\begin{proof} Since $f$ is separable, $f$ is quasi-\'etale by the ramification divisor formula.
Let $\pi_A:A\to X$ be the Albanese closure and $f_A$ the lifting of $f$ which is \'etale by Corollary \ref{cor-lift}.
Note that $f_A$ is polarized by Theorem \ref{polar_des} and  Theorem \ref{thm-num-lin}.
By \cite[Lemmas 3.2 and 3.3]{MZ}, there is no $f_A^{-1}$-periodic proper subvariety of $A$.
So we are done.
\end{proof}

Let $\pi:X\to Y$ be a surjective morphism of normal varieties.
For any point $y\in Y$ (not necessarily a closed point),
denote by $X_y$ the scheme-theoretical fibre of $\pi$ over $y$.
Let $\eta$ be the generic point of $Y$.
Denote by $X_\eta$ the generic fibre and $X_{\overline{\eta}}:=X_\eta\times_{k(Y)} \overline{k(Y)}$ the geometric generic fibre where $k(Y)$ is the function field of $Y$ and $\overline{k(Y)}$ is the algebraic closure of $k(Y)$.

\begin{lemma}\label{lem-gen-fib-irr} Let $\pi:X\to Y$ be a surjective projective morphism of normal varieties such that $\pi_\ast\mathcal{O}_X=\mathcal{O}_Y$.
Then the general fibre $X_y$ is irreducible.
\end{lemma}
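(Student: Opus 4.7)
The plan is to analyze the generic fibre $X_\eta := X \times_Y \mathrm{Spec}\, k(Y)$ and transfer its properties to general closed fibres via constructibility/upper semicontinuity. The key point is not merely that $X_\eta$ is irreducible, but that it is geometrically irreducible over $k(Y)$; this extra strength is what is needed to avoid counterexamples coming from (generically) \'etale covers, where the topological generic fibre is a single point but general closed fibres are disconnected.

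First, $X_\eta$ is integral and normal, since it is obtained by localising the integral normal scheme $X$ at the generic point of $Y$; in particular its function field is $k(X)$. Next, flat base change along $\mathrm{Spec}\, k(Y) \to Y$ applied to the hypothesis $\pi_\ast \mathcal{O}_X = \mathcal{O}_Y$ yields $H^0(X_\eta, \mathcal{O}_{X_\eta}) = k(Y)$. On the other hand, since $X_\eta$ is integral, normal and proper over $k(Y)$, its ring of global sections is exactly the integral closure of $k(Y)$ in $k(X)$. Combining the two equalities, $k(Y)$ is algebraically closed in $k(X)$, and in particular separably algebraically closed; by the standard characterization (an irreducible $k$-variety $V$ is geometrically irreducible iff $k$ is separably algebraically closed in $k(V)$), $X_\eta$ is geometrically irreducible over $k(Y)$.

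Finally, I would invoke upper semicontinuity of the number of geometric irreducible components of fibres of a proper morphism (EGA IV.9.7.8): the set $\{\, y \in Y : X_{\bar y} \text{ has at least two irreducible components}\,\}$ is constructible and does not contain the generic point $\eta$, hence it is contained in a proper closed subset of $Y$. Its complement is therefore a dense open $U \subseteq Y$, and for every $y \in U$ the fibre $X_y$ is geometrically irreducible, a fortiori (topologically) irreducible. The substantive step, and the only place where the hypotheses are genuinely used, is the passage from topological to geometric irreducibility of $X_\eta$: it is precisely $\pi_\ast \mathcal{O}_X = \mathcal{O}_Y$ (through the identification of $H^0(X_\eta,\mathcal{O})$ with $k(Y)$) together with normality of $X$ (through the identification of $H^0(X_\eta,\mathcal{O})$ with the integral closure of $k(Y)$ in $k(X)$) that drives the argument in arbitrary characteristic.
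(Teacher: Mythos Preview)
Your proof is correct and follows essentially the same strategy as the paper's: both establish that the geometric generic fibre $X_{\overline{\eta}}$ is irreducible, using the hypothesis $\pi_\ast\mathcal{O}_X=\mathcal{O}_Y$ together with the normality of $X$, and then pass to general closed fibres.

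The packaging differs slightly. You identify $H^0(X_\eta,\mathcal{O}_{X_\eta})$ in two ways (via flat base change as $k(Y)$, and via normality and properness as the integral closure of $k(Y)$ in $k(X)$), conclude that $k(Y)$ is separably closed in $k(X)$, and then invoke the standard criterion for geometric irreducibility. The paper instead base-changes directly to the separable closure $k(Y)^{\sep}$, uses flat base change to see that $X_{\eta^s}$ is connected, uses that $X_{\eta^s}\to X_\eta$ is a limit of \'etale maps to see it remains normal (hence integral), and finishes with the observation that the further purely inseparable base change to $\overline{k(Y)}$ is a homeomorphism. Your argument is a bit more conceptual and makes the spreading-out step (EGA~IV.9.7.8) explicit, whereas the paper leaves that step implicit; conversely the paper's argument avoids appealing to the black-box criterion by exhibiting directly why normality and connectedness after separable base change give integrality. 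Either route is fine.
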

\begin{proof} Let $X_{\eta}$ be the generic fibre.
Let $k(Y)^{\sep}$ be the separable closure of $k(Y)$ in $\overline{k(Y)}$.
Denote by $X_{\eta^s}:=X_{\eta}\otimes_{k(Y)}k(Y)^{\sep}$.
Since $\eta^s:=\SSpec k(Y)^{\sep}\to Y$ is flat, $H^0(X_{\eta^s},\mathcal{O}_{X_{\eta^s}})=H^0(\eta^s,\mathcal{O}_{\eta^s})=k(Y)^{\sep}$ by \cite[Chapter III, Proposition 9.3]{Har}.
In particular, $X_{\eta^s}$ is connected.
Since $k(Y)^{\sep}/k(Y)$ is a limit of \'etale extensions, so is $X_{\eta^s}\to X_{\eta}$.
Therefore, $X_{\eta^s}$ is normal since $X_{\eta}$ is normal.
Hence, $X_{\eta^s}$ is connected and normal, so integral.
Since $\overline{k(Y)}/k(Y)^{\sep}$ is purely inseparable, $X_{\overline{\eta}}\to X_{\eta^s}$ is a homeomorphism.
Therefore, $X_{\overline{\eta}}$ is irreducible.
\end{proof}

\begin{lemma}\label{prop-irr} Let $\pi:X\to Y$ be a surjective projective morphism of normal varieties such that $\pi_\ast\mathcal{O}_X=\mathcal{O}_Y$.
Let $f:X\to X$ and $g:Y\to Y$ be two polarized separable endomorphisms such that $g\circ \pi=\pi\circ f$.
Suppose $Y$ is $Q$-abelian.
Then all the fibres of $\pi$ are irreducible.
\end{lemma}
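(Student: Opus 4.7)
The plan is: if some closed fibre $\pi^{-1}(y_0)$ is reducible, I would extract a proper closed irreducible subvariety $V \subsetneq Y$ with $g^{-s}(V) = V$ for some $s > 0$, contradicting Corollary \ref{cor-qab}, which applies since $g$ is polarized separable and $Y$ is $Q$-abelian. Set $B_0 := \{y \in Y : y \text{ closed point, } \pi^{-1}(y) \text{ reducible}\}$ and $B := \overline{B_0}$. Lemma \ref{sigma-lem} with $m = 0$ gives $g^{-1}(B_0) \subseteq B_0$, and since $g:Y\to Y$ is finite surjective onto the normal $Y$, Lemma \ref{open-lem} upgrades this to $g^{-1}(B) = \overline{g^{-1}(B_0)} \subseteq B$. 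To verify $B \subsetneq Y$, I would combine Lemma \ref{lem-gen-fib-irr} (the geometric generic fibre is integral) with generic flatness of $\pi$ and the openness of the ``geometrically integral fibre'' locus in a flat proper family (EGA IV, 9.7.7), producing a nonempty open $U \subseteq Y$ over which every fibre is geometrically integral.

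Next, the descending chain $B \supseteq g^{-1}(B) \supseteq g^{-2}(B) \supseteq \cdots$ of nonempty closed subsets (nonempty by surjectivity of $g$) stabilizes by Noetherianity to a proper closed $C \subsetneq Y$ satisfying $g^{-1}(C) = C$. I would then pick an irreducible component $V \subseteq C$ of minimal dimension. Since $g$ is finite surjective and $Y$ is normal, going-down forces every irreducible component of $g^{-1}(V)$ to dominate $V$ and to have dimension $\dim V$; a short maximality argument (no min-dim component of $C$ can be strictly contained in a higher-dimensional irreducible subset of $C$) shows these components are themselves min-dim components of $C$, and similarly that $g$ permutes the finite set of min-dim components of $C$. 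Replacing $g$ by a positive power fixing each such component set-theoretically, only $V$ itself maps onto $V$ under $g$, whence $g^{-1}(V) = V$, contradicting Corollary \ref{cor-qab}. Thus $B_0 = \emptyset$, so every closed fibre of $\pi$ is irreducible.

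The main obstacle is verifying $B \subsetneq Y$: because $\pi$ is not assumed flat, openness of the ``irreducible fibre'' locus is not automatic, and one must invoke generic flatness of $\pi$ together with the EGA IV openness theorem. A secondary delicate point is upgrading a $g$-invariant irreducible subvariety to a truly $g^{-1}$-invariant one; the going-down property for finite morphisms to normal varieties, combined with the minimality of $\dim V$ within $C$, is what makes this possible.
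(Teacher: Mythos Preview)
Your proof is correct and follows the paper's strategy: Lemma \ref{lem-gen-fib-irr} gives $\overline{\Sigma_0}\neq Y$, Lemmas \ref{sigma-lem} and \ref{open-lem} give $g^{-1}(\overline{\Sigma_0})\subseteq\overline{\Sigma_0}$, and Corollary \ref{cor-qab} forces $\Sigma_0=\emptyset$. Your Noetherian stabilization and minimum-dimension component extraction spell out carefully what the paper compresses into the single line ``since $\overline{\Sigma_0}$ is closed, $g^{-1}(\overline{\Sigma_0})=\overline{\Sigma_0}$'', and your appeal to generic flatness and EGA~IV~9.7.7 is already packaged into the \emph{statement} of Lemma \ref{lem-gen-fib-irr}, so you need not invoke it separately.
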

\begin{proof} By Lemma \ref{lem-gen-fib-irr}, the general fibre of $\pi$ is irreducible.
Then $\overline{\Sigma_0}\neq Y$ and $g^{-1}(\overline{\Sigma_0})=\overline{g^{-1}(\Sigma_0)}\subseteq\overline{\Sigma_0}$ by Lemmas \ref{open-lem} and \ref{sigma-lem} and the notation there.
Since $\overline{\Sigma_0}$ is closed, $g^{-1}(\overline{\Sigma_0})=\overline{\Sigma_0}$.
So $\Sigma_0=\emptyset$ by Corollary \ref{cor-qab}.
\end{proof}

\begin{lemma}\label{fibres-rc+irr}
Let $\pi:X\to Y$ be a surjective projective morphism of normal projective varieties such that $\pi_\ast\mathcal{O}_X=\mathcal{O}_Y$.
Let $f:X\to X$ and $g:Y\to Y$ be two polarized separable endomorphisms such that $g\circ\pi=\pi\circ f$.
Suppose that $Y$ is $Q$-abelian.
Then $\pi$ is equi-dimensional.
\end{lemma}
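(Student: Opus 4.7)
My approach is by contradiction. Set $d := \dim X - \dim Y$ and
$$\Sigma := \{y \in Y : \dim \pi^{-1}(y) > d\},$$
which is a closed proper subset of $Y$ by upper semi-continuity of fibre dimension; the goal is to show $\Sigma = \emptyset$. First I would establish $g^{-1}(\Sigma) = \Sigma$: Lemma \ref{prop-irr} gives that every fibre of $\pi$ is irreducible (hence connected), so Lemma \ref{surj-lem} yields the surjectivity of $f|_{\pi^{-1}(y)} : \pi^{-1}(y) \to \pi^{-1}(g(y))$, and since $f$ is finite this forces $\dim \pi^{-1}(y) = \dim \pi^{-1}(g(y))$, whence $y \in \Sigma \iff g(y) \in \Sigma$.

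Suppose for contradiction $\Sigma \neq \emptyset$. The next move is to lift to the Albanese closure. Because $Y$ is $Q$-abelian, $K_Y$ is $\mathbb{Q}$-torsion (in particular $\mathbb{Q}$-Cartier and pseudo-effective), and the ramification formula applied to the polarized separable $g$ forces $g$ to be quasi-\'etale (the argument of Lemma \ref{lem-pe-qe}). Corollary \ref{cor-lift} then produces a Galois quasi-\'etale cover $\pi_A : A \to Y$ with $A$ abelian together with an \'etale endomorphism $g_A : A \to A$ satisfying $\pi_A \circ g_A = g \circ \pi_A$; by Theorem \ref{polar_des} this $g_A$ is again polarized. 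Setting $\widetilde{\Sigma} := \pi_A^{-1}(\Sigma)$ gives a closed proper subset of $A$ with $g_A^{-1}(\widetilde{\Sigma}) = \widetilde{\Sigma}$.

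The crux is to extract from $\widetilde{\Sigma}$ a single $g_A^{-1}$-invariant proper subvariety of $A$. Pick a top-dimensional irreducible component $\widetilde{C}$ of $\widetilde{\Sigma}$. Since $g_A$ is finite surjective and preserves dimension, it permutes the (finitely many) top-dimensional components of $\widetilde{\Sigma}$; after replacing $f$ by a positive power I may arrange $g_A(\widetilde{C}) = \widetilde{C}$. The \'etaleness of $g_A$ is now decisive: flatness forces $g_A^{-1}(\widetilde{C})$ to be equi-dimensional of dimension $\dim \widetilde{C}$, so every one of its irreducible components is a top-dimensional component of $\widetilde{\Sigma}$ mapping surjectively onto $\widetilde{C}$; since $g_A$ fixes each top component, this component must coincide with $\widetilde{C}$ itself. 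Hence $g_A^{-1}(\widetilde{C}) = \widetilde{C}$, exhibiting $\widetilde{C} \subsetneq A$ as a $g_A^{-1}$-invariant proper subvariety of the (trivially $Q$-abelian) abelian variety $A$ under the polarized separable endomorphism $g_A$. This contradicts Corollary \ref{cor-qab}, so $\Sigma = \emptyset$ and $\pi$ is equi-dimensional.

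The most delicate point I foresee is verifying that $g$ itself is quasi-\'etale on $Y$, the hypothesis required for Corollary \ref{cor-lift} to apply. This rests on $K_Y$ being $\mathbb{Q}$-Cartier (indeed $\mathbb{Q}$-torsion) for a $Q$-abelian $Y$ together with the intersection-theoretic ramification argument of Lemma \ref{lem-pe-qe}, both of which are standard but require a small amount of care in positive characteristic. Once the \'etale lift $g_A$ is in hand, the flatness of \'etale morphisms makes the remainder of the argument transparent.
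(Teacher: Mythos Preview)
Your argument is correct, but it is more circuitous than the paper's. Once you have established $g^{-1}(\Sigma)=\Sigma$ with $\Sigma\subsetneq Y$ closed, the paper simply invokes Corollary~\ref{cor-qab} directly on $Y$: after replacing $g$ by a power so that it fixes every top-dimensional irreducible component of $\Sigma$, any such component is a $g^{-1}$-periodic proper subvariety of the $Q$-abelian $Y$, which is forbidden. Your detour through the Albanese closure $\pi_A:A\to Y$ and the \'etale lift $g_A$ is precisely what happens \emph{inside} the proof of Corollary~\ref{cor-qab}, so you are unpacking and reproving that corollary rather than using it. Both routes arrive at the same contradiction; the paper's is shorter because it keeps the lifting hidden in the black box.

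Two small remarks on your write-up. First, the ``decisive'' r\^ole you assign to \'etaleness is overstated: the fact that $g_A^{-1}(\widetilde{C})$ is pure of dimension $\dim\widetilde{C}$ follows already from $g_A$ being finite, so the same component argument works on $Y$ for $g$ without any lift. Second, you arrange only $g_A(\widetilde{C})=\widetilde{C}$ for one chosen component but then invoke that ``$g_A$ fixes each top component''; to make the deduction $g_A^{-1}(\widetilde{C})=\widetilde{C}$ clean you should pass to a power making the full permutation of top components trivial, not just fixing one. This is a trivial fix, but worth stating correctly.
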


\begin{proof}
By Lemma \ref{prop-irr}, all the fibres of $\pi$ are irreducible.
Let $d:=\dim(X)-\dim(Y)$ and
$\Sigma:=\{y\in Y\,|\, \dim(X_y)> d\}$.
Then $\pi$ is equi-dimensional outside $\Sigma$.
By Lemma \ref{sigma-lem}, $X_{g(y)}=f(X_y)$ set theoretically.
Since $f$ is finite surjective, $g^{-1}(\Sigma)= \Sigma$.
By Corollary \ref{cor-qab}, $\Sigma=\emptyset$.
\end{proof}

In the case of characteristic $0$, any dominant rational map from a normal projective variety with rational singularities to an abelian variety is a morphism.
In the case of positive characteristic, we have the following result which follows from \cite[Theorem 4.8]{GNT}.

\begin{lemma}\label{lem-alb} Let $X$ be a normal projective variety of dimension $\le 3$ over
the field $k$ with $char\, k>5$.
Suppose $(X,\Delta)$ is a klt pair for some effective Weil $\mathbb{Q}$-divisor $\Delta$.
Let $\pi:X\dashrightarrow A$ be a rational map to an abelian variety $A$.
Then $\pi$ is a morphism.
\end{lemma}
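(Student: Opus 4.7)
The plan is to reduce to the case where the source is smooth, apply a classical extension theorem to that case, and then descend along the birational modification by showing that the exceptional fibres must be contracted.

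First, I would take a log resolution $\mu:\widetilde X\to X$ of the pair $(X,\Delta)$; such a resolution exists because $\dim X\le 3$ (by Cossart--Piltant). Composing with $\pi$ yields a rational map $\widetilde\pi:=\pi\circ\mu:\widetilde X\dashrightarrow A$. Next, I would invoke the classical fact, valid in arbitrary characteristic, that a rational map from a smooth projective variety to an abelian variety is everywhere defined: indeed the indeterminacy locus has codimension $\ge 2$, and by successively blowing up smooth centres one reduces to the impossibility of a non-constant morphism from a rational curve to an abelian variety. Thus $\widetilde\pi$ is a morphism.

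The remaining task is to show that $\widetilde\pi$ factors through $\mu$, which, by the rigidity lemma, is equivalent to $\widetilde\pi$ being constant on every fibre of $\mu$. Since abelian varieties contain no rational curves, it suffices to know that each $\mu$-exceptional fibre is rationally chain connected. Here I would use that $(X,\Delta)$ is klt with $\dim X\le 3$ and $p>5$, so the MMP and vanishing theory in this range (as developed by Hacon--Witaszek, Birkar--Waldron, and summarised in \cite{GNT}) imply that klt three-fold singularities in characteristic $>5$ are rational and in particular their resolutions have rationally chain connected fibres over each point of the singular locus. Hence $\widetilde\pi$ contracts every fibre of $\mu$ and descends to a morphism $\pi:X\to A$, which necessarily agrees with the original rational map on its domain.

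The main obstacle is the last step, namely verifying that the exceptional fibres of a log resolution of a klt pair in dimension $\le 3$ and characteristic $>5$ are rationally chain connected (equivalently, that such singularities are rational in a sense strong enough to kill maps to abelian varieties). This is precisely where the hypotheses $p>5$ and $\dim X\le 3$ are used, and it is exactly the content of \cite[Theorem 4.8]{GNT}, which one may cite to conclude directly. Everything else in the argument (resolution in dimension three, extension of rational maps from smooth varieties to abelian varieties, and the rigidity lemma) is characteristic-free.
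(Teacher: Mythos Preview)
Your argument is correct and follows essentially the same route as the paper: resolve, use \cite[Theorem 4.8]{GNT} to see that the exceptional fibres are rationally chain connected, then descend via rigidity and the absence of rational curves on abelian varieties. The paper's version is slightly terser (it takes a resolution of the rational map directly and reduces to $\dim X=3$ by taking a product), but the substance is identical.
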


\begin{proof} Let $p:X'\to X$ be a resolution of $\pi$.
Taking a product, we may assume $\dim(X) = 3$.
By \cite[Theorem 4.8]{GNT}, all the fibres of $\pi$ are rationally chain connected and hence $p$ is a morphism
since an abelian variety contains no rational curves and by the rigidity (cf. \cite[Lemma 1.15]{De}).
\end{proof}

With the above lemma, we can show the following by using almost the same proof as \cite[Lemma 5.3]{MZ}.
We rewrite it here for the reader's convenience.

\begin{lemma}\label{mor-q-abelian} Let $X$ be a normal projective
variety of dimension $\le 3$ over the field $k$ with $char\, k>5$.
Suppose $(X,\Delta)$ is a klt pair for some effective Weil $\mathbb{Q}$-divisor $\Delta$.
Let $\pi:X\dashrightarrow Y$ be a dominant rational map where $Y$ is $Q$-abelian.
Suppose further that the normalization of the graph $\Gamma_{X/Y}$ is equi-dimensional over $Y$
(this holds when $k(Y)$ is algebraically closed in $k(X)$, $f: X \to X$ is polarized separable and
$f$ descends to some polarized separable $f_Y : Y \to Y$; see Lemma \ref{fibres-rc+irr}).
Then $\pi$ is a morphism.
\end{lemma}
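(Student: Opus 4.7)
The plan is to lift the rational map $\pi$ to a finite quasi-\'etale cover on which Lemma \ref{lem-alb} applies (since the Albanese closure of $Y$ is an abelian variety), and then descend. Let $\sigma : A \to Y$ be the Albanese closure of Lemma \ref{lem-ac}: a quasi-\'etale Galois cover of $Y$ by an abelian variety $A$, with Galois group $G$. Let $W$ be the normalization of the graph $\Gamma_{X/Y}$, with a birational projective morphism $p : W \to X$ and an equi-dimensional morphism $q : W \to Y$ (equi-dimensionality holds by hypothesis). Since $\sigma$ is \'etale outside a closed subset $T \subseteq Y$ of codimension at least $2$, the equi-dimensionality of $q$ forces $\dim q^{-1}(T) \le \dim T + (\dim W - \dim Y) \le \dim W - 2$, so $q^{-1}(T) \subset W$ has codimension at least $2$. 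Choose an irreducible component of $W \times_Y A$ that dominates $W$ and let $W'$ denote its normalization. Then $W' \to W$ is finite and \'etale outside $q^{-1}(T)$, hence quasi-\'etale, while the second projection provides a morphism $W' \to A$.

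Let $X'$ be the normalization of $X$ in $k(W')$; this is finite over $X$, and by the universal property of normalization the projective morphism $W' \to X$ factors through a birational projective morphism $W' \to X'$. I claim $X' \to X$ is quasi-\'etale. For any prime divisor $D \subset X'$, write $D_X \subset X$ for its image under the finite morphism $X' \to X$, and $D_{W'} \subset W'$ for the strict transform of $D$ under the birational morphism $W' \to X'$. The image $D_W \subset W$ of $D_{W'}$ under the finite morphism $W' \to W$ maps onto $D_X$ under $W \to X$; since $D_X$ is a divisor, $D_W$ is non-exceptional, so $W \to X$ is an isomorphism at the generic point of $D_W$, and similarly $W' \to X'$ is an isomorphism at the generic point of $D_{W'}$. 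Comparing ramification indices at the generic point of $D_{W'}$ along the two factorizations $W' \to W \to X$ and $W' \to X' \to X$, we conclude that the ramification index of $X' \to X$ at $D$ equals that of $W' \to W$ at $D_{W'}$, which is $1$ since $W' \to W$ is quasi-\'etale. Thus $X' \to X$ is unramified in codimension one.

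Since $X' \to X$ is quasi-\'etale and $(X, \Delta)$ is klt, Lemma \ref{lem-km-p}(1) produces a klt pair $(X', \Delta')$, where $\Delta'$ is the pullback of $\Delta$. The inclusion $k(A) \hookrightarrow k(W') = k(X')$ supplies a rational map $X' \dashrightarrow A$; since $\dim X' \le 3$ and $(X', \Delta')$ is klt, Lemma \ref{lem-alb} upgrades this to a morphism $X' \to A$. Setting $H := \Gal(k(X')/k(X)) \subseteq \Gal(k(A)/k(Y)) = G$ (the extension is Galois because $k(A)/k(Y)$ is), the morphism $X' \to A$ is $H$-equivariant; passing to the quotient via $X' \to X'/H = X$ (using normality of $X$) yields a morphism $X \to A/H$, and post-composing with the natural finite morphism $A/H \to A/G = Y$ gives a morphism $X \to Y$ that agrees with $\pi$ on a dense open subset. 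Hence $\pi$ extends to a morphism. The main technical obstacle is the ramification bookkeeping in the second paragraph: the argument crucially uses that $W \to X$ and $W' \to X'$ are birational (so unramified along non-exceptional divisors) and that $q$ is equi-dimensional (so quasi-\'etaleness survives the base change from $A \to Y$ up to $W' \to W$).
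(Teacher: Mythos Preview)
Your argument is correct and follows the same overall strategy as the paper: pass to the normalized graph $W$, base change along a quasi-\'etale abelian cover $A\to Y$ to get $W'$ and a finite quasi-\'etale $X'\to X$, observe $(X',\Delta')$ is klt, and apply Lemma~\ref{lem-alb} to turn $X'\dashrightarrow A$ into a morphism. Your verification that $X'\to X$ is quasi-\'etale (by comparing ramification indices along the two factorizations of $W'\to X$) is a bit more hands-on than the paper's, which simply notes that over the domain $U\subseteq X$ of $p_1^{-1}$ (whose complement has codimension $\ge 2$) the maps $W'\to W$ and $X'\to X$ coincide, but both arguments are fine.

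The one genuine difference is in the final descent. You exploit that the Albanese closure $A\to Y$ is Galois with group $G$, identify $H=\Gal(k(X')/k(X))\subseteq G$, check $H$-equivariance of the morphism $X'\to A$, and take quotients to produce $X=X'/H\to A/H\to A/G=Y$. The paper instead argues by contradiction using the rigidity lemma: if $\pi$ were undefined at some $x\in X$, the fibre $W_x\subset W$ would have positive-dimensional image in $Y$, hence its preimage in $W'$ would have positive-dimensional image in $X'$ (via the morphism $p_2'=\pi'\circ p_1'$), contradicting that this image lies in the finite set $\tau_3^{-1}(x)$. The paper's route does not need the cover to be Galois and is slightly quicker; your Galois-descent route is more constructive and makes the resulting morphism explicit. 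Both reach the same conclusion.
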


\begin{proof} Let $W$ be the normalization of the graph $\Gamma_{X/Y}$ and $p_1:W\to X$ and $p_2:W\to Y$ the two projections.
Let $\tau_1:A\to Y$ be a finite surjective morphism \'etale in codimension one with $A$ an abelian variety.
Let $W'$ be an irreducible component of the normalization of $W\times_Y A$ which dominates $W$ and $\tau_2:W'\to W$ and $p_2':W'\to A$ the two projections.
Taking the Stein factorization of the composition $W'\to W\to X$, we get a birational morphism $p_1':W'\to X'$ and a finite morphism $\tau_3:X'\to X$.
$$\xymatrix{
X'\ar[d]^{\tau_3}         & W'\ar[l]_{p_1'}\ar[d]^{\tau_2}\ar[r]^{p_2'} &A\ar[d]^{\tau_1}\\
X                & W\ar[l]_{p_1}\ar[r]^{p_2}     &Y}$$

Since $p_2$ is equi-dimensional, by the base change, $\tau_2$ is \'etale in codimension one.
Let $U\subseteq X$ be the domain of $p_1^{-1}:X\dashrightarrow W$.
Then, $\Codim(X-U) \ge 2$, and the restriction
$\tau_3^{-1}(U)\to U$ of $\tau_3$ is \'etale in codimension one, since so is $\tau_2$.
Therefore, $\tau_3$ is \'etale in codimension one. In particular, by the ramification divisor formula, $K_{X'}+\Delta'=\tau_3^\ast(K_X+\Delta)$ with $\Delta'=\tau_3^\ast \Delta$ an effective Weil $\Q$-divisor.
Since $(X,\Delta)$ is klt and
$\Delta' \ge 0$,
$(X',\Delta')$ is klt by Lemma \ref{lem-km-p}.
Clearly, $\pi':=p_2'\circ p_1'^{-1}:X'\dashrightarrow A$ is a dominant rational map, since $p_1'$ is birational and $p_2'$ is surjective.
Then $\pi'$ is a surjective morphism (with $p_2'=\pi'\circ p_1'$) by Lemma \ref{lem-alb}.
Suppose $\pi$ is not defined over some closed point $x\in X$. Then $\dim(W_x)>0$ with $W_x=p_1^{-1}(x)$ and $\dim(p_2(W_x))>0$ by the rigidity (cf. \cite[Lemma 1.15]{De}).
Hence, $\dim(p_2'(\tau_2^{-1}(W_x)))>0$ and then $\dim(p_1'(\tau_2^{-1}(W_x)))>0$.
However, $p_1'(\tau_2^{-1}(W_x))=\tau_3^{-1}(x)$ has only finitely many points.
This is a contradiction.
\end{proof}

\section{Global index-1 cover}

We follow \cite[Section 3.2]{ENS} or \cite[Definition 5.19]{KM} and consider the {\it global index-$1$ cover} of a normal projective variety in arbitrary characteristic $p \ge 0$.
Let $X$ be a normal projective variety and $D$ a $\Q$-Cartier integral divisor with $D\sim_{\Q}0$ ($\Q$-linear equivalence).
Define the {\it global index} $r$ of $D$ to be the minimal positive integer $r$
with $rD\sim 0$ (linear equivalence).
Pick a non-zero $u\in H^0(X,rD)$, one may define a cyclic covering
$$\pi \, : \, V: \, = \, V(D,r,u) \, = \, \SSpec \bigoplus_{i=0}^{r-1} \mathcal{O}_X(-iD)\, \to \, X .$$
By the construction, $V$ can be locally regarded as a hypersurface of $X\times \mathbb{A}^1$. So $V$ is $S_2$ (cf.~\cite[Proposition 5.3]{KM}).
Since $H^0(X,rD)\cong k$, for any two non-zero $u,v\in H^0(X,rD)$, $u=a^rv$ for some $a\in k$. In particular, $\pi$ is independent of the choice of $u$ up to isomorphism.
Let $f:X\to X$ be a surjective endomorphism such that $f^*D\sim D$.
Then $f$ lifts to an endomorphism on $V$;
see \cite[Lemma 3.2.5]{ENS} or proof of \cite[Proposition 3.5]{Na-Zh}.
Note that $\pi$ here is not separable if $p \, | \, r$.
If $D=K_X$, we call $\pi$ the {\it global index-$1$ cover} of $X$.

\begin{lemma}\label{lem-lift-e} Let $\pi:X\to Y$ be a purely inseparable finite surjective morphism between two normal varieties and let $f : X \to X$ and $g : Y \to Y$ be two surjective endomorphisms such that $g\circ \pi=\pi\circ f$.
Suppose $g$ is quasi-\'etale.
Then $f$ is quasi-\'etale.
\end{lemma}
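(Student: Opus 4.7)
The plan is to first establish that $f$ is separable, then to show that $f$ is \'etale in codimension one via a fibre product construction and Zariski's Main Theorem, and finally to conclude $R_f=0$ from the ramification divisor formula (Proposition \ref{prop-rdf}).

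Since $\pi$ is a finite morphism, $\pi^{*}:k(Y)\to k(X)$ is a finite field extension, and the relation $\pi\circ f=g\circ\pi$ translates on function fields into $f^{*}\circ\pi^{*}=\pi^{*}\circ g^{*}$. Lemma \ref{sepdeg}(1) then applies and gives that $f$ is separable if and only if $g$ is; since $g$ is quasi-\'etale (hence separable by definition), $f$ is separable.

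For the geometric step, I would introduce the fibre product $Z:=X\times_{Y}Y$, where the first factor maps to $Y$ via $\pi$ and the second via $g$, together with its projections $p_{X}:Z\to X$ and $p_{Y}:Z\to Y$. By base change, $p_{Y}$ is finite purely inseparable (hence a universal homeomorphism), while $p_{X}$ is \'etale in codimension one (since $g$ is, and \'etaleness is stable under base change). The commutativity of the given square yields a morphism $\sigma:X\to Z$, $x\mapsto(f(x),\pi(x))$, satisfying $p_{X}\circ\sigma=f$ and $p_{Y}\circ\sigma=\pi$; a degree count (using $\deg p_{Y}=\deg\pi$ by base change) gives $\deg\sigma=1$, so $\sigma$ is birational. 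Because $p_{Y}$ is a universal homeomorphism and $Y$ is irreducible, $Z$ is topologically irreducible; because $p_{X}$ is \'etale in codimension one, $Z$ is reduced and normal in codimension one. Passing to the normalization $\widetilde{Z}$ of $Z_{\mathrm{red}}$, one obtains a projective normal variety such that $\widetilde{Z}\to Z$ is an isomorphism in codimension one; since $X$ is normal, $\sigma$ lifts uniquely to a finite birational morphism $\widetilde{\sigma}:X\to\widetilde{Z}$, which is then an isomorphism by Zariski's Main Theorem.

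Composing, $f=(p_{X}\circ n)\circ\widetilde{\sigma}$, where $n:\widetilde{Z}\to Z$ is the normalization, and the map $p_{X}\circ n:\widetilde{Z}\to X$ is \'etale in codimension one. Hence $f$ has ramification index one along every prime divisor, so by Proposition \ref{prop-rdf} the ramification divisor $R_{f}$ vanishes, i.e., $f$ is quasi-\'etale. The main obstacle I anticipate is handling the a priori bad geometry of the fibre product $Z$, which need not be reduced or normal; but the combined effect of $p_{Y}$ being a universal homeomorphism (forcing topological irreducibility) and $p_{X}$ being \'etale in codimension one (forcing reducedness and normality in codimension one) is exactly what is needed to pass to the normalization and apply Zariski's Main Theorem.
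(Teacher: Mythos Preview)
Your proof is correct and follows essentially the same fibre-product strategy as the paper: both form $Z=X\times_Y Y$, produce the canonical map $\sigma:X\to Z$ from the square, show $\sigma$ is the normalization, and read off quasi-\'etaleness of $f$ from that of the base-changed projection $p_X$. The only real difference is in how you establish $\deg\sigma=1$: you argue via a direct degree count after observing that $p_Y$, being the base change of the purely inseparable $\pi$, is a universal homeomorphism (so $Z$ is irreducible), whereas the paper instead notes that $\sigma$ is simultaneously separable (since $p_X\circ\sigma=f$ is) and purely inseparable (since $p_Y\circ\sigma=\pi$ is), forcing $\deg\sigma=1$ immediately. The paper's route is slightly slicker in that it sidesteps any discussion of the global geometry of $Z$ by passing at once to an irreducible component, but your argument via universal homeomorphisms is equally valid and makes the structure of $Z$ more explicit.
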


\begin{proof} Let $W$ be the fibre product of $\pi$ and $g$. Let $W_1$ be an irreducible component of $W$ dominating $X$ and $Y$.
Denote by $p_1:W_1\to X$ and $p_2:W_1\to Y$ the two projections.
We may assume there is a finite surjective morphism $\tau:X\to W_1$ such that $p_1\circ \tau=f$ and $p_2\circ \tau=\pi$.
Since $g$ is quasi-\'etale, $g$ is separable and hence $f$ is separable (cf.~ Lemma \ref{sepdeg}).
Therefore, $\tau$ is separable.
Since $\pi$ is purely inseparable, $\tau$ is also purely inseparable.
So $\deg \tau=1$ and $\tau$ is the normalization of $W_1$.
Note that $p_1$ is also quasi-\'etale and $W_1$ is smooth in codimension $1$.
Then $f$ is quasi-\'etale.
\end{proof}

\begin{lemma}\label{lem-index} Let $f:X\to X$ be a $q$-polarized separable endomorphism of a normal projective variety with $K_X\sim_{\Q}0$.
Let $\pi:V\to X$ be the global index-$1$ cover of $X$.
Then $V$ is a normal projective variety with $K_V\sim 0$.
\end{lemma}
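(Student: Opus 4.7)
The idea is to unwind the explicit construction of the global index-$1$ cover and carry out an adjunction computation inside a line-bundle total space. Let $r$ be the global index of $K_X$, so that $rK_X\sim 0$; fix an isomorphism $\mathcal{O}_X(rK_X)\cong\mathcal{O}_X$ corresponding to a nowhere-vanishing section $u\in H^0(X,rK_X)$, and set $L:=\mathcal{O}_X(K_X)$. Then $V=\underline{\SSpec}_X\bigoplus_{i=0}^{r-1}L^{-i}$, with multiplication on $L^{-r}$ given by $u$; the excerpt already records that $V$ is $S_2$ via the local-hypersurface description.

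To establish normality I would check $R_1$. In a local trivialization of $L$ on an affine open $U\subseteq X$, the cover looks like $V|_U=\SSpec\mathcal{O}_U[e]/(e^r-g)$ with $g\in\mathcal{O}_U^\times$, because $u$ is nowhere vanishing. At any codimension-one point $v$ of $V$ whose image $x$ is a regular point of $X$, the ring $\mathcal{O}_{V,v}$ is finite flat over the DVR $\mathcal{O}_{X,x}$; a direct analysis of the factorization of $e^r-\bar g$ over $k(x)$ (using that $e$ is a unit since $g$ is) shows that $\mathcal{O}_{V,v}$ is itself a DVR. Since $\pi$ is finite, $\pi^{-1}(\Sing(X))$ has codimension at least two in $V$, so the singular locus of $V$ has codimension at least two. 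Combined with $S_2$, this yields normality.

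To compute $K_V$, I would realize $V$ as a Cartier divisor in the line-bundle total space $T:=\underline{\SSpec}_X\mathrm{Sym}(L^{-1})$ of $L$, cut out by the global section $F:=\theta^r-\pi_T^\ast u\in H^0(T,\pi_T^\ast L^r)$, where $\theta\in H^0(T,\pi_T^\ast L)$ is the tautological section. Since $\pi_T:T\to X$ is a line bundle with relative cotangent $\pi_T^\ast L^{-1}$, we have $\omega_T\cong\pi_T^\ast(\omega_X\otimes L^{-1})$, while $\mathcal{O}_T(V)\cong\pi_T^\ast L^r$. Adjunction then yields
$$\omega_V\,\cong\,\omega_T|_V\otimes\mathcal{O}_T(V)|_V\,\cong\,\pi^\ast(\omega_X\otimes L^{r-1})\,=\,\pi^\ast\mathcal{O}_X(rK_X)\,\cong\,\mathcal{O}_V,$$
using $rK_X\sim 0$. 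Hence $K_V\sim 0$; projectivity of $V$ is immediate since $\pi$ is finite and $X$ is projective.

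The main obstacle is the codimension-one regularity check when $p\mid r$, where the local polynomial $e^r-g$ is inseparable and could a priori factor as a perfect power and introduce nilpotents. This is ruled out by the minimality of the global index $r$, together with the separability of $f$ (whose lift to $V$ is available since $f$ is quasi-\'etale with $f^\ast K_X=K_X$ by Theorem \ref{thm-torsion}); the subsequent adjunction computation then goes through uniformly in all characteristics.
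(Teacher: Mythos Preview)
Your adjunction computation for $\omega_V\cong\mathcal O_V$ is essentially the content of \cite[Proposition~5.68]{KM}, which the paper also invokes, so that part is fine. The genuine gap is in your $R_1$ argument when $p\mid r$. Minimality of the global index does force $e^r-g$ to be irreducible over the \emph{function field} $k(X)$ (so $V$ is integral), but at a codimension-one point $x\in X_{\reg}$ one must look at $e^r-\bar g$ over the \emph{residue field} $k(x)$, and here nothing prevents $\bar g\in k(x)^p$. Concretely, if $r=p$, $\mathcal O_{X,x}$ is a DVR with uniformizer $s$, and $\bar g=\bar h^p$ with $v_s(g-h^p)\ge 2$, then $(e-h)^p=s\cdot(\text{non-unit})$ and the maximal ideal $(s,e-h)$ of $\mathcal O_{X,x}[e]/(e^p-g)$ is \emph{not} principal, so $\mathcal O_{V,v}$ is not a DVR. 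Your last paragraph names the right ingredients (the lift of $f$) but does not explain how they exclude this; ``minimality of $r$'' is simply not enough.

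The paper's proof is organized precisely around this obstruction. One first strips off the tame part by passing to the degree-$a$ cyclic cover $X_1\to X$ (with $r=ap^m$, $(a,p)=1$), which is quasi-\'etale and hence already normal; this reduces to $r=p^m$ with $\pi$ purely inseparable. Then, instead of a local regularity check, one takes the normalization $\tau:W\to V$ with conductor $C$; since $\omega_V\cong\mathcal O_V$ one gets $K_W+C\sim 0$. The endomorphism $f$ (quasi-\'etale by Lemma~\ref{lem-pe-qe}) lifts to $h:W\to W$, and because $W\to X$ is purely inseparable, Lemma~\ref{lem-lift-e} shows $h$ is quasi-\'etale; being $q$-polarized separable, Lemma~\ref{lem-pe-qe} then gives $K_W\equiv 0$. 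Hence $C\equiv 0$, so $C=0$ and $V=W$ is normal. This conductor argument is where the dynamical hypothesis is actually used; a purely local computation cannot replace it.
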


\begin{proof}  We may assume the characteristic $p>0$.
Let the global index of $K_X$ be $r=ap^m$ with $(a,p)=1$ and $m\ge 0$.
Since $K_X\sim_{\Q}0$, $f$ is quasi-\'etale by Lemma \ref{lem-pe-qe}.
So $f^*K_X\sim K_X$.
Let $D=p^mK_X$.
Then $f^*D\sim D$ and the global index of $D$ is $a$.
Let $\pi_1:X_1\to X$ be the cyclic cover induced by $D$.
Then $\pi_1$ is quasi-\'etale and hence $X_1$ is smooth in codimension $1$.
Since $X_1$ is $S_2$, $X_1$ is normal.
Note that $\pi$ factors through $\pi_1$.
So we may assume $r=p^m$ and $\pi$ is purely inseparable.

Let $\tau:W\to V$ be the normalization and $C$ the conductor of $\tau$ (cf.~\cite[arxiv version 1, Remark 5.2]{Na-Zh}).
Denote by $\eta:=\pi\circ \tau$.
Let $h:W\to W$ be the lifting of $f$ which is also $q$-polarized separable (cf.~Theorem \ref{polar_des} and  Lemma \ref{sepdeg}).
Note that $\omega_Y\cong \mathcal{O}_Y$ (cf.~\cite[Proposition 5.68]{KM}).
Then $\omega_W(C)=\tau^*\omega_V\cong \mathcal{O}_W$.
Since $\eta$ is purely inseparable and $f$ is quasi-\'etale, $h$ is still quasi-\'etale by Lemma \ref{lem-lift-e}.
Then $K_W\equiv 0$ by Lemma \ref{lem-pe-qe} and hence $C=0$ since $C$ is effective.
So $V$ itself, being $S_2$, is normal and $K_V\sim 0$ (cf.~\cite[Proposition 5.75]{KM}).
\end{proof}

\begin{lemma}\label{lem-index-tame} Let $f:X\to X$ be a $q$-polarized separable endomorphism of a normal projective variety with $K_X\sim_{\Q}0$.
Let $\pi:V\to X$ be the global index-$1$ cover of $X$ and $f_V:V\to V$ the lifting of $f$.
Suppose $\deg f^{\Gal}$ is co-prime to $p$.
Then so is $\deg f_V^{\Gal}$.
\end{lemma}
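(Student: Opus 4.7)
The plan is to factor the global index-1 cover $\pi:V\to X$ as a tame cyclic Galois cover followed by a purely inseparable cover, lift $f$ through each stage, and transfer control on Galois closure degrees along the way. Following the proof of Lemma \ref{lem-index}, I would write the global index $r$ of $K_X$ as $r=ap^m$ with $(a,p)=1$ and $m\ge 0$, and factor $\pi$ as
$$V\xrightarrow{\pi_2}X_1\xrightarrow{\pi_1}X,$$
where $\pi_1$ is the cyclic cover of degree $a$ induced by $D:=p^mK_X$ and $\pi_2:V\to X_1$ is the global index-1 cover of $X_1$ (purely inseparable of $p$-power degree, since $p^mK_{X_1}=\pi_1^\ast(p^mK_X)\sim 0$). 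Because $f$ is quasi-\'etale by Lemma \ref{lem-pe-qe}, $f^\ast K_X=K_X$ as Weil divisors, so $f$ lifts to $f_1:X_1\to X_1$ and then to $f_V:V\to V$; both lifts are $q$-polarized separable by Theorem \ref{polar_des} and Lemma \ref{sepdeg}.

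For the purely inseparable stage, I would apply Lemma \ref{lem-tame-des} to $\pi_2:V\to X_1$: since $k(V)/k(X_1)$ is a purely inseparable finite extension, $k(X_1)$ is separably closed in $k(V)$ and the ``further'' hypothesis of that lemma is satisfied, giving $\deg f_V^{\Gal}=\deg f_1^{\Gal}$. For the separable cyclic stage, $\pi_1$ is itself Galois of degree $a$, so $\deg\pi_1^{\Gal}=a$ is coprime to $p$; together with the hypothesis $(\deg f^{\Gal},p)=1$, Lemma \ref{lem-gal-gen} applied to the composition $f\circ\pi_1=\pi_1\circ f_1:X_1\to X$ yields that $\deg(\pi_1\circ f_1)^{\Gal}$ is coprime to $p$.

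The main obstacle is the last descent from $\deg(\pi_1\circ f_1)^{\Gal}$ to $\deg f_1^{\Gal}$, since Lemma \ref{lem-tame-des} cannot be applied directly to $\pi_1$ (which is separable of degree $a>1$ rather than purely inseparable). I would resolve this by a short field-theoretic comparison. Set $K:=k(X)$ and $L:=k(X_1)$, so $[L:K]=a$; the compatibility $f_1^\ast\circ\pi_1^\ast=\pi_1^\ast\circ f^\ast$ produces a tower
$$f^\ast(K)\subseteq f_1^\ast(L)\subseteq L\subseteq N,$$
where $N/f^\ast(K)$ is the Galois closure of $L/f^\ast(K)$, of degree $\deg(\pi_1\circ f_1)^{\Gal}$, and $[f_1^\ast(L):f^\ast(K)]=[L:K]=a$. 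Since $N/f_1^\ast(L)$ is again Galois and contains $L$, it contains the Galois closure $M$ of $L/f_1^\ast(L)$; hence $\deg f_1^{\Gal}=[M:f_1^\ast(L)]$ divides $[N:f_1^\ast(L)]=[N:f^\ast(K)]/a$, which is coprime to $p$. Combined with the first stage, $\deg f_V^{\Gal}$ is coprime to $p$, as required.
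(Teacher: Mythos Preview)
Your proof is correct and follows the same route as the paper: factor $\pi$ through the tame cyclic cover $\pi_1:X_1\to X$, use Lemma \ref{lem-gal-gen} on the composition $f\circ\pi_1=\pi_1\circ f_1$ to get $\deg(\pi_1\circ f_1)^{\Gal}$ coprime to $p$, and then invoke Lemma \ref{lem-tame-des} for the remaining purely inseparable piece $V\to X_1$. The only difference is that the paper simply writes ``In particular, $\deg f_{X_1}^{\Gal}$ is co-prime to $p$'' at the step you identify as the obstacle, whereas you spell out the easy field-theoretic reason---the Galois closure of $L/f_1^\ast(L)$ sits inside the Galois closure of $L/f^\ast(K)$, so $\deg f_1^{\Gal}$ divides $\deg(\pi_1\circ f_1)^{\Gal}$.
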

\begin{proof} Let $\pi_1:X_1\to X$ be the cyclic cover as in the proof of Lemma \ref{lem-index}.
Since our base field is algebraically closed, $\pi_1$ is Galois with degree co-prime to $p$.
Let $f_{X_1}:X_1\to X_1$ be the lifting of $f$.
Then $\deg (\pi_1\circ f_{X_1})^{\Gal}=\deg (f\circ\pi_1)^{\Gal}$ is co-prime to $p$ by Lemma \ref{lem-gal-gen}.
In particular, $\deg f_{X_1}^{\Gal}$ is co-prime to $p$.
So we may assume $\pi$ is purely inseparable.
Now the lemma follows from Lemma \ref{lem-tame-des}.
\end{proof}

\section{Surface case and the proof of Theorem \ref{thm-Q-surf}}

In this section, we consider the surjective endomorphism $f:X\to X$ of a normal algebraic surface over the field $k$ of characteristic $p>0$ with $\deg f^{\Gal}$ co-prime to $p$, so that \cite[Proposition 5.20]{KM} can be applied in positive characteristic (cf.~Lemma \ref{lem-km-p}).
Note that replacing $f$ by any positive power $f^s$, $\deg (f^s)^{\Gal}$ is still co-prime to $p$ (cf.~Lemma \ref{lem-gal-gen}).

Let $(X,\Delta)$ be a normal algebraic surface pair with $\Delta$ a Weil $\Q$-divisor on $X$.
Fix $u\in X$ a closed point.
Let $\pi:\widetilde{X}\to X$ be a resolution at $u$ with exceptional divisor $E=\sum E_i$.
Then $K_{\widetilde{X}}+\pi_*^{-1}\Delta\equiv_{\pi} \sum a_i E_i$, where $\pi_*^{-1}\Delta$ is the strict transform of $\Delta$ and $\equiv_\pi$ is the numerical equivalence over $X$.
If for any resolution at $u$, $a_i\ge -1$ (resp.~$a_i>-1$), then we say $(X,\Delta)$ is {\it numerically sub-lc} (resp.~{\it numerically sub-klt}) at $u$.
We say $(X,\Delta)$ is {\it numerically lc} if further $\Delta$ is effective.
It is known that $(X,\Delta)$ is numerically lc if and only if $(X,\Delta)$ is lc; see \cite[Proposition 6.3]{FT}.

Denote by $\Nlc(X)$ (resp.~$\Nklt(X)$) the finite subset of $X$ where $X$ is
non-numerically sub-lc (resp.~ non-numerically sub-klt).

\begin{lemma}\label{lem-nonklt}
Let $X$ be a normal algebraic surface over the field $k$ of characteristic $p>0$.
Let $f:X\to X$ be a surjective endomorphism of $X$ with $\deg f^{\Gal}$ co-prime to $p$.
Then:
\begin{itemize}
\item[(1)] Both $\Nlc(X)$ and $\Nklt(X)$ are  $f^{-1}$-invariant.
\item[(2)] $\Nklt(X)\cap \Supp R_f=\emptyset$.
\end{itemize}
\end{lemma}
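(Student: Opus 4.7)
The key input is Lemma~\ref{lem-km-p}(2): since $p\nmid\deg f^{\Gal}$, the endomorphism $f$ is tame, so its proof (with $\Delta=0$) yields the pointwise discrepancy formula
\[
a(E',X,-R_f)+1 \;=\; r\bigl(a(E,X,0)+1\bigr),
\]
where $E$ runs through exceptional divisors over $u\in X$, $E'$ is a corresponding exceptional divisor over $u'\in f^{-1}(u)$ in the normalized fibre product, and $r$ is the ramification index along $E'$. Since $R_f\ge 0$, one also has $a(E',X,0)\le a(E',X,-R_f)$. Observe that $\Nlc(X)$ and $\Nklt(X)$ are finite, being contained in $\Sing(X)$.

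For part~(1), I would first prove $f^{-1}(\Nklt(X))\subseteq\Nklt(X)$. Pick $u\in\Nklt(X)$ with a witness $E$ satisfying $a(E,X,0)\le -1$. Given any $u'\in f^{-1}(u)$, the divisor $g^{-1}(E)$ is finite surjective over $E$ of degree $\deg f$: at a generic $y\in E$, its fibre meets every preimage of $u$ under $f$, so $g^{-1}(E)$ has a component $E'$ lying over $u'$. The displayed formula then gives $a(E',X,-R_f)\le -1$, whence $a(E',X,0)\le -1$ and hence $u'\in\Nklt(X)$. Combining this inclusion with the estimate $|f^{-1}(\Nklt(X))|\ge|\Nklt(X)|$ (from surjectivity of $f$ and finiteness of $\Nklt(X)$) forces the equality $f^{-1}(\Nklt(X))=\Nklt(X)$. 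The identical argument reading ``sublc'' throughout yields $f^{-1}(\Nlc(X))=\Nlc(X)$.

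For part~(2), suppose to the contrary that some $u'\in\Nklt(X)\cap\Supp R_f$. The cardinality equality $|f^{-1}(\Nklt(X))|=|\Nklt(X)|$ established in part~(1) forces $|f^{-1}(u)|=1$ for every $u\in\Nklt(X)$; since $\deg f\ge 2$ (otherwise $R_f=0$ and (2) is vacuous), purity of the branch locus implies that the unique preimage of each such $u$ lies on $\Supp R_f$. Iterating backward produces a periodic cycle entirely inside $\Nklt(X)\cap\Supp R_f$, and after replacing $f$ by an appropriate power we may assume there is a fixed point $u_1\in\Nklt(X)\cap\Supp R_f$. Starting from a witness $E_0$ over $u_1$ and iterating the construction of part~(1), one produces a sequence $\{E_n\}$ of exceptional divisors over $u_1$, each with $a(E_n,X,0)\le -1$, related by the recursion
\[
a(E_n,X,0)+1 \;=\; r\bigl(a(E_{n-1},X,0)+1\bigr) \;-\; \mathrm{mult}_{E_n}\bigl(\pi^\ast R_f\bigr).
\]
Since $r\ge 2$ and the multiplicities are positive infinitely often (because $u_1\in\Supp R_f$), careful analysis of this recursion forces the log discrepancies $a(E_n,X,0)+1$ to diverge to $-\infty$, contradicting the lower bound coming from the minimal log discrepancy at the fixed normal surface singularity $u_1$. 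The final contradiction step---extracting the divergence from the recursion while keeping all $a(E_n,X,0)\le -1$---is the hardest part of the argument, and is where the interaction between the tame structure and the ramification divisor at a fixed point is decisive. Hence $\Nklt(X)\cap\Supp R_f=\emptyset$.
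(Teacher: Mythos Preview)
Your argument for part~(1) is essentially the paper's: use tameness and Lemma~\ref{lem-km-p}(2) to get $f^{-1}(\Nklt(X))\subseteq\Nklt(X)$, then conclude equality by finiteness.

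For part~(2), your strategy has a genuine gap. The crucial claim that the sequence $a(E_n,X,0)+1\to-\infty$ ``contradict[s] the lower bound coming from the minimal log discrepancy at the fixed normal surface singularity $u_1$'' is false in general: at this stage nothing is known about $u_1$ beyond $u_1\in\Nklt(X)$, and for a non-lc surface singularity the discrepancies of exceptional divisors are \emph{unbounded below} (e.g.\ repeated blow-ups at the node of the exceptional configuration give $a(F_n)=n(1+a_i)\to-\infty$ whenever some $a_i<-1$). You cannot invoke Theorem~\ref{thm-w} to assume $X$ is lc, since that theorem is proved using the present lemma. A minor secondary issue: your appeal to purity of the branch locus to force preimages of $\Nklt(X)$ onto $\Supp R_f$ is unjustified because $X$ is singular at those points; the reduction to a fixed point in $\Nklt(X)\cap\Supp R_f$ is more cleanly done via $\Supp R_{f^i}\supseteq\Supp R_f$.

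The paper sidesteps the unboundedness of discrepancies by tracking a different, \emph{bounded} invariant. Fixing the fixed point $u$, it considers effective divisors $D=\sum d_kD_k$ with all $D_k\ni u$ such that $(X,-D)$ is non-subklt at $u$, and sets $m_D=\sum d_k$. A short computation on the minimal resolution (using that the intersection matrix has bounded inverse denominators) shows $m:=\sup m_D<\infty$. Then, applying Lemma~\ref{lem-km-p} to $K_X-R_f-f^*D=f^*(K_X-D)$, one finds $B+f^*D$ again lies in this set, where $B$ is the part of $R_f$ through $u$; since $m_{B+f^*D}\ge m_B+m_D\ge 1+m_D$, choosing $m_D>m-1$ yields the contradiction. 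The point is that $m_D$ lives in a bounded range regardless of whether $u$ is lc, which is exactly what your discrepancy recursion lacks.
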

\begin{proof}
(1) Since $f$ is separable, by the ramification divisor formula, $K_X-R_f=f^*K_X$.
Let $u\in X$ and $v=f(u)$ such that $v\in \Nlc(X)$. By Lemma \ref{lem-km-p}, $(X, -R_f)$ is non-numerically sub-lc  at $u$ and hence so is $X$ since $R_f$ is effective.
Therefore, $f^{-1}(\Nlc(X))\subseteq \Nlc(X)$.
Since $\Nlc(X)$ is a finite set, $f^{-1}(\Nlc(X))= \Nlc(X)$.
The same proof works for $\Nklt(X)$.

(2) If $\Nklt(X)\cap \Supp R_f\neq \emptyset$, then $\Nklt(X)\cap \Supp R_{f^i}\neq \emptyset$ for any $i>0$.
So replacing $f$ by a positive power, we may assume $f^{-1}(u)=u$ for any $u\in \Nklt(X)$ by (1).
Suppose there exists $u\in \Nklt(X)\cap \Supp R_f$.
Let $D=\sum d_k D_k$ be an effective Weil $\Q$-divisor of $X$ where $D_k$ is a prime divisor containing $u$.
Denote by $m_D:=\sum d_k$.
Let $$S:=\{D=\sum d_k D_k\ge 0\,|\, (X,-D) \,\text{is non-numerically sub-klt at} \, u\,\text{and}\,u\in D_k, \forall k  \}.$$
Clearly $0\in S\neq \emptyset$.
Denote by $m:=\sup\{m_D\,|\, D\in S\}\ge 0.$
We claim that $m<\infty$.

Let $\pi: \widetilde{X}\to X$ be the minimal resolution of the point $u$ with $E=\sum_{i=1}^{n}E_i$ the exceptional divisor. Denote by $A$ the intersection matrix $(E_i\cdot E_j)_{i,j}$.
Let $K_{\widetilde{X}}\equiv_{\pi}\sum a_i E_i$.
For any prime divisor $P$ containing $u$, $\pi_*^{-1}P\equiv_\pi \sum b_i E_i$ with $b_i<0$.
Note that $\pi_*^{-1}P\cdot E_i\in \mathbb{Z}$. So $A(b_1,\cdots,b_n)^{T}\in \mathbb{Z}^n$ and hence $(b_1,\cdots,b_n)\in A^{-1}(\mathbb{Z}^n)=\frac{A^*}{\det(A)}(\mathbb{Z}^n)$.
Since all the entries of $A$ are integers, $db_i\in \mathbb{Z}$ where $d=|\det(A)|$.
Therefore, $b_i\le -\frac{1}{d}$.
Let $D=\sum_k d_k D_k$ be an effective Weil $\Q$-divisor of $X$ where $D_k$ is a prime divisor containing $u$ for each $k$.
Then $\pi_*^{-1}D=\sum_k d_k\pi_*^{-1}D_k\equiv_{\pi} \sum_i c_i E_i$ with $c_i\le -\sum_k \frac{d_k}{d}=-\frac{m_D}{d}$.
Suppose $m_D\ge -a_i d$ for each $i$.
Then  $K_{\widetilde{X}}-\pi_*^{-1}D\equiv_{\pi} \sum_i (a_i-c_i) E_i$.
Since $a_i-c_i\ge 0$ for each $i$, $(X,-D)$ is numerically sub-klt at $u$ and hence $D\not\in S$.
Therefore for any $D\in S$, $m_D<\max_i\{-a_i d\}$.

Let $D\in S$ such that $m_D>m-1$.
Since $K_X-R_f-f^*D=f^*(K_X-D)$ and by Lemma \ref{lem-km-p}, we have $(X,-R_f-f^*D)$ is non-numerically sub-klt at $u$.
Write $R_f=B+C$ where each prime divisor of $B$ contains $u$ and $C$ does not contain $u$.
Then $(X,-B-f^*D)$ is non-numerically sub-klt at $u$.
Note that each prime divisor of $f^*D$ still contains $u$.
Then $B+f^*D\in S$.
Since $B\neq 0$, $m\ge m_{B+f^*D}\ge m_B+m_D\ge 1+m_D>m$.
So we get a contradiction.
\end{proof}

If $f$ is further non-isomorphic, the proof of Wahl \cite[Theorem 2.8]{Wa} also works in any characteristic.
\begin{theorem}\label{thm-w} Let $X$ be a normal algebraic surface over the field $k$ of characteristic $p>0$.
Let $f:X\to X$ be a non-isomorphic surjective endomorphism of $X$ with $\deg f^{\Gal}$ co-prime to $p$.
Then $X$ is lc.
\end{theorem}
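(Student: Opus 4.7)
The plan is to follow Wahl's argument, whose ingredients translate to positive characteristic under our tameness hypothesis. I would proceed by contradiction, supposing $\Nlc(X)\neq\emptyset$. By Lemma~\ref{lem-nonklt}(1) and Lemma~\ref{lem-gal-gen}, after replacing $f$ by a suitable positive power---which preserves the tameness $p\nmid\deg f^{\Gal}$---every $u\in\Nlc(X)$ satisfies $f^{-1}(u)=\{u\}$. Fix such $u$, let $\pi:\widetilde{X}\to X$ be the minimal resolution at $u$ with exceptional divisor $E=\sum E_i$, and write $K_{\widetilde{X}}\equiv_\pi\sum a_iE_i$; by assumption some $a_{i_0}<-1$.

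The heart of the argument is a bounded-vs-unbounded discrepancy analysis mirroring Lemma~\ref{lem-nonklt}(2). Let $S:=\{D=\sum d_kD_k\ge 0 \mid (X,-D)\text{ is non-numerically sub-lc at }u\text{ and }u\in D_k\ \forall k\}$, and put $m_D:=\sum d_k$, $m:=\sup_{D\in S}m_D$. Running the same intersection-matrix computation as in Lemma~\ref{lem-nonklt}(2), but now with the lc threshold $c_i\le a_i+1$ in place of the klt threshold $c_i<a_i+1$, produces a finite upper bound $m \le d\cdot\max_i\{-(a_i+1):a_i<-1\}<\infty$, where $d=|\det(E_i\cdot E_j)|$. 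Independently, $K_X-R_f=f^*K_X$ together with Lemma~\ref{lem-km-p}(2)---available precisely because our tameness hypothesis $p\nmid\deg f^{\Gal}$ rules out wild ramification---shows that $(X,-R_f-f^*D)$ is non-numerically sub-lc at $u$ whenever $(X,-D)$ is. Writing $R_f=B+C$ with $B$ collecting the prime components of $R_f$ through $u$, we obtain $B+f^*D\in S$; if $B\ne 0$, then $m_{B+f^*D}\ge 1+m_D$, and iteration yields divisors in $S$ of arbitrarily large weight, contradicting finiteness of $m$.

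The main obstacle is the remaining case $B=0$, i.e., $f$ is \'etale over a neighborhood of $u$. Here one cannot feed the pullback engine above with any strictly positive increment, and a purely local argument on the minimal resolution is required. Wahl handles this via an analysis of the dual graph, showing that a non-isomorphic \'etale self-map of a normal surface germ fixing the point imposes self-similarity constraints on the exceptional cycle that are incompatible with the existence of a discrepancy $a_{i_0}<-1$ (essentially, the pullback must act as a scalar of modulus one on the exceptional $\Q$-vector space while simultaneously increasing some intersection numerics). The positive-characteristic inputs needed for this step are all in place under our hypotheses---separability of $f$ from $(p,\deg f^{\Gal})=1$, the ramification formula in Proposition~\ref{prop-rdf}, the tame discrepancy pullback from Lemma~\ref{lem-km-p}(2), and the equivalence of numerically lc and lc on surfaces from \cite[Proposition~6.3]{FT}---so Wahl's proof transfers without structural modification, completing the proof by contradiction.
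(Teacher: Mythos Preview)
Your preliminary reduction is sound but partly redundant: Lemma~\ref{lem-nonklt}(2) already shows $\Nklt(X)\cap\Supp R_f=\emptyset$, and since $\Nlc(X)\subseteq\Nklt(X)$ you land immediately in your ``main obstacle'' case $B=0$ at every non-lc point, without needing to rerun the bounded-vs-unbounded discrepancy argument for the lc threshold.

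The genuine gap is in that $B=0$ step. Your summary of Wahl's method---dual-graph self-similarity, ``scalar of modulus one on the exceptional $\Q$-vector space''---does not match the actual mechanism, and it is not clear how to make your sketch rigorous. What the paper (following Wahl) does is this. With $f$ \'etale on a punctured neighborhood $U\setminus\{x_0\}\to V\setminus\{x_0\}$, take a log resolution $\pi:\widetilde V\to V$ with reduced exceptional divisor $E$, form the normalized fibre product $U'$ and its minimal resolution $\widetilde U$, and write the relative Zariski decompositions $K_{\widetilde V}+E=P_{\widetilde V/V}+N_{\widetilde V/V}$ and $K_{\widetilde U}+\widetilde E+\widetilde F=P_{\widetilde U/U}+N_{\widetilde U/U}$. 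The tame ramification formula together with the fact that $(\widetilde V,E)$ is lc forces $P_{\widetilde U/U}=\widetilde f^{\,*}P_{\widetilde V/V}$, hence $P_{\widetilde U/U}^2=(\deg f)\,P_{\widetilde V/V}^2$. But $(U,x_0)$ and $(V,x_0)$ are the same germ, so $P_{\widetilde U/U}^2=P_{\widetilde V/V}^2$; since $\deg f>1$ this yields $P^2=0$, which is Wahl's criterion for log canonicity of the singularity. This Zariski-decomposition / characteristic-number computation is the missing idea in your proposal, and it is not recoverable from the heuristic you gave.
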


\begin{proof}
By \cite[Proposition 6.3]{FT}, we only need to show that $X$ is numerically lc.
Let $Z:=\Nklt(X)$. By Lemma \ref{lem-nonklt}, $Z$ is $f^{-1}$-invariant and $Z\cap \Supp R_f=\emptyset$.

Let $x_0\in Z$, $V=X\backslash((\Sing(X)\backslash\{x_0\})\cup f(\Supp R_f))$ and $U=f^{-1}(V)$. Replacing $f$ by a positive power, we may assume $f^{-1}(x_0)=x_0$. Then we have a quasi-\'etale morphism $f|_U:(U,x_0)\to (V,x_0)$ which is \'etale off $x_0$.
Let $\pi:\widetilde{V}\to V$ be the log resolution of $V$ and $E$ the reduced $\pi$-exceptional divisor. In particular, $(\widetilde{V},E)$ is lc.
Let $U'$ be the normalization of the fibre product $\widetilde{V}\times_V U$. Denote by $f':U'\to \widetilde{V}$ and $\pi':U'\to U$ the two projections.
Let $p:\widetilde{U}\to U'$ be the minimal resolution of $U'$ and $\widetilde{f}:\widetilde{U}\to \widetilde{V}$ and $\widetilde{\pi}:\widetilde{U}\to U$ the two induced projections.
Let $E'$ be the reduced $\pi'$-exceptional divisor, $\widetilde{E}$ the strict transform of $E'$ on $\widetilde{U}$, and $\widetilde{F}$ the reduced $p$-exceptional divisor.

Since $f'$ is \'etale off $E$, $f'^*(K_{\widetilde{V}}+E)=K_{U'}+E'$ by the (tame) ramification divisor formula. Note that $(U',E')$ is lc by Lemma \ref{lem-km-p}.
Then $\widetilde{f}^*(K_{\widetilde{V}}+E)=p^*(K_{U'}+E')=K_{\widetilde{U}}+\widetilde{E}+\sum a_i \widetilde{F}_i$ where $\widetilde{F_i}$ is a $p$-exceptional prime divisor and $a_i=0$ or $1$.
Taking the relative Zariski decomposition, we write $K_{\widetilde{U}}+\widetilde{E}+\widetilde{F}=P_{\widetilde{U}/U}+N_{\widetilde{U}/U}$ and $K_{\widetilde{V}}+E=P_{\widetilde{V}/V}+N_{\widetilde{V}/V}$.
Then $P_{\widetilde{U}/U}=\widetilde{f}^*P_{\widetilde{V}/V}$ and $N_{\widetilde{U}/U}=\widetilde{f}^*P_{\widetilde{V}/V}+\sum (1-a_i) \widetilde{F}_i$.
In particular, $P_{\widetilde{U}/U}^2=(\deg f) P_{\widetilde{V}/V}^2=(\deg f) P_{\widetilde{U}/U}^2$.
Since $\deg f>1$, $P_{\widetilde{U}/U}^2=0$ and hence $U$ is lc.
\end{proof}

Let $X$ be a variety (which is not necessarily smooth, compact, or irreducible) over the field $k$ of characteristic $p>0$ and let $H^i(X, \mathbb{Z}_{\ell})$ be the $\ell$-adic cohomology group of $X$.
Set $H^i(X,\mathbb{Q}_{\ell}) := H^i(X, \mathbb{Z}_{\ell})\otimes_{\mathbb{Z}_{\ell}}\mathbb{Q}_{\ell}$.
Denote by $b^i(X, \ell) := \dim_{\mathbb{Q}_{\ell}}H^i(X,\mathbb{Q}_{\ell})$ the $i$-th $\ell$-adic Betti number of $X$.
The $\ell$-adic Euler
characteristic is denoted
by $e(X, \ell) :=\sum_{i \ge 0} (-1)^i b^i(X, \ell)$.
Note that $e(X, \ell)$ is independent of the choice of $\ell$ prime to $p$.
So we simply denote by $e(X):=e(X, \ell)$.
We recall here two basic facts.
\begin{itemize}
\item[(1)] Let $Y$ be a closed subvariety of $X$ and $U=X\backslash Y$. Then $e(X)=e(U)+e(Y)$. (cf.~\cite{La}, \cite[Lemma 3.1]{Hu}).
\item[(2)] Let $f:X\to Y$ be an \'etale cover with $f^{\Gal}$ being tame. Then $e(X)=(\deg f) e(Y)$ (cf.~\cite[Proposition 3.12]{DL}).
\end{itemize}

Inspired by the proof of Nakayama \cite[Section 7.3]{ENS} in the case of characteristic $0$,
we extend it to characteristic $p > 5$.

\begin{lemma}\label{lem-nak} Let $f:X\to X$ be a quasi-\'etale non-isomorphic endomorphism of a normal projective surface $X$ over the field $k$ of characteristic $p>5$.
Suppose $\deg f^{\Gal}$ is co-prime to $p$ and $K_X\sim 0$.
Then $X$ is a $Q$-abelian surface.
\end{lemma}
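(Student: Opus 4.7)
My plan is to adapt the characteristic-zero argument of Nakayama \cite[Section 7.3]{ENS} to positive characteristic, using the just-proved Theorem \ref{thm-w} as the starting input and the F-regularity-based cover construction underlying Theorem \ref{thm-FQ} as the main technical step. First I would apply Theorem \ref{thm-w} to $f$ (which is non-isomorphic with $\deg f^{\Gal}$ coprime to $p$) to conclude that $X$ is lc. Next I would upgrade ``lc'' to ``klt''. By Lemma \ref{lem-nonklt} the set $\Nklt(X)$ is finite and $f^{-1}$-invariant, so after replacing $f$ by a positive power I may assume $f^{-1}(x) = x$ set-theoretically for every strictly lc point $x \in \Nklt(X)$. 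The coprimality $(\deg f^{\Gal}, p) = 1$ guarantees $f$ is tame at $x$, so the minimal log resolution at $x$ has exceptional divisor either a smooth elliptic curve (simple elliptic case) or a cycle of smooth rational curves (cusp case), exactly as in characteristic zero; I would then rule out the existence of a non-isomorphic quasi-\'etale self-cover preserving this configuration by analyzing the induced self-map of the local tame \'etale fundamental group.

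Once $X$ is klt, the hypothesis $p > 5$ combined with Hara's theorem \cite{Hara} yields that $X$ is strongly F-regular. Together with $K_X \sim 0$ this places us in the setting of Theorem \ref{thm-FQ}: although our $f$ is not assumed polarized, the proof of Theorem \ref{thm-FQ} uses polarization only through Lemma \ref{lem-pe-qe} to deduce that $f$ is quasi-\'etale and $K_X \equiv 0$, both of which we have by hypothesis. Thus the \cite[Main Theorem]{BCGST}-based construction in the proof of Theorem \ref{thm-FQ} produces a quasi-\'etale cover $V \to X$ with $V$ smooth, $c_1(V) = 0$ (equivalently $K_V \equiv 0$) and $c_2(V) = 0$. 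Since $V$ is a smooth projective surface of Kodaira dimension zero in characteristic $p > 3$ with $e(V) = c_2(V) = 0$, the Bombieri-Mumford classification \cite{BM} forces $V$ to be either an abelian surface or a hyperelliptic surface (K3 and Enriques are excluded since $e \neq 0$ in those cases). Both types are $Q$-abelian, so the quasi-\'etale morphism $V \to X$ exhibits $X$ as a $Q$-abelian surface.

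The hard part will be the upgrade from lc to klt. In characteristic zero Nakayama exploits specific features of the local tame \'etale fundamental groups of simple elliptic and cusp singularities (essentially $\widehat{\mathbb{Z}}^{\oplus 2}$ in the cusp case and a central extension involving the fundamental group of an elliptic curve in the simple elliptic case) to obstruct the existence of a non-isomorphic quasi-\'etale self-map fixing the singular point and respecting the exceptional configuration. In characteristic $p > 5$ with the tameness assumption $(\deg f^{\Gal}, p) = 1$, the dual graphs and the prime-to-$p$ local \'etale fundamental groups coincide with the characteristic-zero picture, so the same combinatorial/topological argument should persist; verifying this carefully in the tame setting is the technical heart of the proof.
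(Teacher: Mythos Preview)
Your overall architecture --- show $X$ is lc via Theorem~\ref{thm-w}, upgrade to klt, then invoke \cite{Hara} and (the proof of) Theorem~\ref{thm-FQ} --- matches the paper's, and your observation that the polarization hypothesis in Theorem~\ref{thm-FQ} is used only to secure quasi-\'etaleness and $K_X \equiv 0$ is correct.

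The gap is in the ``lc $\Rightarrow$ klt'' step, and your description of Nakayama's method is not accurate. The argument in \cite[Section~7.3]{ENS}, which the paper transports to characteristic $p>5$, is \emph{global}. One forms the tower of Galois closures $\theta_k : V_k \to X$ of $f^k$ together with the two finite Galois morphisms $g_k, h_k : V_{k+1} \to V_k$; since $X \setminus \Nklt(X)$ is strongly $F$-regular (\cite{Hara}, $p>5$) and $p \nmid \deg f^{\Gal}$, \cite{BCGST} forces $g_k$ and $h_k$ to be \'etale over $U_k := \theta_k^{-1}(X \setminus \Nklt(X))$ for $k \gg 1$, and comparing $\deg g_k \ne \deg h_k$ yields that $U_k$ is smooth with $\ell$-adic Euler number $e(U_k)=0$. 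On the minimal resolution $W \to V_k$ the reduced exceptional divisor $E$ satisfies $K_W \sim -E$ (Gorenstein strictly lc classification), so $e(W)=e(E)\le \rho(W)-1$; combined with $e(W) \ge 2 - 4\dim\Alb(W) + \rho(W)$ this forces a ruling $W \to T$ over an elliptic curve, and a short analysis shows $W$ is a $\mathbb{P}^1$-bundle with $E$ a disjoint union of elliptic sections --- contradicting the negativity of $E$.

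Your local route, by contrast, runs into the problem that the very groups you write down do \emph{not} obstruct self-covers: both $\widehat{\mathbb{Z}}^{\oplus 2}$ and the Heisenberg-type central extensions admit injective endomorphisms of arbitrary prime-to-$p$ finite index whose images are abstractly isomorphic to the whole group. Thus the local tame \'etale fundamental group alone cannot rule out a degree $>1$ quasi-\'etale self-map fixing a strictly lc point; indeed, for suitable simple elliptic germs (CM exceptional curve, appropriate normal bundle) such local self-covers genuinely exist. You would need a finer local invariant that is strictly monotone under quasi-\'etale covers, and none is supplied; the paper's global Euler-characteristic argument sidesteps this issue entirely.
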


\begin{proof}
Denote by $Z:=\Nklt(X)$. Then $Z$ is finite and $f^{-1}(Z)=Z$ by Lemma \ref{lem-nonklt}.
Let $\theta_k:V_k\to X$ be the Galois closure of $f^k:X\to X$ and let $\tau_k:V_k\to X$ be the induced finite Galois covering such that $\theta_k=f^k\circ \tau_k$.
By Lemma \ref{lem-gal-comp}, $\deg \theta_k$ and $\deg \tau_k$ are co-prime to $p$ for any $k$.
We claim that $U_k:=\theta_k^{-1}(X\backslash Z)$ is smooth and the Euler number $e(U_k)=0$ when $k\gg 1$.

There exist finite Galois morphisms $g_k, h_k: V_{k+1}\to V_k$ such that $\tau_k\circ g_k=\tau_{k+1}$ and $\tau_k\circ h_k=f\circ \tau_{k+1}$; see \cite[Lemma 3.3.1]{ENS} or \cite[Lemma 2.5]{Na-Zh}.
Then $\deg g_k$ and $\deg h_k$ are co-prime to $p$ for any $k$.
Since $X\backslash Z$ is strongly F-regular, thanks to the assumption that
$p > 5$ (cf.~\cite{Hara}), $g_k|_{U_{k+1}}$ and $h_k|_{U_{k+1}}$ are \'etale when $k\gg 1$ by \cite[Main Theorem]{BCGST}.
Assume from now on that $k\gg 1$.
Since $g_k$ and $h_k$ are Galois and by the purity of branch loci,
we have $g_k^{-1}(\Sing U_k)=h_k^{-1}(\Sing U_k)=\Sing U_{k+1}$.
Therefore, $\sharp \Sing U_{k+1}= (\deg g_k)\sharp \Sing U_k=(\deg h_k)\sharp \Sing U_k$. Since $\deg h_k>\deg g_k$, $\sharp \Sing U_k=0$.
Since $g_k$ and $h_k$ are tame, $e(U_{k+1})=(\deg g_k) e(U_k)=(\deg h_k) e(U_k)$ and hence $e(U_k)=0$.
So the claim is proved.

Next we claim that $Z$ is empty. Let $\delta:W\to V_k$ be the minimal resolution of $V_k$
for some $k\gg 1$.
Since $X$ is lc by Theorem \ref{thm-w} and $\theta_k$ is quasi-\'etale and tame, $V_k$ is also lc with
$\theta_k^{-1}(Z) = \Nklt(V_k)$ (cf. Lemma \ref{lem-km-p}).
Further, since $V_k$ is also Gorenstein,
$K_W=\delta^*K_{V_k} -E\sim -E$ and $E$ is equal to the full $\delta$-exceptional reduced divisor
$\delta^{-1}\theta_k^{-1}(Z)$;
precisely, a connected component of $E$ is either an elliptic curve or a nodal curve or a cycle of smooth rational curves by the classification of Gorenstein lc surfaces (cf.~\cite[Theorem 4.7]{KM}).
So $e(E)$ equals the number of rational curves contained in $E$.
Therefore, $e(E)\le \rho(W)-1$ where $\rho(W)$ is the Picard number of $W$.
Note that $e(W)=e(U_k)+e(E)=e(E)$ and $e(W)\ge 2-4\dim(\Alb(W))+\rho(W)$ where $\Alb(W)$ is the Albanese variety of $W$.
Thus $\dim(\Alb(W))\ge 1$.
So we have a ruling $\pi:W\to T$ to a smooth projective curve $T$ of genus $\ge 1$.
For a general fibre $F$ of $\pi$, we have $E\cdot F=-K_W\cdot F=2$, which implies that an irreducible component of $E$ dominates $T$.
Thus, $\dim(\Alb(W))= 1$ and $T$ is an elliptic curve, since each irreducible component of $E$ has genus $\le 1$.
If a connected component of $E$ is not an elliptic curve, then it is contained in a fibre of $\pi$;
however this is impossible,
since successfully blowing down $(-1)$-curves in fibres of $\pi$ will
reach a $\mathbb{P}^1$-bundle over $T$ whose fibres of course have no images of
nodal curves or cycles of rational curves.
Hence, $E$ is a disjoint union of elliptic curves and $e(E)=0$.
Since $\dim(\Alb(W))= 1$, $b_2(W)=2$ by the previous inequality, and therefore $W$ is a $\mathbb{P}^1$-bundle over $T$.
In particular, $E^2 = K_W^2=0$, a contradiction to the negativity of $E$.
So the claim is proved, i.e., $X$ is klt.

Hence $X$ is strongly F-regular by \cite{Hara}. Then $X$ is $Q$-abelian by Theorem \ref{thm-FQ}.
\end{proof}

\begin{proof}[Proof of Theorem \ref{thm-Q-surf}] By Theorem \ref{thm-w}, $X$ is ($\Q$-Gorenstein and) lc.
Now $K_X$ being pseudo effective implies that $K_X\sim_{\Q} 0$
by Lemma \ref{lem-pe-qe} and the Abundance Theorem for surfaces (cf.~\cite[Theorem 1.2]{Ta}).
Let $\pi:Y\to X$ be the global index-$1$ cover of $X$ and let $g:Y\to Y$ be the lifting.
Then $Y$ is normal and $K_Y\sim 0$ by Lemma \ref{lem-index}.
Note that $\deg g^{\Gal}$ is co-prime to $p$ by Lemma \ref{lem-index-tame}.
So $Y$ is further a $Q$-abelian surface by Lemma \ref{lem-nak}.
Denote by $Z:=\Nklt(X)$.
Then $Z$ is finite and $f^{-1}$-invariant by Lemma \ref{lem-nonklt}.
Thus $\pi^{-1}(Z)$ is $g^{-1}$-invariant and hence $\pi^{-1}(Z)=\emptyset$ by Corollary \ref{cor-qab}.
So $Z=\emptyset$ and $X$ is strongly F-regular by \cite{Hara}.
Then $X$ is $Q$-abelian by Theorem \ref{thm-FQ}.
\end{proof}

\begin{theorem}\label{thm-sur} Let $f:X\to X$ be a $q$-polarized separable endomorphism of a normal projective surface $X$ over the field $k$ of characteristic $p > 0$.
Suppose $X$ is either lc or $\Q$-factorial.
Then, replacing $f$ by a positive power, there exists an $f$-equivariant relative MMP over $Y$ $$X=X_1\to \cdots \to X_i \to \cdots \to X_r=Y$$ (i.e. $f=f_1$ descends to an endomorphism $f_i$ on each $X_i$), with every $X_i \to X_{i+1}$ a divisorial contraction or a Fano contraction, of a $K_{X_i}$-negative extremal ray, such that we have:
\begin{itemize}
\item[(1)] $K_Y\sim_{\Q} 0$ and $f_r$ is quasi-\'etale.
\item[(2)] For each $i$, $f_i$ is separable and $q$-polarized by some ample Cartier divisor $H_i$.
\item[(3)]
If $K_X$ is pseudo-effective, then $X=Y$.
\item[(4)]
If $K_X$ is not pseudo-effective, then $Y$ is either an elliptic curve or a point. In particular,  $f^\ast|_{\N^1(X)}$ is a scalar multiplication.
\item[(5)]
For each $i$, $X_i\to Y$ is equi-dimensional with every fibre irreducible.
\end{itemize}
Suppose further $p>5$ and $p\nmid \deg f^{\Gal}$.
Then:
\begin{itemize}
\item[(6)] $X_i$ is lc for each $i$ and $Y$ is $Q$-abelian.
\item[(7)]
If $\dim(Y)>0$, then $X_i$ is strongly F-regular (and hence klt) for each $i$.
\end{itemize}
\end{theorem}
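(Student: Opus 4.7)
The plan is to run the $f$-equivariant MMP on $X$ and propagate the desired properties from the end product $Y$ backward through the contractions. Since the MMP on a normal projective surface involves no flips (and surface MMP in positive characteristic is available for lc or $\mathbb{Q}$-factorial $X$), the coprimality hypothesis $(p,q)=1$ of Remark \ref{rmk-equ1} is not needed: by Remark \ref{rmk-div} together with \cite[Remark 6.3]{MZ}, both divisorial and Fano contractions of $K_{X_i}$-negative extremal rays are $f$-equivariant after replacing $f$ by a positive power. The descended $f_i$ on each $X_i$ is $q$-polarized by Theorems \ref{polar_des} and \ref{thm-num-lin} and separable by Lemma \ref{sepdeg}, which is precisely (2).

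Next I analyse $Y$. If $K_X$ is pseudo-effective, Lemma \ref{lem-pe-qe} forces $K_X \equiv 0$ and $f$ quasi-\'etale, so $X$ carries no $K_X$-negative extremal ray; hence $X=Y$ and the Abundance Theorem for surfaces (\cite[Theorem 1.2]{Ta}) gives $K_Y \sim_{\mathbb{Q}} 0$, establishing (3). If $K_X$ is not pseudo-effective, the MMP must terminate in a Fano contraction onto some $Y_0$ of lower dimension, which still carries a $q$-polarized separable endomorphism. If $\dim Y_0 = 1$, the existence of a non-isomorphic polarized endomorphism forces $Y_0$ to have arithmetic genus at most one, so $Y_0 \cong \mathbb{P}^1$ or is elliptic; in the $\mathbb{P}^1$ case I perform one further Fano contraction to a point and relabel the target as $Y$. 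Thus $Y$ is an elliptic curve or a point, giving the first half of (4) and $K_Y \sim_{\mathbb{Q}} 0$ trivially, completing (1).

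For the scalar-multiplication half of (4) and the fibre structure (5), I proceed by backward induction along the MMP. On $Y$, $f_r^*|_{\N^1(Y)}$ is automatically scalar: trivially when $Y$ is a point, and by $\rho(Y)=1$ together with $q$-polarizedness when $Y$ is elliptic. For each contraction $\pi_i \colon X_i \to X_{i+1}$, one has $\N^1(X_i) = \pi_i^* \N^1(X_{i+1}) \oplus \mathbb{R}[C_i]$, where $C_i$ is either the contracted exceptional divisor or a general fibre class; after replacing $f$ by a further positive power, $C_i$ is $f_i^{-1}$-invariant, so $f_i^* C_i \equiv c\, C_i$ with $|c|=q$ by Lemma \ref{lem-eigen}, and positivity of the effective pullback forces $c=q$. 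Hence $f_i^*|_{\N^1(X_i)}$ is again scalar. For (5), the cases $X=Y$ and $Y$ a point are immediate; when $Y$ is elliptic it is $Q$-abelian, and since each composite $X_i \to Y$ has connected fibres (MMP steps preserve $(\pi)_* \mathcal{O} = \mathcal{O}$), Lemmas \ref{prop-irr} and \ref{fibres-rc+irr} apply to deliver equi-dimensionality with irreducible fibres.

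Finally assume $p > 5$ and $p \nmid \deg f^{\Gal}$. Each birational MMP step preserves the function field, hence $\deg f_i^{\Gal}$, while each Fano contraction has connected fibres, so by Lemma \ref{lem-tame-des} the divisibility $\deg f_{i+1}^{\Gal} \mid \deg f_i^{\Gal}$ keeps every $\deg f_i^{\Gal}$ coprime to $p$; Theorem \ref{thm-w} then gives $X_i$ lc, while $Y$ is $Q$-abelian either by Theorem \ref{thm-Q-surf} in the pseudo-effective case or trivially otherwise, establishing (6). For (7) with $\dim Y > 0$: the pseudo-effective case is immediate since $X_i = Y$ is $Q$-abelian, hence klt, and Hara's theorem \cite{Hara} (for $p > 5$) upgrades klt to strongly F-regular. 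The subcase $\dim Y = 1$ is where I expect the main obstacle to lie: one must show $\Nklt(X_i) = \emptyset$. By Lemma \ref{lem-nonklt} this set is finite and $f_i^{-1}$-invariant, and its image in the elliptic $Y$ is $f_r^{-1}$-invariant (after replacing $f$ by a power, via Lemma \ref{lem-inv-des}). Since $f_r$ is separable of degree $q > 1$, it is an \'etale isogeny of the elliptic $Y$, for which no nonempty finite $f_r^{-1}$-invariant subset can exist (any such $T$ would satisfy $|T| = q|T|$). Hence $\Nklt(X_i) = \emptyset$, giving klt and then strongly F-regular by Hara's theorem.
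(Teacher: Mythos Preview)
Your argument is essentially the paper's own, and almost everything goes through as you wrote it. There is, however, one concrete slip in your backward induction for the scalar-multiplication part of (4). For a Fano contraction $\pi_i:X_i\to X_{i+1}$ from a surface to a curve, the general fibre $C_i$, viewed as a divisor on $X_i$, is precisely $\pi_i^*[\text{pt}]$, hence already lies in $\pi_i^*\N^1(X_{i+1})$. Thus the asserted decomposition $\N^1(X_i)=\pi_i^*\N^1(X_{i+1})\oplus\mathbb{R}[C_i]$ collapses to a single line, and the induction step only shows $f_i^*$ is scalar on $\pi_i^*\N^1(X_{i+1})$, not on all of $\N^1(X_i)$.

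The fix is immediate and is exactly what the paper does: take the complementary class to be the ample $H_i$ from (2) rather than the fibre. Since $H_i$ is ample it is positive on fibres, so $[H_i]\notin\pi_i^*\N^1(X_{i+1})$, and $f_i^*H_i\equiv qH_i$ is already known. With this correction your induction goes through, and the paper phrases the resulting statement globally as ``$\N^1(X)$ is spanned by the pullback of $\N^1(Y)$ and the classes of $H_i$, which are $f^*$-eigenvectors.''

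The remainder of your proof matches the paper: the same references for running the MMP equivariantly, for (1)--(3), (5), (6), and for (7) your cardinality argument on the elliptic $Y$ is a direct specialization of the paper's appeal to Corollary \ref{cor-qab} (no $f^{-1}$-periodic proper subvariety on a $Q$-abelian variety), combined with Lemma \ref{lem-inv-des} exactly as the paper does.
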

\begin{proof} By \cite[Theorem 1.1]{Ta} and Remark \ref{rmk-emmp}, we can run $f$-equivariant MMP.
Note that the MMP ends up with $Y$ with $K_Y$ being nef.
If $\dim(Y)<2$, then (1) is trivial.
If $\dim(Y)=2$, then (1) follows from Lemma \ref{lem-pe-qe} and the Abundance Theorem for surfaces (cf.~\cite[Theorem 1.2]{Ta}).
(2) follows from Theorem \ref{polar_des}, Remark \ref{rmk-num-lin} and  Lemma \ref{sepdeg}.
If $K_X$ is pseudo-effective, then (3) follows from Lemma \ref{lem-pe-qe}.
For (4), the first part follows from (1) and (2) while the second follows from
the fact that $\N^1(X)$ is spanned by the pullback of $\N^1(Y)$
and the classes of $H_i$ which are $f^*$ eigenvectors.
(5) follows from Lemmas \ref{prop-irr} and \ref{fibres-rc+irr}.

Suppose further $p > 5$, and $p\nmid \deg f_i^{\Gal}$ for $i = 1$ and hence for all $i$
by Lemma \ref{lem-tame-des}.
(6) then follows from Theorems \ref{thm-w} and \ref{thm-Q-surf}.
By Lemma \ref{lem-nonklt}, the non-klt locus $\Nklt(X_i)$ of $X_i$ is $(f_i)^{-1}$-invariant for each $i$.
If $\dim(Y)>0$, $\Nklt(X_i)=\emptyset$ by Lemma \ref{lem-inv-des} and Corollary \ref{cor-qab}.
So $X_i$ is strongly F-regular by \cite{Hara} and since $p>5$.
So (7) is proved.
\end{proof}

\section{Proofs of Theorems \ref{scalarthm}, \ref{thm-cases} and \ref{thm-smooth-rc}}\label{MMP}

We refer to \cite{BW} and \cite{HX} for the MMP of klt $3$-fold pairs with characteristic $p > 5$.

\begin{proof}[Proof of Theorem \ref{scalarthm}]
If $K_X$ is pseudo-effective, then $K_X\sim_{\Q} 0$ by  Theorem \ref{thm-torsion} and the theorem is then trivial.

Now we assume $K_X$ is not pseudo-effective.
Then there exists a Fano contraction $X_i\to X_{i+1}$ with $\dim(X_{i+1})\le 2$ for some $i$ (cf.~\cite[Theorem 1.7]{BW}).
By Remark \ref{rmk-emmp}, replacing $f$ by a positive power, $f$ descends to a $q$-polarized endomorphism $f_{i+1}:X_{i+1}\to X_{i+1}$.
Since $X$ is $\Q$-factorial, so is $X_{i+1}$ by the MMP theory.
By Theorem \ref{thm-sur}, we may further run $f$-equivariant MMP starting from $X_{i+1}$ and end up with $Y$ such that $K_Y\sim_{\Q} 0$.
Then (1) follows from Lemma \ref{lem-pe-qe}.
(3) follows from Remark \ref{rmk-emmp}.
For (4), we just reason as in the proof of Theorem \ref{thm-sur}.

Assume further that $p\nmid \deg f_i^{\Gal}$ for $i = 1$ and hence for all $i$
by Lemma \ref{lem-tame-des}.
Since $K_X$ is not pseudo-effective, $\dim(Y)\le 2$.
By Theorem \ref{thm-Q-surf}, $Y$ is $Q$-abelian.
If $\dim(X_i)\le 2$, then $X_i$ is lc and $X_i\dashrightarrow Y$ is an equi-dimensional morphism with every fibre irreducible by Theorem \ref{thm-sur}.
If $\dim(X_i)=3$, then $X_i\dashrightarrow Y$ is still an equi-dimensional morphism with every fibre irreducible by Lemmas \ref{mor-q-abelian}, \ref{prop-irr} and \ref{fibres-rc+irr}.
This shows (6).

Suppose $\tau:X_i \to Z_i$ is a flipping contraction.
Let $A$ be an effective ample $\Q$-Cartier divisor such that $K_{X_i}+A$ is $\tau$-trivial and $(X_i, A)$ is klt.
Note that $K_{X_i}+A\sim_{\Q} \tau^*D$ for some $\Q$-Cartier divisor $D$ of $Z_i$.
Let $B:=\tau_*A$.
Then $K_{Z_i}+B=\tau_*(K_{X_i}+A)\sim_{\Q} \tau_*\tau^*D=D$.
Hence $(Z_i, B)$ is also klt.
By Lemma \ref{mor-q-abelian}, $Z_i\dashrightarrow Y$ is a well defined morphism.
Thus the MMP is relative over $Y$.
So (5) is proved.

For (7), suppose $\pi_i:X_i \dashrightarrow X_{i+1}$ is birational for some $i$.
If $\pi_i$ is divisorial with $E$ the exceptional divisor, then $E$ is $f_i^{-1}$-periodic.
So the image of $E$ in $Y$ is also $f_r^{-1}$-periodic by Lemma \ref{lem-inv-des}.
By Corollary \ref{cor-qab}, $E$ and hence $\pi_i(E)$ dominate $Y$.
Thus $\dim(Y)\le \dim(\pi_i(E))\le \dim(X_i)-2$.
Similarly, if $\pi_i$ is a flip with $E$ the exceptional locus of the flipping contraction $\tau:X_i \to Z_i$, then $\dim(Y)\le \dim(\tau(E))\le \dim(X_i)-3$.
Thus we always have $\dim(Y)\le 1$.
In particular, $f_r^*$ is a scalar multiplication and hence $f^*$ is a scalar multiplication by (4).

(8) is true if $X \dashrightarrow X_i$ is birational by the MMP theory. Otherwise, apply Theorem \ref{thm-sur}.
\end{proof}

\begin{lemma}\label{lem-abe-ef} Let $\pi:X\to Y$ be an equi-dimensional morphism from a normal projective variety $X$ to a $Q$-abelian variety $Y$ such that $\pi$ has irreducible general fibres.
Suppose $\pi_1^{\acute{e}t}(X_{\reg})$ is finite.
Then $Y$ is a point.
\end{lemma}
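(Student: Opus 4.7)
The strategy is to assume $\dim Y \ge 1$ and construct, for each integer $n$ coprime to $\mathrm{char}\,k$, a connected finite \'etale cover of a suitable big open subset $W \subseteq X_{\reg}$ of degree at least $n^{2\dim Y}$; letting $n \to \infty$, this contradicts the finiteness of $\pi_1^{\acute{e}t}(X_{\reg})$ via Zariski--Nagata purity.

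First I would pass to the Albanese closure of $Y$: by Lemma~\ref{lem-ac} there is a Galois quasi-\'etale cover $\tau : A \to Y$ with $A$ an abelian variety of dimension $g := \dim Y$ and Galois group $G$. Let $U_Y \subseteq Y$ be the big open subset over which $\tau$ is \'etale (its complement has codimension $\ge 2$), and set $W := \pi^{-1}(U_Y) \cap X_{\reg}$. Equi-dimensionality of $\pi$ ensures that $\pi^{-1}(Y \setminus U_Y)$ has codimension $\ge 2$ in $X$, and $X \setminus X_{\reg}$ has codimension $\ge 2$ by normality, so $X_{\reg} \setminus W$ has codimension $\ge 2$ inside the regular scheme $X_{\reg}$. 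Zariski--Nagata purity (SGA~2, Exp.~X; valid for regular Noetherian schemes in arbitrary characteristic) then gives an isomorphism $\pi_1^{\acute{e}t}(W) \cong \pi_1^{\acute{e}t}(X_{\reg})$, so $\pi_1^{\acute{e}t}(W)$ is finite by hypothesis.

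For each $n$ coprime to $\mathrm{char}\,k$, the multiplication map $[n]_A : A \to A$ is \'etale Galois of degree $n^{2g}$, so $A_n := [n]_A^{-1}(\tau^{-1}(U_Y)) \to U_Y$ is a connected finite \'etale cover of degree $n^{2g}|G|$ (connectedness because the restriction of $[n]_A$ to the big open $\tau^{-1}(U_Y) \subseteq A$ remains connected: $\pi_1^{\acute{e}t}$ of that open surjects onto $\pi_1^{\acute{e}t}(A)$ by purity, hence onto $A[n]$). Form the base change $\widetilde W_n := W \times_{U_Y} A_n$; it is finite \'etale of the same degree over $W$. The key step, and the main obstacle, is to verify that $\widetilde W_n$ is connected: the hypothesis that $\pi$ has irreducible general fibres, over the algebraically closed $k$, translates into geometric irreducibility of the generic fibre of $\pi$, equivalently that $k(Y)$ is separably closed in $k(X)$; then for any finite separable extension $L/k(Y)$---in particular $L = k(A_n)$---the tensor product $k(X) \otimes_{k(Y)} L$ is a field, so $\widetilde W_n$ has irreducible generic fibre over $A_n$ and is therefore irreducible. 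Letting $n$ vary over integers coprime to $\mathrm{char}\,k$ produces connected \'etale covers of $W$ of unbounded degree, contradicting the finiteness of $\pi_1^{\acute{e}t}(W)$; hence $\dim Y = 0$.
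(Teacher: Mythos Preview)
Your argument is correct and follows essentially the same strategy as the paper: pull back the abelian cover of $Y$ to $X$ and use the multiplication-by-$n$ maps on $A$ to produce connected \'etale covers of unbounded degree, contradicting finiteness of $\pi_1^{\acute{e}t}(X_{\reg})$. The only difference is in packaging the codimension-$2$ issue: the paper first replaces $X$ by the normalization $X'$ of $X\times_Y A$ (which is quasi-\'etale over $X$ by equi-dimensionality, so $\pi_1^{\acute{e}t}(X'_{\reg})$ stays finite) and then observes that $[m]_A$ is \'etale on all of $A$, so the second base change is \'etale over all of $X'$; you instead restrict to the big open $W$ and invoke Zariski--Nagata purity explicitly. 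Both routes are equivalent, and your treatment of the connectedness of $\widetilde W_n$ via the separable-closure criterion for geometric irreducibility is the same point the paper uses (more tersely) when it says ``by the base change, $X'$ is irreducible''.
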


\begin{proof}
Let $A\to Y$ be the quasi-\'etale cover from an abelian variety $A$.
Let $X'$ be the normalization of $X\times_Y A$.
Since $\pi$ has irreducible general fibres, by the base change, $X'$ is irreducible.
Since $\pi$ is equi-dimensional, $X'\to X$ is also quasi-\'etale.
Hence $\pi_1^{\acute{e}t}(X'_{\reg})$ is also finite.
Suppose $\dim(A)>0$.
Let $m_A : A'=A \to A$ be the multiplication map with $m$ co-prime to $p = char\, k$
and $m > |\pi_1^{\acute{e}t}(X'_{\reg})|$.
By the base change, we have an \'etale cover $X''\to X'$ of degree $m$, a contradiction.
So $A=Y$ is a point.
\end{proof}

\begin{proof}[Proof of Theorem \ref{thm-cases}]
Let $\pi:X\to Y$ be the induced morphism and $g:=f_r:Y\to Y$ the induced $q$-polarized endomorphism as in Theorem \ref{scalarthm}.
If $\dim(X)<3$ or $Y$ is not a $Q$-abelian surface, then either Case (1) or (2) occurs by Theorem \ref{scalarthm}.
Suppose $Y$ is a $Q$-abelian surface and $\dim(X)=3$.
By Theorem \ref{scalarthm}, the MMP has only one step: a Fano contraction of $K_X$-negative extremal ray.
This is Case (3).

Suppose
$\pi_1^{\acute{e}t}(X_{\reg})$ is finite.
Since $\lim\limits_{s\to \infty}\deg f^s =\infty$, $f$ is not quasi-\'etale by the purity of branch loci.
So $K_X$ is not pseudo-effective by Lemma \ref{lem-pe-qe}.
By Theorem \ref{scalarthm}, $\pi$ is an equi-dimensional surjective morphism such that all the fibres are irreducible and $Y$ is $Q$-abelian.
So
the $Y$ in Theorem \ref{scalarthm} is a point by Lemma \ref{lem-abe-ef}.
\end{proof}

\begin{proof}[Proof of Theorem \ref{thm-smooth-rc}]
Since $\Alb(X)$ contains no rational curves and is generated by the image of the rationally chain connected $X$,
it is trivial. So (1) follows from Theorem \ref{thm-pe}.
It is known that the \'etale fundamental group of smooth and rationally chain connected projective variety is finite (cf.~\cite[Theorem 1.6]{Ko93}).
Hence (2) and  (3) follow from Theorem \ref{thm-cases}.
\end{proof}

\end{document}